\newcommand{\ncm}{\newcommand}
\newtheorem{theorem}{Theorem}[section]
\newtheorem{prop}[theorem]{Proposition}
\newtheorem{problem}[theorem]{Problem}
\newtheorem{lemma}[theorem]{Lemma}
\newtheorem{cor}[theorem]{Corollary}
\newtheorem{lem&def}[theorem]{Lemma \& Definition}
\newtheorem{definition}[theorem]{Definition}
\newtheorem{example}[theorem]{Example}
\newtheorem{remark}[theorem]{Remark}
\def\M{\mathcal{M}}
\def\C{\mathbb{C}\,} 
\def\Z{\mathbb{Z}\,} 
\def\N{\mathbb{N}\,}
\ncm{\Ann}{\mbox{\rm Ann}\,}
\ncm{\End}{\mbox{\rm End}\,}
\def\Hom{\mbox{\rm Hom}\,}
\def\Indec{\mbox{\rm Indec}\,}
\def\|{\, | \,}
\def\h{\, \sim \,}
\def\id{\mbox{\rm id}}
\def\into{\hookrightarrow}
\def\to{\rightarrow}
\def\Ind{\mbox{\rm Ind}}
\def\Coind{\mbox{\rm CoInd}}
\def\Res{\mbox{\rm Res}}
\def\o{\otimes}    
\def\bra{\langle}
\def\ket{\rangle}
\ncm{\rarr}[1]{\stackrel{#1}{\longrightarrow}}
\ncm{\larr}[1]{\stackrel{#1}{\longleftarrow}}
\def\cop{\Delta}
\def\eps{\varepsilon}
\def\-2{_{(-2)}}
\def\-1{_{(-1)}}
\def\0{_{(0)}}
\def\1{_{(1)}}
\def\2{_{(2)}}
\def\3{_{(3)}}
\def\n{_{(n)}}
\def\du1{\hat 1}
\def\lact{\triangleright}
\def\ract{\triangleleft}
\begin{document}
\title[Algebra  depth in tensor categories]{Algebra  depth in \\  tensor categories}
\author{Lars Kadison \\  \\ \\  \textit{In memory of Daniel Kastler}} 
\address{Departamento de Matematica \\ Faculdade de Ci\^encias da Universidade do Porto \\ 
Rua Campo Alegre, 687 \\ 4169-007 Porto, Portugal} 
\email{lkadison@fc.up.pt } 
\subjclass{16D20, 16D90, 16T05, 18D10, 20C05}  
\keywords{subgroup depth, Morita equivalent ring extensions,  Frobenius extension, semisimple extension,  tensor category, core Hopf ideals, relative Maschke theorem}
\date{} 

\begin{abstract}
 Study of the quotient module of a finite-dimensional Hopf subalgebra pair in order to compute its depth yields a relative Maschke Theorem, in which semisimple extension is characterized as being separable, and is therefore an ordinary Frobenius extension.  We study the core Hopf ideal of a Hopf subalgebra, noting that the length of the annihilator chain of tensor powers of the quotient module is linearly related to the depth, if the Hopf algebra is semisimple. A tensor categorical definition of depth is introduced, and a summary from this new point of view of previous results are included. It is shown in a last section that the depth, Bratteli diagram and relative cyclic homology of  algebra extensions are Morita invariants.  
\end{abstract} 
\maketitle

\section{Introduction and Preliminaries}

Sometimes it is useful to classify numbers with the same prime factors together.  Similarly, it is useful to
classify together finite-dimensional modules over a finite-dimensional algebra  with isomorphic indecomposable summands -  two such modules, which have the same indecomposables but perhaps with different nonzero multiplicities, are said to be similar.  Since an abelian category has direct sum
$\oplus$ that work as usual, similarity of two objects $X,Y$, denoted by $X \sim Y$, is defined by $X \oplus * \cong n \cdot Y$, i.e., ``$X$ divides a multiple of $Y$,''
and $Y \oplus * \cong m \cdot X$ (or briefly $Y \| m \cdot X$) for some multiplicities $m,n \in \N$.  In the presence of a uniqueness theorem for indecomposables that includes $X,Y$, they share isomorphic indecomposable summands. Also, the endomorphism rings of $X$ and $Y$ are Morita equivalent in a particularly transparent way \cite{AF, H}.  For example, one may introduce the theory of basic algebras without complications using the regular representation and a similar direct sum of projective indecomposables with constant multiplicity  one.  

A special type of abelian category is a tensor category,  which has a tensor product $\otimes$ satisfying the usual distributive, associative and unital laws up to natural isomorphism.  An algebra $A$ may then be defined in terms of
multiplication $A \otimes A \rightarrow A$ as usual. Define the minimum \textit{depth} of $A$ to be the least $2n+1 = 1,3,5,\ldots$ such that $A^{\otimes (n)} = A\otimes \cdots \otimes A$ ($n$ times $A$) is similar to $A^{\otimes (n+1)}$, which simplifies to 
$A^{\otimes (n+1)} \| q \cdot A^{\otimes (n)}$ for some $q \in \N$, since $A^{\otimes (n)} \|  A^{\otimes (n+1)}$ follows from applying the multiplication and unit.  This definition applied to an algebra $A$ in the category of bimodules over a ring $B$ with tensor $\otimes = \otimes_B$,  recovers the minimum odd depth of the ring extension
$B \rightarrow A$ \cite{BDK}, where it is applied to finite group algebra extensions to recover (together with minimum even depth) subgroup depth \cite{BKK}.  Interesting values of subgroup depth have been computed in \cite{BKK, BDK, D, F, FKR, HHP}, where subgroup depth less than $3$ are normal subgroups \cite{BK, BK2, KS, LKProc, LK2005}.  Several properties of subgroup depth extend to Hopf subalgebra (and left coideal subalgebra) pairs such as a characterization of normality \cite{BK} and unchanged minimum even depth when factoring out the subgroup core \cite{BDK, HKS}.

The main problem in the area is the one formulated in \cite[p.\ 259]{BDK} for a finite-dimensional Hopf subalgebra pair $R \subseteq H$, where $d(R,H)$ denotes the minimum depth.  

\begin{problem}
Is $d(R,H) < \infty$?
\end{problem}
There are examples in subfactor theory by Haagerup of infinite depth, although not anwering the problem.  
We bring up three other equivalent problems below.  

In the opposite tensor category, algebra becomes a notion of coalgebra with the same definition of depth.  In the tensor category of bimodules over $B$, a coalgebra in this sense is a $B$-coring. Applying the definition of depth to the Sweedler coring of a ring extension, one recovers the minimum h-depth of the ring extension as defined in \cite{K2012}. The minimum h-depth of a Hopf subalgebra pair $R \subseteq H$ is shown in \cite{K2014} to be precisely determined by the depth of their quotient module $Q_H = H/R^+H$  in the finite tensor category  of finite-dimensional $H$-modules \cite{EO}. 
In turn the depth of $Q$ is  determined precisely by the length of the descending chain of annihilator ideals of
the tensor powers of  $Q$,  if the Hopf algebra is semisimple, as proven in Theorem~\ref{th-precise}. The quotient module $Q$ has many uses, including the following equivalent reformulation of the problem above, either as an $H$-
or  $R$-module isoclass in the respective representation ring (see  \cite{K2014} or Section~3, the notion below is algebraic element in a ring). 

\begin{problem}
Is $Q$ an algebraic module?
\end{problem}
For example, a finite group algebra extension has quotient module $Q$ equal to a permutation module, which is algebraic \cite[Ch.\ 9]{F}.  The question in general is only interesting for the projective-free summands of $Q$, since projectives form a finite rank ideal in the representation ring \cite{HKY}.  If
either $R$ or $H$ has finite representation type (e.g., is semisimple, Nakayama serial), $Q$ is similarly algebraic.  Example~\ref{ex-eight} computes a finite depth where both Hopf algebras are of infinite representation type.  

In Section~4, we study depth of a non-normal subalgebra in a factorisable Hopf algebra in terms of entwined subalgebras such as a matched pair of Hopf algebras.   In Section~3, we prove a relative Maschke theorem characterizing semisimple extension of finite-dimensional Hopf algebras as a separable extension; as a corollary, these are
ordinary (or untwisted) Frobenius extensions.  We also define and study the core Hopf ideal of a Hopf subalgebra, which extends to Hopf algebras the usual notion of core of a subgroup pair of finite groups. We note that the length of the annihilator chain of tensor powers of the quotient module is linearly related to the depth if the Hopf algebra is semisimple, improving on some results in \cite{HKY}.   In Section~5, we make a categorical study of a Morita equivalence of noncommutative ring extensions. We show that depth and relative cyclic homology of a ring extension are Morita invariants, as is the inclusion matrix of a semisimple complex algebra extension.  

\subsection{Similar modules} Let $A$ be a ring.  Two left $A$-modules, ${}_AN$ and ${}_AM$, are said to be \textit{similar} (\cite{AF}, or H-equivalent \cite{H}) denoted
by ${}_AM \h {}_AN$ if two
conditions are met.  First, 
for some positive integer $r$,  $N$ is isomorphic to a direct summand in the direct sum of $r$ copies of $M$, denoted by  \hspace{1cm} 
${}_AN \oplus * \cong r \cdot {}_AM \  \  \  \Leftrightarrow \   $
\begin{equation}
\label{eq: oplus}
  N \| r \cdot M \  \Leftrightarrow \ \  \exists f_i \in \Hom ({}_AM,{}_AN),  \ g_i \in \Hom ({}_AN, {}_AM): \ \sum_{i=1}^r f_i \circ g_i = \id_N 
\end{equation}
 Second, symmetrically there is $s \in \Z_+$ such that $M \| s \cdot N$. (Say that $M$ and $N$ are \textit{dissimilar} if neither condition 
$M \| s \cdot N$ or $N \| r \cdot M$ holds.)
It is easy to extend this definition of  similarity to similarity of two objects in an abelian category, and to show that it is an equivalence relation.

\begin{example}  
\label{ex-artinian}
\begin{rm}
Suppose $A$ is an artinian ring,  with indecomposable $A$-modules $\{ P_{\alpha} | \alpha \in I  \}$ (representatives from each isomorphism class for some index set $I$).  By Krull-Schmidt finitely generated modules
$M_A$ and $N_A$ have a unique factorization into a direct sum of multiples of finitely many indecomposable module components.  Denote the indecomposable constituents
of $M_A$ by $\Indec(M) = \{ P_{\alpha} \| [P_{\alpha},M] \neq 0 \}$ where $[P_{\alpha},M]$ is the number of factors
in $M$ isomorphic to $P_{\alpha}$.  Note that $M \| q \cdot N$ for some positive $q$ if and only if  $\Indec(M) \subseteq \Indec(N)$. It follows that  $M \h N$ iff $\Indec(M) = \Indec(N)$.  

Suppose $A_A = n_1 P_1 \oplus \cdots \oplus n_rP_r$ is the decomposition of the regular module
into its projective indecomposables.  Let $P_A = P_1 \oplus \cdots \oplus P_r$.  Then $P_A$
and $A_A$ are similar (and call $P$ the \textit{basic} $A$-module in the similarity class of $A$).  Then $A$ and $\End P_A$ are
Morita equivalent.  The algebra $\End P_A$ is of course the basic algebra of $A$. 

Suppose $A$ is a semisimple ring.  Then $P_i = S_i$ are simple modules.  Note that the annihilator ideal $\Ann S_i$ is a maximal ideal in $A$; denote it
by $I_i$.   Note that $\Ann (n_i \cdot S_i) = I_i$, $\Ann ( n_i \cdot S_i \oplus n_j \cdot S_j) =  I_i \cap I_j$, and any ideal $I$ is uniquely $\Ann (S_{i_1} \oplus \cdots \oplus S_{i_s})$ for the $2^r$ integer subsets, $1 \leq i_1 < \cdots < i_s \leq r$.
\end{rm} 
 \end{example}

\begin{prop}
\label{prop-gem}
If two modules are similar, then their annihilator ideals are equal.  Conversely, if $A$ is a semisimple ring, two finitely generated modules with equal annihilator ideals are similar.
\end{prop}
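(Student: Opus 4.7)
The plan is to split the proof into the two stated implications, both of which follow from elementary annihilator bookkeeping together with the structure noted in Example~\ref{ex-artinian}.

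For the forward direction, I would first observe the two general identities: $\Ann(r \cdot M) = \Ann(M)$ for any positive integer $r$ (annihilating one copy is the same as annihilating finitely many), and $\Ann(X \oplus Y) = \Ann(X) \cap \Ann(Y)$, so in particular $\Ann(X \oplus *) \subseteq \Ann(X)$. Now suppose $M \sim N$, so there exist $r,s \in \Z_+$ with $M \| r \cdot N$ and $N \| s \cdot M$. Applying the two identities to $M \oplus * \cong r \cdot N$ gives $\Ann(N) = \Ann(r \cdot N) \subseteq \Ann(M)$, and symmetrically $\Ann(M) \subseteq \Ann(N)$. Hence $\Ann(M) = \Ann(N)$. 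This half makes no semisimplicity assumption.

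For the converse I would invoke semisimplicity. Since $A$ is semisimple, $A_A = n_1 S_1 \oplus \cdots \oplus n_r S_r$ with the $S_i$ pairwise non-isomorphic simples, and every finitely generated module $M$ decomposes as $M \cong \bigoplus_{i=1}^r [S_i,M] \cdot S_i$ (Krull--Schmidt). Writing $I_i = \Ann(S_i)$, the last paragraph of Example~\ref{ex-artinian} records that $\Ann(M) = \bigcap_{i \in J_M} I_i$ where $J_M := \{i : [S_i,M] \neq 0\}$, and that distinct subsets of $\{1,\ldots,r\}$ yield distinct ideals. Therefore $\Ann(M) = \Ann(N)$ forces $J_M = J_N$, i.e.\ $\Indec(M) = \Indec(N)$, and the first paragraph of Example~\ref{ex-artinian} then gives $M \sim N$.

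The only step where anything nontrivial enters is the uniqueness statement at the end of Example~\ref{ex-artinian}, namely that the map $J \mapsto \bigcap_{i \in J} I_i$ is injective on subsets of $\{1,\ldots,r\}$; but this is exactly the Chinese remainder / Artin--Wedderburn fact that the maximal ideals $I_1,\ldots,I_r$ of the semisimple ring $A$ are pairwise comaximal with $A \cong \prod_i A/I_i$, so I would simply cite it. I expect no real obstacle: the forward direction is formal, and the converse reduces immediately to counting which isotypic components are present, a datum that is encoded faithfully by the annihilator precisely because $A$ is semisimple.
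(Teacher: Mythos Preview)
Your proof is correct and follows essentially the same route as the paper's: the forward direction via the inclusion $\Ann(r\cdot N)\subseteq\Ann(M)$ when $M\mid r\cdot N$, and the converse by identifying the basic module $S_{i_1}\oplus\cdots\oplus S_{i_s}$ from the unique expression $\Ann M=I_{i_1}\cap\cdots\cap I_{i_s}$ recorded in Example~\ref{ex-artinian}. Your explicit mention of the injectivity of $J\mapsto\bigcap_{i\in J}I_i$ is a welcome clarification of a step the paper leaves implicit.
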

\begin{proof}
Given modules $M$ and $N$, if $M \into N$, then $\Ann N \subseteq \Ann M$.  It follows that $M \| r \cdot N$ implies that $\Ann N \subseteq \Ann M$.  Hence, $M \h N \Rightarrow \Ann M = \Ann N$. 

Suppose $A$ is a semisimple ring; we use the notation in the example.  
If $M$ and $N$ are finitely generated $A$-modules such that $\Ann M = \Ann N$ is the ideal $I$ in $A$, then $I = I_{i_1} \cap \cdots \cap I_{i_s}$ for some integers $1 \leq i_1 < \cdots < i_s \leq r$.  It follows that $S_{i_1} \oplus \cdots \oplus S_{i_s}$ is the basic module in the similarity class of both $M$
and $N$; in particular, $M \h N$.  
\end{proof}

\begin{example}
\begin{rm}
Suppose $R$ is an artinian ring that is not semisimple and with two additional indecomposable modules $I_1, I_2$ that are not projective and not isomorphic.  Then the modules $M = R \oplus I_1$
and $N = R \oplus I_2$ are both faithful generators,
but dissimilar by Krull-Schmidt.  This contradicts the converse of the proposition for more general rings.  
(Without dissimilarity, one additional nonprojective indecomposable would suffice.)
\end{rm}
\end{example}
 
\subsection{Subring depth} 
 Throughout this section, let $A$ be a unital associative ring and $B \subseteq A$ a subring where $1_B =  1_A$; more generally, it suffices to assume $B \to A$ is a unital ring homomorphism,  called a ring extension, although we suppress this option notationally. Note the natural bimodules ${}_BA_B$ obtained by restriction of the natural $A$-$A$-bimodule (briefly $A$-bimodule) $A$, also  to the natural bimodules ${}_BA_A$, ${}_AA_B$ or ${}_BA_B$, which
are referred to with no further ado.  
Let $A^{\otimes_B (n)}$ denote $A \otimes_B \cdots \otimes_B A$ ($n$ times $A$, $n \in \N$), where
$A^{\otimes_B 0} = B$. 
For $n \geq 1$, the $A^{\otimes_B (n)}$ has a natural $A$-bimodule  structure which restricts
to $B$-$A$-, $A$-$B$- and $B$-bimodule structures occuring in the next definition. 
Note that $A^{\otimes_B (n)} \| A^{\otimes_B (n+1)}$ automatically occurs in any case for $n \geq 2$, since $A \rightarrow A \otimes_B A$ given by $a \mapsto a \otimes_B 1$ is a split monomorphism. For $n = 1$ and $A$-bimodules, this is the separability condition on $A \supseteq B$; otherwise, $A \| A\otimes_B A$ as $A$-$B$- or $B$-$A$-bimodules (via the split epi $a \otimes_B a' \mapsto aa'$).   

\begin{definition}
\label{def-depth}
\begin{rm}
The subring $B \subseteq A$ has depth $2n+1 \geq 1$ if as $B$-bimodules $ A^{\otimes_B (n)} \h A^{\otimes_B (n+1)} $.  The subring  $B\subseteq A$ has left (respectively, right) depth $2n \geq 2$ if $ A^{\otimes_B (n)} \h A^{\otimes_B (n+1)} $ as $B$-$A$-bimodules (respectively, $A$-$B$-bimodules). Equivalently, $A \supseteq B$ has depth $2n+1 \geq 1$, or left depth $2n \geq 2$, if
\begin{equation}
\label{eq: dirsum}
A^{\otimes_B (n+1)} \oplus * \cong q \cdot A^{\otimes_B (n)}
\end{equation}
as $B$-$B$-bimodules, or $B$-$A$-bimodules, respectively.  Right depth $2n$ is defined similarly in terms of $A$-$B$-bimodules.   

It is clear that if $B \subseteq A$ has either left or right depth $2n$, it has depth $2n+1$ by restricting the similarity  condition to $B$-bimodules.   If $B \subseteq A$ has depth $2n+1$, it has depth $2n+2$ by
tensoring the similarity by $- \o_B A$ or $A \o_B -$. The \textit{minimum depth} is denoted
by $d(B,A)$; if $B \subseteq A$ has no finite depth, write $d(B,A) = \infty$.  We similarly define minimum odd depth $d_{odd}(B,A)$ and minimum even depth $d_{even}(B,A)$. 

A subring $B \subseteq A$ has \textit{h-depth} $2n-1$ if Eq.~(\ref{eq: dirsum}) is more strongly satisfied
as $A$-$A$-bimodules ($n = 1,2,3,\ldots$).  Note that $B$ has h-depth $2n-1$ in $A$ implies that it has h-depth $2n+1$ (also that it has depth $2n$).  Thus define the minimum h-depth $d_h(B,A)$ (and set this equal to $\infty$ if no such $n \in \N$ exists). Note that h-depth $1$ is the Azumaya-like condition of Hirata in \cite{H}.  The notion of h-depth is studied in \cite{K2012};  by elementary considerations the inequality 
$|d_h(B,A) - d(B,A)| \leq 2$ is satisfied if either the minimum depth or minimum h-depth is finite.
\end{rm}
\end{definition}


\section{Depth of algebras and coalgebras in tensor categories}

In this section, we define depth of algebras and coalgebras in tensor categories.  When applied to algebras and coalgebras in a bimodule tensor category, this definition recovers minimum odd depth defined
in \cite{BKK}  and h-depth defined in \cite{LK2012}.  In particular, a coalgebra in bimodule tensor category is a coring, with depth defined in \cite{HKS}.  An algebra or coalgebra in a finite tensor category is an $H$-module algebra or $H$-module coalgebra with depth defined in \cite{K2014}.  

\subsection{Tensor Category}
\label{subsec-tc}
By a \textit{tensor category} $(\M, \otimes, \mbox{\bf 1})$ we mean an abelian category
$\M$ with \textit{unit object} $\mbox{\bf 1} \in \mbox{\rm Ob}(\M)$ and \textit{tensor product}  $\otimes: \M \times \M \rightarrow \M$, an additive bifunctor (satisfying  distributive laws w.r.t.\ $\oplus$) with \textit{associativity constraint}, a natural isomorphism $$\alpha_{X,Y,Z}: (X \otimes Y) \otimes Z \stackrel{\sim}{\longrightarrow} X \otimes (Y \otimes Z), \ \ \ X,Y,Z \in \M $$
satisfying the pentagon axiom (a commutative pentagon with 4 arbitrary objects in a tensor product grouped together in different ways, see for example \cite[(2.3)]{N}), 
and \textit{unit constraints}, natural isomorphisms $\ell, r$ such that
$$ \ell_X: \mbox{\bf 1} \otimes X \stackrel{\sim}{\longrightarrow} X, r_X: X \otimes \mbox{\bf 1} \stackrel{\sim}{\longrightarrow} X, \ \ \ X \in \M $$
 satisfy the triangle axiom (a commutative triangle with the unit object between two other arbitrary objects in a tensor product  associated in two ways using $\alpha,\ell,r$, \cite[(2.4)]{N}).  The Coherence Theorem of MacLane states that every diagram constructed from associativity and unit constraints commutes.  (Here we are making no requirement of left and right duals satisfying rigidity axioms.)

A \textit{tensor functor} between  tensor categories $(\M, \otimes, \mbox{\bf 1})$ and
$(\M', \otimes', \mbox{\bf 1}')$ is a functor $F: \M \rightarrow \M'$ such that for every $X,Y \in \mbox{\rm Ob}(\M)$, there are isomorphisms
$J_{X,Y}: F(X) \otimes' F(Y) \stackrel{\sim}{\longrightarrow} F(X \otimes Y)$ defining  a natural isomorphism, and $\phi: \mbox{\bf 1}' \stackrel{\sim}{\longrightarrow} F(\mbox{\bf 1})$ is an isomorphism satisfying  a commutative hexagon and two commutative rectangles, see for example
\cite[(2.12),(2.13),(2.14)]{N}. If $F$ is an equivalence of categories, the tensor categories $\M, \M'$ are
said to be \textit{tensor equivalent}.  

\begin{example}
\begin{rm}
Let $R$ be a ring, and ${}_R\M_R$ denote the category of $R$-$R$-bimodules and their bimodule homorphisms (denoted by $\Hom_{R-R}(X,Y)$ or $\Hom ({}_RX_R, {}_RY_R)$). Note that ${}_R\M_R$ has a tensor product $\otimes_R$ and unit object ${}_RR_R$, the natural bimodule structure on $R$ itself.  For example, $\ell_X : R \otimes_R X \stackrel{\sim}{\longrightarrow} X$ is the well-known
natural isomorphism.  This makes $({}_R\M_R, \otimes_R, {}_RR_R)$ into a tensor category.  
\end{rm}
\end{example}
Let $A,R$ are rings, $\M_A$, $\M_R$ their categories of right modules and homomorphisms. Recall that $A$ and $R$ are Morita equivalent rings if $R \cong \End P_A$ for some progenerator $A$-module $P$, if and only if
the categories $\M_R$ and $\M_A$ are equivalent, via the additive functor $-\otimes_R P$. The inverse bimodule of $P$ is denoted without ambiguity by $P^* \cong \Hom (P_A, A_A)$, since
$\Hom (P_A, A_A) \cong \Hom ({}_RP, {}_RR)$ as $A$-$R$-bimodules (by a theorem of Morita \cite{M}).  
\begin{lemma}
\label{lem-tys}
Suppose $T: \M_R \stackrel{\sim}{\longrightarrow} \M_A$ is an equivalence of  categories given
by $T(X) = X \otimes_R P_A$.  Then the categories ${}_R\M_R$ and ${}_A\M_A$ are tensor equivalent via $F({}_RY_R) = P^* \otimes_R Y \otimes_R P$.  
\end{lemma}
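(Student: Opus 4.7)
My plan is to build both the quasi-inverse of $F$ and the tensor coherence isomorphisms from the Morita context attached to the progenerator $P_A$: the bimodule isomorphisms $\tau: P \o_A P^* \stackrel{\sim}{\to} R$ of $R$-bimodules and $\tau': P^* \o_R P \stackrel{\sim}{\to} A$ of $A$-bimodules, satisfying the two mixed associativity identities $\tau(p \o q)\cdot p' = p \cdot \tau'(q \o p')$ in $P$ and $q \cdot \tau(p \o q') = \tau'(q \o p)\cdot q'$ in $P^*$.

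First I would exhibit a quasi-inverse $G: {}_A\M_A \to {}_R\M_R$ defined by $G({}_AZ_A) = P \o_A Z \o_A P^*$. Reorganizing via the natural isomorphisms between $\o_R$ and $\o_A$ and applying $\tau'$ twice on the outside,
\begin{equation*}
FG(Z) \, \cong \, (P^* \o_R P) \o_A Z \o_A (P^* \o_R P) \stackrel{\tau' \o \id \o \tau'}{\longrightarrow} A \o_A Z \o_A A \stackrel{\sim}{\to} Z,
\end{equation*}
and symmetrically $GF(Y) \stackrel{\sim}{\to} Y$ using $\tau$ twice. This establishes that $F$ is a categorical equivalence.

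Next I equip $F$ with a tensor structure. The natural isomorphism $J_{Y,Y'}: F(Y) \o_A F(Y') \stackrel{\sim}{\to} F(Y \o_R Y')$ is the composite
\begin{equation*}
(P^* \o_R Y \o_R P) \o_A (P^* \o_R Y' \o_R P) \stackrel{\id \o \tau \o \id}{\longrightarrow} P^* \o_R Y \o_R R \o_R Y' \o_R P \stackrel{\sim}{\to} P^* \o_R (Y \o_R Y') \o_R P,
\end{equation*}
where the first arrow uses the associativity constraints implicitly to isolate $P \o_A P^*$ and applies $\tau$, and the second uses the unit constraints of $\o_R$ to absorb the middle $R$. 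The unit comparison $\phi: A \stackrel{\sim}{\to} F(R)$ is $(\tau')^{-1}$ followed by the inverse unit constraint $P^* \o_R P \stackrel{\sim}{\to} P^* \o_R R \o_R P$.

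The main obstacle will be verifying coherence: the hexagon relating $J$ to the associativity constraints of $\o_R$ and $\o_A$, and the two triangles comparing $J$ and $\phi$ to the unit constraints. After stripping the purely associativity/unit parts of each diagram -- which commute by MacLane's Coherence Theorem applied internally in $({}_R\M_R, \o_R, R)$ and $({}_A\M_A, \o_A, A)$ -- what remains reduces to one of the two mixed-associativity identities of the Morita context recorded above. A direct diagram chase then completes the verification that $F: {}_R\M_R \to {}_A\M_A$ is a tensor equivalence.
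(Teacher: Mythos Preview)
Your proposal is correct and follows essentially the same approach as the paper: the paper's proof also builds the quasi-inverse $G(Z)=P\otimes_A Z\otimes_A P^*$ and obtains the tensor structure from the Morita isomorphism $R\cong P\otimes_A P^*$ (it writes the chain $F(X\otimes_R Y)\cong P^*\otimes_R X\otimes_R P\otimes_A P^*\otimes_R Y\otimes_R P\cong F(X)\otimes_A F(Y)$, the inverse of your $J$). The only difference is one of detail: the paper treats the coherence hexagon and unit triangles as routine and omits them, whereas you sketch how the mixed-associativity identities of the Morita context discharge these verifications.
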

\begin{proof} The proof follows from $F(X \otimes_R Y)  = P^* \otimes_R X \otimes_R Y \otimes_R P \cong $ $$ P^* \otimes_R X \otimes_R R \otimes_R Y \otimes_R P\cong P^* \otimes_R X \otimes_R P \otimes_A P^* \otimes_R Y \otimes_R P \cong 
F(X) \otimes_A F(Y).$$   Also $F({}_RR_R) \cong {}_AA_A$.  The functor $F$ is an equivalence with inverse functor $F^{-1}({}_AZ_A) = P \otimes_A Z \otimes_A P^*$.  
\end{proof}

In a tensor category $(\M, \otimes, 1_{\M})$, one says $(B,m,u)$ is an \textit{algebra in $\M$} if the multiplication $m: B \otimes B \to B$,
a morphism in $\M$, satisfies a commutative pentagon \cite[3.9]{N} w.r.t.\ associativity isomorphism  $\alpha_{A,A,A}$ and ``the unit'' $u: 1_{\M} \to A$,
a morphism in $\M$, satisfies two commutative rectangles \cite[3.10]{N} w.r.t.\ the natural isomorphisms $\ell_A, r_A$ in the notation of Subsection~\ref{subsec-tc}. (\textit{Coalgebra} $(B,\cop, \eps)$ is defined dually by coassociative comultiplication $\cop: B \rightarrow B \otimes B$ and counit $\eps: B \to 1_{\M}$ satisfying the counit diagrams.)  
That $B^{\otimes (n)} \| B^{\otimes (n+1)}$ for $n \geq 1$ follows from using the multiplication epi, split by the unit (e.g., see commutative diagram \cite[(3.10)]{N}), or the counit splitting the comultiplication monomorphism. 

\begin{definition}
\label{def-depthtens}
\begin{rm}
Let $B$ be an algebra (or coalgebra) in a tensor category $\M$.  Define $B$ to have depth $1$ if $B \sim 1_{\M}$. 
Define $B$ to have depth $2n+1$ ($n \geq 1$) if $B^{\otimes (n+1)} \| q \cdot B^{\otimes (n)}$
for
some $q \in \N$ ($\Leftrightarrow$ $B^{\otimes (n)} \sim B^{\otimes (n+1)}$) ; in this case, $B$ also has depth $2n+3, 2n+5,\ldots$ by tensoring repeatedly by $- \otimes B$.  If there is a finite $n \in \N$ like this, let $d(B, \M)$ denote the minimum depth (an odd number); otherwise, write $d(B,\M) = \infty$. 
\end{rm}
\end{definition}

\begin{example}
\begin{rm}
Let $A$ be a ring, with tensor category of bimodules ${}_A\M_A$.  An algebra $B$ (or monoid) in ${}_A\M_A$ has unit mapping $u: A \to B$ and multiplication $B \otimes_A B \to B$ satisfying associativity and unital axioms as usual.  This is equivalently a ring extension.  The depth
just defined is the minimum odd depth; i.e., $d(B, {}_A\M_A) = d_{odd}(A,B)$, which is obvious from Definition~\ref{def-depth} (with role reversal). 
\end{rm}
\end{example}
\begin{remark}
\begin{rm}
The reference \cite[3.8]{N} also sketches the definition of modules and bimodules over such algebras, as well as Morita equivalence between two such algebras. For example, a left module over algebra $A$ in tensor category ${}_B\M_B$ is an $A$-$B$-bimodule $N$ as an exercise in applying these ideas.
The category ${}_A\M_B$ is equivalent to the category ${}_A\M$ of left modules over $A$.  If $A'$
is another algebra in ${}_B\M_B$ Morita equivalent in the sense of \cite{N}, then ${}_{A'}\M_B$
is equivalent to ${}_A\M_B$.  This is the case if the ring extensions $B \to A$ and $B \to A'$ are Morita
equivalent in the sense of Section~5, cf. Diagram~(\ref{fig-mixedmodules}).  
\end{rm}
\end{remark}
\begin{example}
\label{ex-coringdepth=hdepth}
\begin{rm}
Let $B = \mathcal{C}$ be an $A$-coring; i.e., a coalgebra (or comonoid) in the tensor category ${}_A\M_A$.  Dual to algebra, there is a comultiplication $\cop: \mathcal{C} \to \mathcal{C} \otimes_A \mathcal{C}$ and counit $\eps: \mathcal{C} \to A$, both $A$-$A$-bimodule homomorphisms, satisfying coassociativity and counit diagrams \cite{BW}. The definition
of minimum depth $d(\mathcal{C}, {}_A\M_A)$ coincides with the depth $d(\mathcal{C},A)$ of corings defined
in \cite[2.1]{HKS}: $d(\mathcal{C}, {}_A\M_A) = d(\mathcal{C},A)$. 

Let $A \supseteq B$ be a ring extension, and $\mathcal{C} = A \otimes_B A$ its Sweedler $A$-coring,
with comultiplication simplifying to $A^{\otimes_B (2)} \to A^{\otimes_B (3)}$, $a_1 \otimes_B a_2 \mapsto a_1 \otimes_B 1 \otimes_B a_2$, and counit $\eps_{\mathcal{C}}: A \otimes_B A \to A$, $a_1 \otimes_B a_2 \mapsto a_1 a_2$ ($a_1,a_2 \in A$).  Comparing with Definition~\ref{def-depth}
and applying cancellations of the type $X \otimes_A A \cong X$,
we see that coring depth of $\mathcal{C}$ recovers h-depth of the ring extension: $d(\mathcal{C}, {}_A\M_A) = d_h(B,A)$.  
\end{rm}
\end{example}
Suppose $k$ is a field, the ground field below for all algebras, coalgebras, modules and  unadorned tensor products in finite tensor categories (including the tensor category of finite-dimensional vector spaces,  Vect$_k$). 
\begin{example}
\begin{rm}
Let $H$ be a finite-dimensional Hopf $k$-algebra; its category of finite-dimensional modules $\M_H$ is
a finite tensor category \cite{EO}.  The tensor $\otimes = \otimes_k$ is defined by the diagonal action,
where 
$V \otimes W$: $(v \otimes w) \cdot h = v h\1 \otimes w h\2$. The unit module is $k_{\eps}$ where
$\eps: H \to k$ is the counit. An algebra $A$ in $\M_H$ is a right $H$-module algebra, which the reader may check satisfies the (measuring) axioms $(ab). h = (a. h\1)(b. h\2)$ and $1_A . h = 1_A \eps(h)$ for all $a,b \in A$ and $h \in H$. A coalgebra $C$ in $\M_H$ is a right $H$-module coalgebra $(C, \cop, \eps_C)$ satisfying 
\begin{equation}
\label{eq: axiom}
\cop(ch) = c\1 h\1 \otimes c\2 h\2, \ \ \eps_C(ch) = \eps_C(c) \eps(h)
\end{equation}
  for all $c \in C, h \in H$. 

The depth $d(A, \M_H)$ and $d(C, \M_H)$ is a linear rescaling of the minimum depth of any object in $\M_H$ defined in
\cite{K2014, HKY, HKS}, not an important difference, though slightly more convenient in formulas given below. 
\end{rm}
\end{example}
\begin{example}
\begin{rm}
Continuing with $H$, the category of right $H$-comodules $\M^H$ is a tensor category, where
$X,Y \in \M^H$ has tensor product $X \otimes Y$ as linear space with comultiplication
$x \otimes y \mapsto x\0 \otimes y\0 \otimes x\1 y\1$.  The unit module is  $k$ with
coaction $1_k \mapsto 1_H$. An algebra $A$ in $\M^H$ has multiplication $m: A \otimes A \to A$
and unit $k \to A$ right $H$-comodule morphisms.  This condition is equivalent to the coaction of $A$, $\rho_A:
A \to A \otimes H$, being an algebra homomorphism (w.r.t.\ the tensor algebra).   Thus $A$
is a right $H$-comodule algebra.  See for example \cite{Ma}.
\end{rm}  
\end{example}
\section{Entwining structures}
In this section we summarise the equalities and inequalities obtained in \cite{HKS} and \cite{HKY} between depths of entwined corings and factorisable algebras on the one hand (in the ``difficult'' tensor bimodule  category) and depth of an $H$-module coalgebra or algebra on the other hand (in a more manageable  finite tensor category \cite{EO}).  We study the quotient module $Q$ of a finite-dimensional Hopf subalgebra pair $R \subseteq H$ in terms of core Hopf ideals, duals and Frobenius extensions, and under conditions of semisimplicity, relative or not.  

Recall that an \textit{entwining
structure} of an  algebra $A$ and  coalgebra $C$ is given by a linear mapping  $\psi: C \otimes A \rightarrow A \otimes C$ (called  the entwining mapping) satisfying two 
commutative pentagons and two triangles (a bow-tie diagram on \cite[p.\ 324]{BW}).  Equivalently,
$(A \otimes C, \id_A \otimes \cop_C, \id_A \otimes \eps_C)$ is an $A$-coring with respect
to the $A$-bimodule structure $a(a' \otimes c){a''} = aa' \psi(c \otimes a'')$
(or conversely defining $\psi(c \otimes a) = (1_A \otimes c) a$) (details in \cite[32.6]{BW} or \cite[Theorem 2.8.1]{CMZ}). 

In more detail, an entwining structure mapping $\psi: C \otimes A \rightarrow A \otimes C$ takes
values usually denoted by $\psi(c \otimes a) = a_{\alpha} \otimes c^{\alpha} = a_{\beta} \otimes c^{\beta}$, suppressing linear sums of rank one tensors, and satisfies the axioms: (for all $a,b \in A, c \in C$)
\begin{enumerate}
\item  $\psi(c \otimes ab) = a_{\alpha}b_{\beta} \otimes {c^{\alpha}}^{\beta}$;
\item $\psi(c \otimes 1_A) = 1_A \otimes c$;
\item $a_{\alpha} \otimes \cop_C(c^{\alpha}) = {a_{\alpha}}_{\beta} \otimes {c\1}^{\beta} \otimes {c\2}^{\alpha}$
\item $a_{\alpha} \eps_C(c^{\alpha}) = a \eps_C(c)$,
\end{enumerate}
which is equivalent to two commutative pentagons (for axioms 1 and 3) and two commutative
triangles (for axioms 2 and 4), in an exercise.

\subsection{Doi-Koppinen entwinings \cite{BW, CMZ}} Let $H$ be a finite-dimensional Hopf algebra.  Suppose $A$ is an algebra in the tensor category of right $H$-comodules, equivalently, $A$ is a right $H$-comodule algebra.  Moreover, let
$(C, \cop_C, \eps_C)$ be a  coalgebra in the tensor category
$\mathcal{M}_H$,  right $H$-module coalgebra as noted in the example above in Section~2.  Of course, if $H = k$ is the trivial one-dimensional Hopf algebra, $A$ may be any $k$-algebra and $C$
any $k$-coalgebra.  

\begin{example}
\begin{rm}
The Hopf algebra $H$ is right $H$-comodule algebra over itself, where $\rho = \cop$.  Given a Hopf subalgebra
$R \subseteq H$ the quotient module $Q$ defined as $Q = H/ R^+H$. Note that $Q$ is a right $H$-module coalgebra. So is  $(H, \cop, \eps)$
trivially a right $H$-module coalgebra.  The canonical epimorphism $H \rightarrow Q$ denoted by $h \mapsto \overline{h}$ is an epi of right $H$-module coalgebras. The module $Q_H$ is cyclic with generator $\overline{1_H}$.
\end{rm}
\end{example}   

The mapping $\psi: C \otimes A \rightarrow A \otimes C$ defined by 
$\psi(c \otimes a) = a\0 \otimes c a\1$ is an entwining  (the \textit{Doi-Koppinen entwining}
\cite[33.4]{BW}, \cite[2.1]{CMZ}). 
From the equivalence of corings with entwinings, it follows that $A \otimes C$ has $A$-coring structure
\begin{equation}
\label{eq: comodulealgdiagonal}
a(a' \otimes c)a'' = aa' {a''}\0 \otimes c {a''}\1
\end{equation}
which defines the bimodule ${}_A(A \otimes C)_A$.  The coproduct is given by $\id_A \otimes \cop_C$
and the counit by $\id_A \otimes \eps_C$.   

Note that  Eq.~(\ref{eq: comodulealgdiagonal}) above, and Eq.~(\ref{eq: modcat}) below, exhibit the category $\mathcal{M}_A$ as a module category over $\mathcal{M}_H$ \cite{EO}.
 
\begin{prop}
\label{prop-comodalgentwinedepth} \cite[Prop.\ 4.2]{HKS}
The depth of the $A$-coring $A \otimes C$ (of a Doi-Koppinen entwining) and the depth of the $H$-module coalgebra $C$ are related by
$d(A \otimes C, {}_A\M_A) \leq  d(C, \mathcal{M}_H)$.  
\end{prop}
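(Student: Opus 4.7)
The plan is to realize the relation between the two depths through a single additive functor $F = A \otimes_k - : \M_H \to {}_A\M_A$, send an $H$-module $V$ to the $A$-bimodule $A \otimes V$ whose left $A$-action is multiplication on the first tensorand and whose right $A$-action is the diagonal Doi--Koppinen action
\[
(a \otimes v) \cdot a' \; = \; a\, {a'}\0 \otimes v\, {a'}\1 ,
\]
patterned on Eq.~(\ref{eq: comodulealgdiagonal}) (this is exactly the module category action $\M_A \times \M_H \to \M_A$ mentioned after that equation). The first thing to verify is that $F$ is well-defined: for any $H$-linear $f : V \to W$, the map $\id_A \otimes f$ is an $A$-bimodule map, since $(\id \otimes f)((a \otimes v)\cdot a') = a{a'}\0 \otimes f(v){a'}\1$ uses only the $H$-linearity of $f$. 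Because $\otimes_k$ is additive, $F$ preserves finite direct sums and retractions.

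The second and main step is a bimodule isomorphism
\[
\Phi_n : (A \otimes C)^{\otimes_A (n)} \; \stackrel{\sim}{\longrightarrow} \; F(C^{\otimes n}) \;=\; A \otimes C^{\otimes n}
\]
for each $n \geq 1$. On rank-one tensors I would define
\[
\Phi_n\bigl((a^{(1)} \otimes c_1) \otimes_A \cdots \otimes_A (a^{(n)} \otimes c_n)\bigr)
\; = \; a^{(1)} {a^{(2)}}\0 \cdots {a^{(n)}}\0 \otimes c_1 {a^{(2)}}\1 \cdots {a^{(n)}}\1 \otimes c_2 {a^{(3)}}\2 \cdots \otimes c_n,
\]
obtained by inductively pushing each $a^{(i)}$ across the left tensor factors using the right action of Eq.~(\ref{eq: comodulealgdiagonal}) (together with axioms 1--2 of the entwining and the $H$-module coalgebra axiom (\ref{eq: axiom})). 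A straightforward induction on $n$, starting from $(A \otimes C)\otimes_A(A \otimes C) \cong A \otimes C \otimes C$ via $(a \otimes c)\otimes_A(a' \otimes c') \mapsto a{a'}\0 \otimes c{a'}\1 \otimes c'$, shows $\Phi_n$ is a well-defined $A$-bimodule isomorphism, with inverse $a \otimes c_1 \otimes \cdots \otimes c_n \mapsto (a \otimes c_1) \otimes_A (1 \otimes c_2) \otimes_A \cdots \otimes_A (1 \otimes c_n)$. The verification that the right $A$-action on $A \otimes C^{\otimes n}$ produced this way is indeed the diagonal $H$-action on $C^{\otimes n}$ combined with the comodule action on $A$ is the main piece of bookkeeping, and is the step most likely to hide a sign or coaction-ordering subtlety.

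Once $\Phi_n$ is in hand, the inequality is immediate. Suppose $d(C, \M_H) = 2n+1$, so that in $\M_H$ one has $C^{\otimes (n+1)} \| q \cdot C^{\otimes n}$, witnessed by $H$-module morphisms $f_i : C^{\otimes n} \to C^{\otimes (n+1)}$ and $g_i : C^{\otimes (n+1)} \to C^{\otimes n}$ with $\sum_i f_i \circ g_i = \id$. Applying the additive functor $F$ and transporting by the isomorphisms $\Phi_{n}, \Phi_{n+1}$ yields $A$-bimodule maps witnessing $(A \otimes C)^{\otimes_A (n+1)} \| q \cdot (A \otimes C)^{\otimes_A n}$; the reverse divisibility is automatic from the counit of the coring (see the remark preceding Definition~\ref{def-depthtens}). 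Hence $d(A \otimes C, {}_A\M_A) \leq 2n+1$. The same argument handles depth $1$, since $C \sim k_\eps$ in $\M_H$ gives $A \otimes C \sim A \otimes k \cong A$ in ${}_A\M_A$.
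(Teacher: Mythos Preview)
Your proposal is correct and follows essentially the same approach as the paper's proof: both establish the $A$-bimodule isomorphism $(A \otimes C)^{\otimes_A n} \cong A \otimes C^{\otimes n}$ with the diagonal right $A$-action of Eq.~(\ref{eq: modcat}), then use the additive functor $A \otimes - : \M_H \to {}_A\M_A$ to transport the similarity $C^{\otimes n} \sim C^{\otimes (n+1)}$ and conclude via Definition~\ref{def-depthtens}. Your version is more explicit about the functor $F$, the map $\Phi_n$ and its inverse, and the depth-one case, but the substance is identical to the paper's argument.
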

\begin{proof}
One notes that $(A \otimes C)^{\otimes_A (n)} \cong A \otimes C^{\otimes (n)}$ as $A$-$A$-bimodules  via cancellations of
the type $X \otimes_A A \cong X$. Keeping track of the right $A$-module structure on $A \otimes C^{\otimes (n)}$, one shows that it
is given by
\begin{equation}
\label{eq: modcat}
(a \otimes c_1 \otimes \cdots \otimes c_n)b = ab\0 \otimes c_1 b\1 \otimes \cdots \otimes c_n b_{(n)}.
\end{equation}
 If  $d(C, \mathcal{M}_H) = n$, then $C^{\otimes (n)} \sim C^{\otimes (n+1)}$ in the finite tensor category $\mathcal{M}_H$.  Applying an additive functor,
it follows that $A \otimes C^{\otimes (n)} \sim A \otimes C^{\otimes (n+1)}$ as $A$-bimodules.
Then applying the isomorphism just above and Definition~\ref{def-depthtens} obtains the inequality in the proposition. 
\end{proof}

For example, if $A = H$,  and $C$ a right $H$-module coalgebra,
the Doi-Koppinen entwining mapping $\psi: C \otimes H \rightarrow  H \otimes C$ is of course $\psi(c \otimes h) = h\1 \otimes c h\2$. The associated $H$-coring $H \otimes C$ has coproduct $\id_H \otimes \cop_C$ and counit
$\id_H \otimes \eps_C$ with $H$-bimodule structure: ($x,y,h \in H, c \in C$) 
\begin{equation}
\label{eq: coring}
x(h \otimes c) y = xh y\1 \otimes c y\2
\end{equation}

\begin{cor}
\label{prop-entwinedepth}\cite[Prop.\ 3.2]{HKS}
The depth of the $H$-coring $H \otimes C$ and the depth of the $H$-module coalgebra $C$ are related by
$d(H \otimes C, H) = d(C, \mathcal{M}_H) $.  
\end{cor}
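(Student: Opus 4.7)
The plan is to prove the two inequalities separately. The inequality $d(H\otimes C,H)\le d(C,\mathcal{M}_H)$ is immediate: this is Proposition~\ref{prop-comodalgentwinedepth} applied in the special case $A=H$, where $H$ is regarded as a right $H$-comodule algebra via its coproduct. So the substantive content is the reverse inequality $d(C,\mathcal{M}_H)\le d(H\otimes C,H)$.

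For the reverse inequality, my plan is to exhibit an additive functor $F:{}_H\mathcal{M}_H\to\mathcal{M}_H$ that sends the $H$-bimodule $H\otimes C^{\otimes n}$ (with structure given by Eq.~(\ref{eq: coring}) extended to $n$-fold tensors, that is, left multiplication on the $H$-factor and the diagonal right action from Eq.~(\ref{eq: modcat})) to the right $H$-module $C^{\otimes n}$ of the finite tensor category $\mathcal{M}_H$. The natural candidate is $F=k_\eps\otimes_H-$, where $k_\eps$ is the one-dimensional left $H$-module via the counit. First I would verify the vector-space isomorphism $k_\eps\otimes_H(H\otimes C^{\otimes n})\cong C^{\otimes n}$, using that every element has a representative $1\otimes 1_H\otimes c_1\otimes\cdots\otimes c_n$. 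Then I would check that the residual right $H$-action on $C^{\otimes n}$, read off from Eq.~(\ref{eq: modcat}) after passing the unit through $\eps$, is precisely the diagonal action $(c_1\otimes\cdots\otimes c_n)\cdot y=c_1 y_{(1)}\otimes\cdots\otimes c_n y_{(n)}$ that defines the tensor product in $\mathcal{M}_H$. This gives a natural isomorphism $F((H\otimes C)^{\otimes_H n})\cong C^{\otimes n}$ in $\mathcal{M}_H$, using the $H$-bimodule isomorphism $(H\otimes C)^{\otimes_H n}\cong H\otimes C^{\otimes n}$ already invoked in the proof of Proposition~\ref{prop-comodalgentwinedepth}.

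Once $F$ is in hand, the argument finishes formally: if $d(H\otimes C,H)=n$, then $(H\otimes C)^{\otimes_H(n+1)}\oplus *\cong q\cdot(H\otimes C)^{\otimes_H n}$ in ${}_H\mathcal{M}_H$ for some $q\in\N$, and applying the additive functor $F$ transports this similarity to $C^{\otimes(n+1)}\oplus *\cong q\cdot C^{\otimes n}$ in $\mathcal{M}_H$. Combined with the automatic $C^{\otimes n}\mid C^{\otimes(n+1)}$ coming from the counit of the coalgebra $C$, this yields $C^{\otimes n}\sim C^{\otimes(n+1)}$ in $\mathcal{M}_H$, hence $d(C,\mathcal{M}_H)\le n$, as required.

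I expect the main (minor) obstacle to be the bookkeeping in the identification of the right $H$-action after passing to $k_\eps\otimes_H-$: one needs to carefully track how the first tensor factor in Eq.~(\ref{eq: modcat}), $hb_{(1)}$, becomes $\eps(h)\eps(b_{(1)})=\eps(h)$ modulo the Sweedler indices of the remaining factors, so that the surviving right action on $C^{\otimes n}$ is exactly the $\M_H$-diagonal action with Sweedler components $b_{(1)},\ldots,b_{(n)}$ rather than $b_{(2)},\ldots,b_{(n+1)}$. Apart from this renumbering, everything is a direct transport along an additive functor.
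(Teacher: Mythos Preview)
Your proposal is correct and follows exactly the paper's approach: one inequality from Proposition~\ref{prop-comodalgentwinedepth} with $A=H$, and the reverse by applying the additive functor $k_\eps\otimes_H -$ to the $H$-bimodule similarity $H\otimes C^{\otimes n}\sim H\otimes C^{\otimes(n+1)}$ to obtain $C^{\otimes n}\sim C^{\otimes(n+1)}$ in $\mathcal{M}_H$. The only slip is notational: in your final paragraph you write ``if $d(H\otimes C,H)=n$'' and then refer to tensor powers $n$ and $n+1$, whereas in the paper's convention depth $2n+1$ corresponds to comparing tensor powers $n$ and $n+1$; this is cosmetic and does not affect the argument.
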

\begin{proof}
This follows immediately from the proposition, but the proof reverses as follows. 
If $d(H \otimes C, {}_H\M_H) = 2n+1$, so that $H \otimes C^{\otimes (n)} \sim H \otimes C^{\otimes (n+1)}$ as $H$-$H$-bimodules, apply the additive functor $k \otimes_H -$ to the similarity
and obtain the similarity of right $H$-modules, $C^{\otimes (n)} \sim C^{\otimes (n+1)}$.  Thus $d(C, \mathcal{M}_H)  \leq d(H \otimes C, {}_H\M_H)$ as well.   
\end{proof}

The corollary applies as follows. Let $K \subseteq H$ be a left coideal subalgebra of a finite-dimensional Hopf algebra;
i.e., $\cop(K)\subseteq  H \otimes K$.  Let $K^+$ denote the kernel of the counit restricted to $K$.
Then $K^+H$ is a right $H$-submodule of $H$ and a coideal by a short computation given
in \cite[34.2]{BW}.  Thus $Q := H/K^+H$ is a right $H$-module coalgebra (with a right $H$-module
coalgebra epimorphism $H \rightarrow Q$ given by $h \mapsto h + K^+H := \overline{h}$).
The $H$-coring $H \otimes Q$ has grouplike element $1_H \otimes \overline{1_H}$; in fact,
\cite[34.2]{BW} together with \cite{S} shows that this coring is Galois:
\begin{equation}
\label{eq: map}
H \otimes_K H \stackrel{\cong}{\longrightarrow} H \otimes Q
\end{equation}
via $x \otimes_R y \mapsto x y\1 \otimes \overline{y\2}$, an $H$-$H$-bimodule isomorphism.  That $H_K$ is faithfully flat follows from Skryabin's Theorem  \cite{S} that $K$ is a Frobenius algebra and  $H_K$ is free.  Note that an inverse to (\ref{eq: map}) is given
by $x \otimes \overline{z} \mapsto xS(z\1) \otimes_K z\2$ for all $x,z \in H$. 

From Proposition~\ref{prop-entwinedepth}, Eq.~(\ref{eq: map}) and Example~\ref{ex-coringdepth=hdepth} we note the first statement below. The second statement is proven similarly as shown in \cite{K2014}.     

\begin{cor}
\label{cor-cue}\cite[Corollary 3.3]{HKS}\cite[Theorem 5.1]{K2014}
The h-depth of $K \subseteq H$ 
is related to the depth of $Q$ in $\M_H$ by 
\begin{equation}
\label{eq: cue}
d_h(K,H) = d(Q, \mathcal{M}_H).
\end{equation}  
If $R$ is a Hopf subalgebra of $H$, the following holds:
\begin{equation}
\label{eq: cue-even}
d_{even}(R,H) = d(Q,\mathcal{M}_R) + 1
\end{equation} 
\end{cor}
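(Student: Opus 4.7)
The plan is to chain together results already in the excerpt for the first equality, and to iterate the Galois isomorphism for the second.

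For (\ref{eq: cue}): Example~\ref{ex-coringdepth=hdepth} applied to $K \subseteq H$ identifies the Sweedler coring depth with the h-depth, giving $d(H \otimes_K H, {}_H\M_H) = d_h(K, H)$. The Galois map of Eq.~(\ref{eq: map}) is an $H$-$H$-bimodule isomorphism $H \otimes_K H \cong H \otimes Q$, so depths transport: $d(H \otimes Q, {}_H\M_H) = d_h(K, H)$. Corollary~\ref{prop-entwinedepth} applied to the right $H$-module coalgebra $Q$ gives $d(H \otimes Q, {}_H\M_H) = d(Q, \M_H)$, and transitivity yields (\ref{eq: cue}).

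For (\ref{eq: cue-even}): First iterate Eq.~(\ref{eq: map}) by induction on $n$ to obtain an $H$-$H$-bimodule isomorphism
$$ H^{\otimes_R (n+1)} \;\cong\; H \otimes Q^{\otimes n} \qquad (n \geq 0), $$
where the right-hand side carries multiplication in the first factor as left $H$-action and the iterated entwined diagonal action (the $n$-fold version of Eq.~(\ref{eq: coring})) as right $H$-action. Since $R$ is a Hopf subalgebra, restricting the right action along $R \hookrightarrow H$ yields an $H$-$R$-bimodule isomorphism in which the right $R$-action on $Q^{\otimes n}$ is the standard diagonal action of $\M_R$. Taking right depth for concreteness, $d_{even}(R, H) \leq 2n$ is equivalent to the $H$-$R$-bimodule similarity $H^{\otimes_R n} \h H^{\otimes_R (n+1)}$; transporting along the iterated Galois map yields $H \otimes Q^{\otimes (n-1)} \h H \otimes Q^{\otimes n}$. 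Applying the additive functor $k_\eps \otimes_H -$, which satisfies $k_\eps \otimes_H (H \otimes V) \cong V$ as right $R$-modules (killing the leading $H$ via the counit while preserving the diagonal right $R$-action), produces $Q^{\otimes (n-1)} \h Q^{\otimes n}$ in $\M_R$, hence $d(Q, \M_R) \leq 2n-1$. The reverse inequality follows from applying the additive functor $V \mapsto H \otimes V$ (with diagonal right $R$-action) to a similarity in $\M_R$ and transporting back through the Galois iterate.

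The main obstacle is the bookkeeping of bimodule actions across the iterated Galois isomorphism: one must verify that its inverse---an $n$-fold analogue of $x \otimes \overline{z} \mapsto xS(z\1) \otimes_R z\2$---intertwines the diagonal right $R$-action on $H \otimes Q^{\otimes n}$ with the action via the rightmost factor on $H^{\otimes_R (n+1)}$, so that the additive functors $k_\eps \otimes_H -$ and $H \otimes -$ convert similarities between $H$-$R$-bimodules and $\M_R$ reversibly, giving equality in both directions.
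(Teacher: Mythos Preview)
Your argument for Eq.~(\ref{eq: cue}) is exactly the paper's: it cites precisely Example~\ref{ex-coringdepth=hdepth}, Eq.~(\ref{eq: map}), and Corollary~\ref{prop-entwinedepth} and chains them in the same order. For Eq.~(\ref{eq: cue-even}) the paper gives no proof beyond ``proven similarly as shown in \cite{K2014}''; your sketch---iterating the Galois isomorphism to get $H^{\otimes_R (n+1)} \cong H \otimes Q^{\otimes n}$ as $H$-$R$-bimodules, then passing back and forth with the additive functors $k_{\eps} \otimes_H -$ and $H \otimes -$---is the natural reading of ``similarly'' and is correct (the iterated isomorphism is in fact the content of the computation $(H \otimes Q)^{\otimes_H n} \cong H \otimes Q^{\otimes n}$ already noted in the proof of Proposition~\ref{prop-comodalgentwinedepth}, combined with $(H \otimes_R H)^{\otimes_H n} = H^{\otimes_R (n+1)}$).
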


The following is of use to computing depth graphically from a bicolored graph in case $R$ and $H$ are semisimple $\C$-algebras. Let
$U$ denote the functor of restriction-induction, i.e., $U = \Ind^H_R \Res^H_R : \M_H \rightarrow \M_H$.
\begin{prop}
The depth $d(Q, \M_H) = 2n+1$ is the least $n$  for which $U^n(k) \sim U^{n+1}(k)$.  
\end{prop}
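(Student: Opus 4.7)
The plan is to reduce the claim to Definition~\ref{def-depthtens} by identifying the endofunctor $U = \Ind^H_R \Res^H_R$ on $\M_H$ with the tensor-by-$Q$ functor $-\otimes Q$, and then reading off the minimum depth directly.

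First I would exhibit, for every right $H$-module $V$, a natural right $H$-module isomorphism
$$\Phi_V \colon V \otimes_R H \stackrel{\sim}{\longrightarrow} V \otimes Q,$$
where the right-hand side carries the diagonal $H$-action of the tensor category $\M_H$. The candidate is $\Phi_V(v \otimes_R h) = v \cdot h\1 \otimes \overline{h\2}$ with inverse $v \otimes \bar h \mapsto v \cdot S(h\1) \otimes_R h\2$. The well-definedness of $\Phi_V^{-1}$ on the quotient $H/R^+H$ reduces to $S(r\1)r\2 = \eps(r)1_H$ for $r \in R^+$, and the antipode axiom together with a straightforward Sweedler-calculus computation shows that both composites are identities. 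The $H$-linearity of $\Phi_V$ under the diagonal action is the crucial check, but this is essentially the Galois-extension computation underlying Eq.~(\ref{eq: map}), here carried out for arbitrary $V$ rather than just $V = H$.

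Next I would iterate. Applied to $V = Q^{\otimes m}$, which is itself an object of $\M_H$, the isomorphism $\Phi_{Q^{\otimes m}}$ gives a natural isomorphism $U(Q^{\otimes m}) \cong Q^{\otimes(m+1)}$ in $\M_H$. Since $k_\eps$ is the unit object of $\M_H$, the unit constraint $\ell$ identifies $k_\eps \otimes Q^{\otimes n} \cong Q^{\otimes n}$, and a short induction on $n$ yields
$$U^n(k_\eps) \cong Q^{\otimes n} \quad \text{in } \M_H \text{ for every } n \geq 0.$$

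With this identification the proposition is immediate. The similarity relation $\sim$ in $\M_H$ transports across a natural isomorphism, so $U^n(k) \sim U^{n+1}(k)$ if and only if $Q^{\otimes n} \sim Q^{\otimes (n+1)}$, and by Definition~\ref{def-depthtens} this is precisely the condition $d(Q, \M_H) \leq 2n+1$ (with $n = 0$ correctly corresponding to $Q \sim k_\eps$ and depth $1$). Taking $n$ to be the least such index therefore gives $d(Q, \M_H) = 2n+1$. The main obstacle I anticipate is verifying the $H$-linearity of $\Phi_V$ under the diagonal action on $V \otimes Q$; once that is in hand, everything else is formal bookkeeping and repeated use of the cancellation $X \otimes_H H \cong X$ implicit in defining $U = \Ind^H_R \Res^H_R$ on the right-module side.
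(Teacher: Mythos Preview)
Your proposal is correct and follows essentially the same route as the paper: establish a natural isomorphism $U(M)\cong M\otimes Q$ in $\M_H$, iterate to obtain $U^n(k_\eps)\cong Q^{\otimes n}$, and read off the depth from Definition~\ref{def-depthtens}. The paper's own proof simply cites \cite{K2014} for the isomorphism $U(M)\cong M\otimes Q$ (which in fact reappears explicitly as Eq.~(\ref{eq: inductQ}) later in the text, with exactly the formulas you wrote down for $\Phi_V$ and its inverse), so your version merely unpacks that citation rather than doing anything genuinely different.
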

\begin{proof}
Recall  that $Q \cong k \otimes_R H$ and for any module $M_H$,
$U(M) \cong M \otimes Q$ (tensor in $\M_H$) \cite{K2014}.  It follows by induction that $Q^{\otimes (n)} \cong U^n(k)$.  
\end{proof}
Note that decomposing $Q$ into its projective-free direct summand $Q_0$ and projective summand
$Q_1$, such that $Q = Q_0 \oplus Q_1$, leads to the following from the fact that projectives
form an ideal in the Green ring of $H$.
\begin{prop}
The depth of the Hopf subalgebra, $d_h(R,H) < \infty$ if and only if the module depth $d(Q_0, \M_H) < \infty$.
\end{prop}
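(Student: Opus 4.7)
My plan is to reduce the statement via Corollary~\ref{cor-cue} and then control projective summands across tensor powers. By Corollary~\ref{cor-cue} we have $d_h(R,H) = d(Q,\M_H)$, so the task is to show $d(Q,\M_H) < \infty \iff d(Q_0,\M_H) < \infty$. Since $H$ is finite-dimensional, $\M_H$ is Krull--Schmidt; moreover, $Q$ is a coalgebra in $\M_H$, so $\cop \colon Q \to Q \otimes Q$ is a split monomorphism (with retraction $\eps \otimes \id$), and this tensors up to give $Q^{\otimes(n)} \| Q^{\otimes(n+1)}$. Therefore, as in Example~\ref{ex-artinian}, the set $\Indec(Q^{\otimes(n)})$ grows with $n$, and $d(Q,\M_H) < \infty$ is equivalent to $\bigcup_{n \geq 1} \Indec(Q^{\otimes(n)})$ being a finite set; the same equivalence holds with $Q_0$ in place of $Q$ since $Q_0$ is likewise a summand chain under tensoring.

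The main step is a clean decomposition of $Q^{\otimes(n)}$. Expanding $(Q_0 \oplus Q_1)^{\otimes(n)}$ distributes into a direct sum of $2^n$ iterated tensor monomials. Using the tensor identity for a finite-dimensional Hopf algebra $H$, namely that projectives form an ideal of $\M_H$ under $\otimes$ (the fact recalled in the introduction citing \cite{HKY}), every monomial containing at least one factor of the projective module $Q_1$ is itself projective. Collecting these yields a decomposition
\begin{equation*}
Q^{\otimes(n)} \cong Q_0^{\otimes(n)} \oplus P_n
\end{equation*}
for some projective $H$-module $P_n$, and so $\Indec(Q^{\otimes(n)}) = \Indec(Q_0^{\otimes(n)}) \cup \Indec(P_n)$.

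To conclude, I would observe that $H$ admits only finitely many isomorphism classes of indecomposable projective modules (one per simple $H$-module), so $\bigcup_n \Indec(P_n)$ is contained in this fixed finite set regardless of $n$. Consequently $\bigcup_n \Indec(Q^{\otimes(n)})$ is finite if and only if $\bigcup_n \Indec(Q_0^{\otimes(n)})$ is finite, which is the claimed equivalence. I do not expect a real obstacle: the only subtle ingredient is the decomposition above, which rests on the tensor-ideal property of projectives in $\M_H$; once that is in place, the remainder is bookkeeping of Krull--Schmidt constituents against a finite reservoir of projective indecomposables.
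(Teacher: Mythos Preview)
Your overall strategy matches the paper's: reduce via $d_h(R,H)=d(Q,\M_H)$, decompose $Q^{\otimes(n)}\cong Q_0^{\otimes(n)}\oplus P_n$ using that projectives form a tensor ideal, and exploit that there are only finitely many indecomposable projectives. The reformulation in terms of the finiteness of $\bigcup_n \Indec(-)$ is also how the paper reasons.

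There is one genuine gap. You assert that ``$Q_0$ is likewise a summand chain under tensoring,'' i.e.\ $Q_0^{\otimes(n)}\,\|\,Q_0^{\otimes(n+1)}$. This is not justified: your argument for $Q$ used the coalgebra structure (the split mono $\cop$), but $Q_0$ is merely the projective-free summand of $Q$ and carries no such structure. In general a tensor power $Q_0^{\otimes(n)}$ may well acquire projective summands or lose constituents relative to $Q_0^{\otimes(n+1)}$, so monotonicity of $\Indec(Q_0^{\otimes(n)})$ in $n$ is not available. Without it, your characterization ``$d(Q_0,\M_H)<\infty \iff \bigcup_n \Indec(Q_0^{\otimes(n)})$ finite'' is unmoored, since Definition~\ref{def-depthtens} only applies to (co)algebras.

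The paper handles exactly this point by invoking the \emph{extended} module-depth definition from \cite{K2014}: for an arbitrary $X\in\M_H$ set $T_n(X)=X\oplus\cdots\oplus X^{\otimes(n)}$ and declare depth $n$ when $T_n(X)\sim T_{n+1}(X)$. Then $T_n(Q_0)\,\|\,T_{n+1}(Q_0)$ holds trivially, $\Indec(T_n(Q_0))=\bigcup_{i\le n}\Indec(Q_0^{\otimes(i)})$, and your finiteness criterion becomes valid. With this single adjustment your argument goes through and is essentially the paper's proof.
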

\begin{proof}
For the statement and proof of this proposition,  we apply the extended definition of module depth of any finitely generated module $X \in \M_H$
in terms of the depth $n$ condition, $T_n(X) \sim T_{n+1}(X)$ where $T_n(X) = X \oplus \cdots \oplus X^{\otimes (n)}$
\cite{K2014}. Since $T_n(X) \| T_{n+1}(X)$, any projective module $Y$ has finite depth, as there are a finite number of isoclasses of projective indecomposables.  But $Y \otimes M$ is projective as well
for any $M \in \M_H$.  Then  $Q^{\otimes (n)} = Q_0^{\otimes (n)} \oplus Q_1^{\otimes (n)} \oplus$ mixed terms of $Q_0, Q_1$, which are all projective.   Thus $d_h(R,H) < \infty$ $\Leftrightarrow$ $Q^{\otimes (n)} \sim Q^{\otimes (n+1)}$ as $H$-modules for some $n \in \N$, which  implies  that the summand $Q_0$ has finite depth by \cite[Lemma 4.4]{K2014}. Conversely, if $T_n(Q_0) \sim
T_{n+1}(Q_0)$ as $H$-modules, from $T_i(Q) \| T_{i+1}(Q)$,  we obtain that
$T_{n+m}(Q) \sim T_{n+m+1}(Q)$,  equivalently $Q^{\otimes (n+m)} \sim
Q^{\otimes (n+m+1)}$, where $m$ is the number of distinct isoclasses of projective indecomposables.  
\end{proof}

\subsection{Semisimple and separable extensions}  Recall that any ring extension $A \supseteq B$ is said to be a \textit{right semisimple extension} if any right $A$-module $N$ is relative projective, i.e., $N \| N \otimes_B A$ as $A$-modules. More strongly, a ring extension $A \supseteq B$ is said to be a \textit{separable extension} if for any right $A$-module $M$, the multiplication epimorphism
$\mu_M: M \otimes_B A \rightarrow M$ splits \cite{HS}, which also generalizes the straightforward  notion of left semisimple extension.  The following theorem is a \textit{relative Maschke theorem} characterizing   semisimple extensions of finite-dimensional Hopf algebras $R \subseteq H$.  We freely use the notation $Q = H/R^+H$ and ground field $k$ developed above.
\begin{theorem}
The Hopf subalgebra pair $R \subseteq H$ is a right (or left) semisimple extension $\Leftrightarrow$ $k_H \| Q_H$ $\Leftrightarrow$ $k_H$ is $R$-relative projective $\Leftrightarrow$ there is  $q \in Q$ such that $\eps_Q(q) \not = 0$ and $qh = q \eps(h)$ for every $h \in H$ $\Leftrightarrow$  $\exists \, s \in H: \ sH^+ \subseteq R^+H$ and $\eps(s) = 1$ $\Leftrightarrow$ $H$ is a separable extension of $R$. 
\end{theorem}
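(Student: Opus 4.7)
The plan is to prove the six conditions equivalent via a single cycle. I would begin with the trivial implication (6) $\Rightarrow$ (1): any splitting of $\mu_M$ yields $M \| M \otimes_R H$ directly. For (1) $\Rightarrow$ (2), I specialize the definition of semisimple extension to $N = k_H$, using the identification $k_H \otimes_R H \cong Q_H$ that arises because $k_H$ restricts to $k_R$ via $\eps|_R$ and $Q = H/R^+H$. Conditions (2) and (3) are then the same statement under this identification, since $R$-relative projectivity of $k_H$ means precisely $k_H \| k_H \otimes_R H \cong Q_H$.

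For the middle equivalences I would exploit that $Q$ is cyclic with generator $\overline{1_H}$, which forces $\Hom_H(Q, k_{\eps}) = k \cdot \eps_Q$ to be one-dimensional, while $\Hom_H(k_{\eps}, Q) \cong Q^H := \{q \in Q : qh = q\eps(h)\ \forall h\}$. A splitting pair for $k_H \| Q_H$ therefore amounts to a pair $(q,c)$ with $q \in Q^H$ and $c \cdot \eps_Q(q) = 1$, giving (2) $\Leftrightarrow$ (4) directly. For (4) $\Leftrightarrow$ (5), I lift $q = \overline{s}$ to $s \in H$: the invariance $qh = q\eps(h)$ in $Q$ reads $s(h - \eps(h)1) \in R^+H$ for all $h$, equivalently $sH^+ \subseteq R^+H$, while $\eps_Q(q) \neq 0$ becomes $\eps(s) \neq 0$, which can be rescaled to $1$. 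To return to (1), I would use the natural Hopf-module isomorphism $M \otimes_R H \xrightarrow{\sim} M \otimes Q$ in $\M_H$ given by $m \otimes_R h \mapsto mh_{(1)} \otimes \overline{h_{(2)}}$ (a module-level version of Eq.~(\ref{eq: map})): tensoring $k_H \| Q_H$ by any $M \in \M_H$ via the diagonal action yields $M \| M \otimes_R H$, establishing (3) $\Rightarrow$ (1).

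The substantive step, and the main obstacle, is (5) $\Rightarrow$ (6) --- the relative Maschke direction. I propose the candidate separability element
\[
e := \sum S(s_{(1)}) \otimes_R s_{(2)} \in H \otimes_R H,
\]
for which $\mu(e) = \sum S(s_{(1)}) s_{(2)} = \eps(s) 1 = 1$ is immediate from the antipode axiom. The heart of the matter is verifying the $H$-bicentrality $he = eh$. My approach is to transport $e$ along the isomorphism $H \otimes_R H \cong H \otimes Q$ of Eq.~(\ref{eq: map}); under this transport $e \mapsto 1 \otimes \overline{s}$, and the $H$-bimodule structure on $H \otimes Q$ is left action on the first factor, right diagonal action. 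Bicentrality then reduces to the identity
\[
h \otimes \overline{s} = \sum h_{(1)} \otimes \overline{s h_{(2)}}\quad\text{in}\quad H \otimes Q,
\]
which follows at once from (5): since $s h_{(2)} - \eps(h_{(2)}) s \in s H^+ \subseteq R^+H$, we get $\overline{s h_{(2)}} = \eps(h_{(2)}) \overline{s}$, and the right side collapses by the counit axiom to $h \otimes \overline{s}$. With such an $H$-central $e$ satisfying $\mu(e)=1$ in hand, the natural section $\sigma_M(m) := m \cdot e$ splits $\mu_M : M \otimes_R H \to M$ as right $H$-modules for every $M \in \M_H$, establishing (6) and closing the cycle.
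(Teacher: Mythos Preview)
Your proof is correct and follows essentially the same route as the paper: the same separability element $e = \sum S(s_{(1)}) \otimes_R s_{(2)}$, the same chain of implications, and the same identification $M \otimes_R H \cong M \otimes Q$ to pass between (1) and (2). The one genuine difference is in verifying $H$-bicentrality of $e$: the paper works directly in $H \otimes_R H$, writing $sh = \eps(h)s - \sum_i x_i h_i$ with $x_i \in R^+$ and applying $\pi(S \otimes \id)\Delta$ to this equation, whereas you transport $e \mapsto 1 \otimes \overline{s}$ along the Galois isomorphism of Eq.~(\ref{eq: map}) and check centrality in $H \otimes Q$, where it reduces immediately to $\overline{sh_{(2)}} = \eps(h_{(2)})\overline{s}$. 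Your verification is shorter and makes the role of condition (5) more transparent; the paper's direct computation avoids invoking the Galois isomorphism and its $H$-bimodule compatibility, but is otherwise equivalent.
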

\begin{proof}
The counit of $Q$, given by $\eps_Q(\overline{h}) = \eps(h)$ for $h \in H$, is always $R$-split by
$1 \mapsto \overline{1_H}$. If all modules are relative projective, it follows that $\eps_Q$ $H$-splits, so $k_H$ is isomorphic to a direct summand of $Q_H$.  Conversely, if $Q_H \cong k_H \oplus {Q'}_H$, then any $H$-module
$N$ satisfies by \cite[Lemma 3.1]{K2014} $$N \otimes_R H \cong N_. \otimes Q_. \cong N \oplus (N_. \otimes {Q'}_. )$$
since $N_. \otimes k_. \cong N_H$.  Thus, $N$ and all $H$-modules are relative projective.  

If $\eps_Q: Q \rightarrow k$ is split by an $H$-module mapping $k_H \rightarrow Q_H$, where $1 \mapsto q$
under this mapping, then $q$ satisfies the integral-like condition of the theorem as well as $\eps_Q(q) = 1$.  Moreover,  $q = \overline{s} \not = \overline{0}$, satisfies $\eps(s) = 1$ and $sh - s\eps(h) \in R^+H$ for all $h \in H$, but all elements of $H^+$ are of the form $h - \eps(h)1_H$.  

If an element $s \in H$ exists satisfying the conditions of the theorem, for any $H$-module $M$,
the epi $\mu_M: M \otimes_R H \rightarrow M$ is split by $m \mapsto m S(s\1) \otimes_R s_2$.  This is also seen from a commutative triangle using $M \otimes_R H \stackrel{\cong}{\longrightarrow} M_. \otimes Q_.$ and the mappings in \cite[Lemma 3.1]{K2014}. Note that $S(s\1) \otimes_R s_2$ is a separability element, for given any $ h \in H $, $sh = \eps(h)s -\sum_i x_i h_i$ for some $x_i \in R^+,
h_i \in H$.  Applying $\pi (S\otimes \id)\cop$ (where $\pi: H \otimes H \rightarrow H\otimes_R H$
is the canonical epimorphism) to this equation: $S(h\1) S(s\1) \otimes_R s\2 h\2 = $
$$ \eps(h) S( s\1) \otimes_R s\2 - \sum_i S({h_i}\1) S({x_i}\1) \otimes_R {x_i}\2 {h_i}\2 $$
$$ = \eps(h) S(s\1) \otimes_R s\2.$$
Then $hS(s\1) \otimes_R s_2 = S(s\1) \otimes_R s\2 h$ for all $h \in H$ follows from a standard
application of $h\1S(h\2) \otimes h\3 = 1 \otimes h$.  
\end{proof}
Note that if $R = k1_H$, the theorem recovers the extended Maschke's theorem for Hopf algebras (e.g.,
\cite[Ch.\ 2]{M}), since
$R^+ = \{ 0 \}$, $Q = H$ and $q$ or $s$ are integral elements of $H$ with nonzero counit.  
For example, if $Q^{\otimes (n)}$
is projective as an $H$- or $R$-module for any $n \in N$, it follows from this theorem that $R$ is semisimple, since $k_R \| Q \| \cdots \| Q^{\otimes (n)}$.  

Let $t_R, t_H$ denote nonzero right integrals in $R,H$, respectively, for the proof of the  corollary below.  
\begin{cor}
Suppose $H \supseteq R$ is a semisimple extension of finite-dimensional Hopf algebras.  Then
\begin{enumerate}
\item the modular functions of $H$ and $R$ satisfy $m_H |_R = m_R$;
\item the Nakayama automorphisms of $H$ and $R$ satisfy $\eta_H |_R = \eta_R$;
\item the extension $H \supseteq R$ is an ordinary Frobenius extension.
\end{enumerate}
\end{cor}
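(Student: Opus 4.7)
The plan is to derive all three items from the theorem just proved together with the Fischman--Montgomery--Schneider structure theorem, which asserts that any finite-dimensional Hopf subalgebra pair $R \subseteq H$ is automatically a $\beta$-Frobenius extension, with distortion $\beta \in \Aut(R)$ an explicit algebra automorphism built from the modular functions and antipodes. The condition $\beta = \id_R$ is equivalent both to the equality of modular functions in (1) and to the Frobenius structure of (3) being untwisted.

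I would address (1) first. The theorem above produces $s \in H$ with $\eps(s)=1$ and $sH^+ \subseteq R^+H$, so $R \subseteq H$ is a separable extension.  Using $s$ together with the separability idempotent $S(s\1)\otimes_R s\2$, I would construct an $(R,R)$-bilinear conditional expectation $E\colon H \to R$ with $E(1_H)=1_R$; the sharpening of separability afforded by the Hopf structure and by the splitting $k_H \| Q_H$ exploited in the theorem should yield such an $E$. Given $E$, for a nonzero left integral $\Lambda_H \in H$ the computation
\begin{equation*}
r\, E(\Lambda_H) \;=\; E(r \Lambda_H) \;=\; \eps(r)\, E(\Lambda_H), \qquad r \in R,
\end{equation*}
shows that $E(\Lambda_H)$ is a left integral of $R$, hence equals $c\,\Lambda_R$ for some $c \in k$; nondegeneracy of the relative Frobenius pairing together with $E(1_H)=1_R$ forces $c\neq 0$. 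Applying $E$ to $\Lambda_H r = m_H(r)\Lambda_H$ and using the $(R,R)$-bilinearity of $E$ then yields $c\, m_R(r)\Lambda_R = c\, m_H(r)\Lambda_R$, so $m_H|_R = m_R$, which is (1).

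Item (2) follows from the standard formula $\eta(h) = m(h\2)\, S^{-2}(h\1)$ for the Nakayama automorphism of a finite-dimensional Hopf algebra.  Since $R$ is a sub-Hopf algebra of $H$ we have $\cop_H|_R = \cop_R$ and $S_H|_R = S_R$; combined with (1) this gives $\eta_H(r) = m_H(r\2) S_H^{-2}(r\1) = m_R(r\2) S_R^{-2}(r\1) = \eta_R(r)$ for all $r \in R$.  Item (3) is then immediate: the Fischman--Montgomery--Schneider $\beta$-Frobenius structure on $H \supseteq R$ has $\beta = \id_R$ once (1) holds, which is exactly the assertion that $H \supseteq R$ is an ordinary (untwisted) Frobenius extension.

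The main obstacle is the explicit construction of the two-sided $R$-bilinear conditional expectation $E$ and the verification that $c\neq 0$, since separability of a ring extension is in general strictly weaker than splitness of the inclusion as an $(R,R)$-bimodule.  A robust alternative that sidesteps this is to argue abstractly: separability of a $\beta$-Frobenius extension forces the distortion $\beta$ to be an inner automorphism of $R$, and the Fischman--Montgomery--Schneider presentation of $\beta$ as a character-theoretic quantity built from $m_H|_R$ and $m_R$ then forces $\beta$ to be trivial; this establishes (1) and (3) simultaneously, while (2) continues to follow from the Nakayama formula.
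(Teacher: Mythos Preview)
Your deductions of (2) from (1) via the Nakayama formula, and of (3) from (1) via Fischman--Montgomery--Schneider, match the paper's (it records $\beta(r) = r \leftharpoonup m_H * m_R^{-1} = \eta_R(\eta_H^{-1}(r))$, so (1) $\Rightarrow$ $\beta = \id_R$ $\Rightarrow$ (2) and (3)). The divergence is entirely in how (1) is obtained, and there your argument has a real gap while the paper's is a two-line computation.

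The paper does not try to build a bilinear conditional expectation. Instead it uses the element $s$ from the preceding theorem to manufacture a nonzero right integral of $H$ lying in $t_R H$: the isomorphism $Q \stackrel{\sim}{\to} t_R H$ of \cite[Lemma 3.2]{K2014} sends $\overline{s}$ to $t_R s$, and $t_R s\, H^+ \subseteq t_R R^+ H = 0$ shows $t_R s$ is a nonzero right integral of $H$. Taking $t_H := t_R s$, one has for all $r \in R$
\[
m_H(r)\, t_H \;=\; r\, t_H \;=\; (r\, t_R)\, s \;=\; m_R(r)\, t_R s \;=\; m_R(r)\, t_H,
\]
whence $m_H|_R = m_R$. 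No conditional expectation, no $c \neq 0$ issue.

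Your primary route has exactly the obstacle you name: separability splits $H \otimes_R H \to H$ as $H$-bimodules, not $R \hookrightarrow H$ as $R$-bimodules, and the only Frobenius-type map $H \to R$ you have \emph{a priori} is the $\beta$-Frobenius homomorphism, which is $(R,{}_{\beta}R)$-bilinear; using it to produce an $(R,R)$-bilinear $E$ with $E(1)=1$ presupposes $\beta = \id$, which is what you are trying to prove. Your fallback --- that separability of a $\beta$-Frobenius extension forces $\beta$ inner, and that an inner winding automorphism $r \mapsto r \leftharpoonup \chi$ must be trivial --- is not a standard result, and you offer no argument for either step. The paper's integral trick avoids all of this entirely; I would adopt it.
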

\begin{proof}
Suppose $s \in H$ satisfies the conditions of the theorem, $\eps(s) = 1$ and $sH^+ \subseteq R^+H$.
By \cite[Lemma 3.2]{K2014}. the quotient module $$Q \stackrel{\sim}{\longrightarrow} t_R H,$$ which
sends $q = \overline{s} \mapsto t_R s$.  Then $t_RsH^+ \in t_R R^+ H = \{ 0 \}$, i.e., $t_Rs$ is a nonzero integral in $H$.  Without loss of generality, set $t_H = t_R s$.  Then for all $r \in R$,
$$ m_H(r)t_H = rt_H = rt_R s = m_R(r) t_H,$$
from which it follows that $m_H$ restricts on $R$ to the modular function of $R$, $m_R$.  

Recall that finite-dimensional Hopf subalgebra pairs such as $H \supseteq R$ are $\beta$-Frobenius extensions (Fischman-Montgomery-Schneider) with $$\beta(r) = r \leftharpoonup m_H * m_R^{-1} = 
\eta_R(\eta^{-1}_H(r)).$$ See \cite{K} or \cite{SY} for  textbook coverages of the full details. 
Consequently, $\eta_H(r)= \eta_R(r)$, $m_H(r) = m_R(r)$ and $\beta(r) = r$ for all $r \in R$. 
\end{proof}
The hypothesis of semisimplicity that removes the twist in the Frobenius extension of Hopf algebra substantially uncomplicates the associated induction theory. 

\subsection{Depth of Hopf subalgebras from right or left quotient modules}  Let $R \subseteq H$ be a Hopf subalgebra pair where $H$ is finite-dimensional, and $R^+ = \ker \eps \cap R$.  The right quotient $H$-module $Q := H/R^+H$ controls induction of right $H$-module restricted to $R$-modules as follows: $\forall \ M \in \M_H,$
\begin{equation}
\label{eq: inductQ}
M \otimes_R H \stackrel{\cong}{\longrightarrow} M_. \otimes Q_., \ \ m \otimes_R h \mapsto mh\1 \otimes h\2
\end{equation}
with inverse mapping given by $m \otimes \overline{h} \mapsto mS(h\1) \otimes_R h\2$ where $S: H \rightarrow H$ denotes the antipode of $H$.  At the same time, the $k$-dual of the left quotient $H$-module $\mathcal{Q} := H/HR+$ controls the  coinduction of right $H$-modules restricted to $R$-modules in
a somewhat similar way: $\forall \ M \in \M_H,$
\begin{equation}
\label{eq: coinductQ}
M_. \otimes {\mathcal{Q}}^*_. \stackrel{\cong}{\longrightarrow} \Hom (H_R, M_R), \ \ m \otimes q^* \mapsto (h \mapsto mh\1 q^*( \overline{h\2}))
\end{equation}
Both Eqs.~(\ref{eq: inductQ}) and~(\ref{eq: coinductQ}) are first recorded in \cite[Ulbrich]{U}; we use the notation for cosets $\overline{h}$ for both coset spaces 
$Q$ and $\mathcal{Q}$. 

The following is then a consequence of Eqs.~(\ref{eq: inductQ}) and~(\ref{eq: coinductQ}).
As mentioned above, $H \supseteq R$ is always a twisted (``beta'') Frobenius extension, with a twist automorphism $\beta: R \rightarrow R$ given by a relative
modular function or a relative Nakayama automorphism.  
If the twist is trivially the identity on $R$, the Hopf subalgebra is an ordinary Frobenius extension: see subsection~\ref{subsect-tower} of this paper for the definition. This hypothesis on $H \supseteq R$ allows us to prove the following.  
\begin{prop}
If $H \supseteq R$ is a Frobenius extension, then $\mathcal{Q}^* \cong Q$ as right $H$-modules.     
\end{prop}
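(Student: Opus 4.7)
The plan is to exploit the fact that for an ordinary Frobenius extension $R \subseteq H$, the induction functor $\Ind = - \otimes_R H$ and the coinduction functor $\Coind = \Hom(H_R, -_R)$ are naturally isomorphic as functors $\mathcal{M}_R \to \mathcal{M}_H$, and then to specialize to the trivial right $R$-module $M = k_\eps$.

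First, recall that the Frobenius condition with trivial twist is equivalent to an $R$-$H$-bimodule isomorphism $H \cong \Hom(H_R, R_R)$, from which one derives the natural right $H$-module isomorphism $M \otimes_R H \cong \Hom(H_R, M_R)$ for every $M \in \mathcal{M}_R$ (using $H_R$ finitely generated projective). The essential point is that there is no twist by a nontrivial automorphism $\beta$ of $R$, so the isomorphism respects the left $R$-module structure tautologically; this is what distinguishes ordinary Frobenius from the general $\beta$-Frobenius situation that always holds for Hopf subalgebra pairs.

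Next, apply this to $M = k_\eps$. Directly, $k_\eps \otimes_R H$ collapses to $H/R^+H = Q$ as a right $H$-module via $1 \otimes h \mapsto \overline{h}$, matching the identification in Eq.~(\ref{eq: inductQ}) specialized at $M = k_\eps$. Similarly, $\Hom(H_R, k_\eps)$ consists of linear functionals on $H$ that vanish on $HR^+$, which is precisely $\mathcal{Q}^* = (H/HR^+)^*$; the right $H$-action $(f \cdot h)(x) = f(hx)$ coincides with the structure on $\mathcal{Q}^*$ coming from the left $H$-action on $\mathcal{Q}$, and this identification agrees with Eq.~(\ref{eq: coinductQ}) at $M = k_\eps$. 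Stringing the three isomorphisms together yields $Q \cong \mathcal{Q}^*$ as right $H$-modules.

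The main obstacle is just bookkeeping of the various $H$-actions: the right $H$-action on $\Ind$ comes from right multiplication on $H$, whereas the one on $\Coind$ arises from left multiplication on $H$ converted into a right action on the Hom-space, and the Frobenius bimodule isomorphism is precisely what makes these match without an intervening $\beta$. If a more hands-on construction is wanted, the same iso can be written explicitly as $\overline{h} \mapsto \bigl(\overline{g} \mapsto \eps(E(hg))\bigr)$ for a Frobenius homomorphism $E: H \to R$, with well-definedness on either side following from the left- and right-$R$-linearity of $E$, $H$-linearity from associativity, and bijectivity from a dual basis $\{x_i, y_i\}$ satisfying $\sum_i E(hx_i)y_i = h = \sum_i x_i E(y_i h)$; but the functorial route above is cleaner.
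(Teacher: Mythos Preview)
Your argument is correct and follows essentially the same route as the paper: invoke the natural isomorphism $N \otimes_R H \cong \Hom(H_R, N_R)$ characterizing an ordinary Frobenius extension, specialize to $N = k_\eps$, and identify the two sides with $Q$ and $\mathcal{Q}^*$ respectively via Eqs.~(\ref{eq: inductQ}) and~(\ref{eq: coinductQ}). Your added explicit description of the map via a Frobenius homomorphism $E$ and dual bases is a nice supplement but not needed for the proof.
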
 
\begin{proof}
This follows from the characterization of Frobenius extension:  for each right $R$-module $N$,
\begin{equation}
\label{eq: frob}
N \otimes_R H \cong \Hom (H_R, N_R).
\end{equation}
Now apply this and the display equations above to $N = M = k_{\eps}$.
\end{proof}
Recall that $H$ and $R$ are Frobenius algebras: let $A$ be any Frobenius algebra. Then there are one-to-one correspondences of right
ideals with left ideals of  $A$ via the correspondence $I \mapsto \ell(I) :=  \{ a \in A: \ aI = 0 \}$ for
every right ideal $I $ of $A$, and inverse correspondence $J \mapsto r(J) := \{ a \in A : \ Ja = 0 \}$ for every left ideal $J$ of $A$.  The following comes from the basic fact that $\ell(I) \cong \Hom ((A/I)_A, A_A)$
and $r(J) \cong \Hom ({}_A(A/J), {}_AA)$.  See \cite[Lam II]{Lam}.  
\begin{prop}
Let $t_R$ denote a nonzero right integral in $R$, a Hopf subalgebra of $H$ as above.  Then
$\ell(R^+H) = Ht_R$,  $$\Hom ({}_H(H/Ht_R), {}_HH) \cong R^+H$$
and $\Hom (Q_H, H_H) \cong Ht_R$.  If $H$ is a symmetric algebra, the $k$-duals $Q^* \cong Ht_R$
and $\mathcal{Q}^* \cong t_R H$. 
\end{prop}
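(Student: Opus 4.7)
The plan is to derive all four assertions from two ingredients already set up in the text: the Frobenius-algebra identifications $\ell(I)\cong\Hom((A/I)_A,A_A)$ and $r(J)\cong\Hom({}_A(A/J),{}_AA)$ (applied with $A=H$), and the isomorphism $Q\cong t_RH$ of \cite[Lemma 3.2]{K2014} recalled earlier in this section.

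To establish $\ell(R^+H)=Ht_R$, I would first verify the inclusion $Ht_R\subseteq\ell(R^+H)$ by the direct computation $(ht_R)(rh')=h(t_Rr)h'=h\,\eps(r)\,t_Rh'=0$ for $r\in R^+$. The reverse inclusion comes from a dimension count. Since $H$ is Frobenius, $\dim\ell(R^+H)=\dim H-\dim R^+H=\dim Q$. To match this with $\dim Ht_R$, I would invoke the Frobenius-algebra fact that the principal left and right ideals $Ax$ and $xA$ of any $x\in A$ have the same dimension: indeed, the nondegenerate associative pairing $\langle a,b\rangle=\lambda(ab)$ identifies $\ell(xA)$ with $\{a:ax=0\}$, whence $\dim Ax=\dim xA$. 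Applied to $x=t_R$ and combined with $Q\cong t_RH$, this yields $\dim Ht_R=\dim t_RH=\dim Q$, and equality follows.

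The two Hom-identities are then almost formal. Applying the framework to $J=Ht_R$ gives $\Hom({}_H(H/Ht_R),{}_HH)\cong r(Ht_R)$; I would identify $r(Ht_R)=R^+H$ by the same $t_R\cdot R^+=0$ computation (forcing $R^+H\subseteq r(Ht_R)$) together with the dimension count $\dim r(Ht_R)=\dim H-\dim Ht_R=\dim R^+H$. Applying the framework to $I=R^+H$ gives $\Hom(Q_H,H_H)\cong\ell(R^+H)$, which by the first step is $Ht_R$.

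Finally, when $H$ is symmetric one has $H\cong H^*$ as $H$-bimodules, so for any right $H$-module $M$ there is a natural left-$H$-linear isomorphism $\Hom(M_H,H_H)\cong\Hom(M_H,H^*_H)\cong M^*$; hence $Q^*\cong Ht_R$ by the previous step. The claim $\mathcal{Q}^*\cong t_RH$ is the left--right mirror: $\mathcal{Q}^*\cong\Hom({}_H\mathcal{Q},{}_HH)\cong r(HR^+)$ by the same Hom identification, and $r(HR^+)=t_RH$ by running the first paragraph verbatim with $HR^+$ in place of $R^+H$ and the integral acting on the opposite side of $R$. The main technical hurdle is the Frobenius-algebra equality $\dim Ht_R=\dim t_RH$ in the first step, which relies on the nondegenerate pairing on $H$ and would fail for a generic finite-dimensional algebra; once this is combined with the cited $Q\cong t_RH$, every other step reduces to annihilator and Hom bookkeeping.
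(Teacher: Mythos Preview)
Your proposal is correct and follows essentially the same strategy as the paper: an easy inclusion $Ht_R\subseteq\ell(R^+H)$, a dimension count to upgrade it to equality, the Frobenius-algebra identifications $\ell(I)\cong\Hom((A/I)_A,A_A)$ and $r(J)\cong\Hom({}_A(A/J),{}_AA)$ for the two Hom statements, and the symmetric-algebra isomorphism $\Hom(M_A,A_A)\cong M^*$ for the dual claims. The only cosmetic differences are that the paper obtains $\dim Ht_R=\dim Q$ by invoking the left-side analogue $Ht_R\cong\mathcal{Q}$ of the cited isomorphism $t_RH\cong Q$ (rather than your general Frobenius fact $\dim Ax=\dim xA$), and it reads off $r(Ht_R)=R^+H$ from the duality $r(\ell(I))=I$ instead of repeating the inclusion-plus-dimension argument.
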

\begin{proof}
Note that $Ht_R R^+H = 0$.  From  \cite[3.2]{K2014} $Q \cong t_R H$ and $\dim Q = \dim H / \dim R$. By definition of $Q$, $\dim Q = \dim H - \dim R^+H$;  similarly
$$\dim Ht_R = \dim \mathcal{Q} = \dim H / \dim R.$$  For a Frobenius algebra $A$,
we know that $\dim \ell(I) = \dim A - \dim I$ \cite{Lam}.  Setting $A = H$, it follows from dimensionality that
$Ht_R = \ell(R^+H)$.  The next two isomorphisms are  applications of $r(\ell(I) = I$ and $\ell(r(J) = J$.
 The last statement follows from $$\Hom (M_A, A_A) \cong M^*$$
 as left $A$-modules, for every $A$-module $M$, for a symmetric algebra $A$ (and a similar statement for left $A$-modules, see \cite{Lam}).   
\end{proof} 

The equivalent problems in Section~1 have a third equivalent formulation based on elementary considerations using Eq.~(\ref{eq: oplus}):

\begin{problem}
Is there an $n \in \N$ such that the composition $$\Hom (Q^{\otimes (n)}, Q^{\otimes (n+1)}) \otimes_{\End Q^{\otimes (n)}} \Hom ( Q^{\otimes (n+1)}, Q^{\otimes (n)}) \longrightarrow \End Q^{\otimes (n+1)} $$
is surjective?
\end{problem}
Either $R$-modules or $H$-modules suffice above.  If we assume that $H \supseteq R$ is an ordinary Frobenius extension however, the following interesting isomorphisms of Hom-groups over $H$ exist.
Note that for any $H$-module $M$, there is a subring pair $\End M_H \subseteq \End M_R$.  
\begin{prop}
There are $\End Q^{\otimes (n)}_H := E$-module isomorphisms,
$$ \Hom (Q^{\otimes (n)}_H, Q^{\otimes (n+1)}_H) \cong \End Q^{\otimes (n)}_R \cong
\Hom  ( Q^{\otimes (n+1)}_H, Q^{\otimes (n)}_H)$$
(right and left $E$-modules respectively). 
\end{prop}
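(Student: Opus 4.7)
The plan is to identify $Q^{\otimes (n+1)}$ with the induced module and the coinduced module (from $R$ to $H$) of $Q^{\otimes n}$, and then invoke the two standard adjunctions for restriction.

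First, for the right-hand isomorphism $\End Q^{\otimes n}_R \cong \Hom(Q^{\otimes (n+1)}_H, Q^{\otimes n}_H)$, I would apply Eq.~(\ref{eq: inductQ}) with $M = Q^{\otimes n}$ to obtain the $H$-module isomorphism $Q^{\otimes (n+1)} = Q^{\otimes n} \otimes Q \cong Q^{\otimes n} \otimes_R H$. Substituting this into $\Hom_H(Q^{\otimes (n+1)}, Q^{\otimes n})$ and applying the standard induction-restriction adjunction (induction $-\otimes_R H$ is left adjoint to restriction, which holds for any ring extension) yields $\Hom_H(Q^{\otimes n}\otimes_R H, Q^{\otimes n}) \cong \Hom_R(Q^{\otimes n}, Q^{\otimes n}) = \End Q^{\otimes n}_R$. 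This requires no Frobenius hypothesis.

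Second, for the left-hand isomorphism $\Hom(Q^{\otimes n}_H, Q^{\otimes (n+1)}_H) \cong \End Q^{\otimes n}_R$, I would use the Frobenius hypothesis to identify $Q^{\otimes (n+1)}$ with the coinduced module $\Hom(H_R, Q^{\otimes n}_R)$. Explicitly, Eq.~(\ref{eq: coinductQ}) gives $\Hom(H_R, Q^{\otimes n}_R) \cong Q^{\otimes n} \otimes \mathcal{Q}^*$; since $H \supseteq R$ is assumed to be an ordinary Frobenius extension, the previous proposition provides the $H$-module isomorphism $\mathcal{Q}^* \cong Q$, so this becomes $Q^{\otimes n} \otimes Q = Q^{\otimes (n+1)}$. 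Now the Hom-tensor adjunction (restriction left adjoint to coinduction, which holds for any ring extension) gives $\Hom_H(Q^{\otimes n}, \Hom(H_R, Q^{\otimes n}_R)) \cong \Hom_R(Q^{\otimes n}, Q^{\otimes n}) = \End Q^{\otimes n}_R$. Equivalently, one can argue more uniformly: for a Frobenius extension induction and coinduction coincide, so restriction has the same left and right adjoint, producing both isomorphisms at once.

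For the $E$-module structures, $E := \End Q^{\otimes n}_H$ sits inside $\End Q^{\otimes n}_R$ as a subring, and this inclusion gives $\End Q^{\otimes n}_R$ a right $E$-action by precomposition and a left $E$-action by postcomposition. On the Hom-groups, $E$ acts on $\Hom(Q^{\otimes n}_H, Q^{\otimes (n+1)}_H)$ by precomposition (right action) and on $\Hom(Q^{\otimes (n+1)}_H, Q^{\otimes n}_H)$ by postcomposition (left action). Since both adjunction isomorphisms are natural in the first variable, they intertwine precomposition by $E$; since they are natural in the second variable, they intertwine postcomposition by $E$. Thus the constructed isomorphisms are $E$-equivariant as stated.

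The main obstacle is simply bookkeeping: tracing through the composite natural isomorphism (Eq.~(\ref{eq: inductQ}) or Eq.~(\ref{eq: coinductQ}) followed by the adjunction unit/counit) to confirm the $E$-equivariance. The algebraic content is already supplied by the two displayed isomorphisms and by the Frobenius identification $\mathcal{Q}^* \cong Q$; everything else is formal category theory.
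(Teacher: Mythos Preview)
Your proposal is correct and follows essentially the same route as the paper: both use Eq.~(\ref{eq: inductQ}) together with the hom-tensor adjunction for the second isomorphism, and both invoke the Frobenius property (induction $\cong$ coinduction) plus the hom-tensor adjunction for the first. The only cosmetic difference is that the paper states the Frobenius fact directly as $\Hom(M_H, N\otimes_R H_H)\cong \Hom(M_R,N_R)$ (via $N\otimes_R H\cong \Hom(H_R,N_R)$) rather than routing through the previous proposition $\mathcal{Q}^*\cong Q$, but your alternative ``more uniform'' remark is exactly the paper's argument.
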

\begin{proof}
The second isomorphism follows from Eq.~(\ref{eq: inductQ}) and the hom-tensor adjoint isomorphism \cite[20.6]{AF}.  The first isomorphism requires additionally the  fact for any Frobenius extension $H \supseteq R$ with modules
$M_H$ and $N_R$: 
\begin{equation}
\label{eq: factoid}
\Hom (M_H, N \otimes_R H_H) \cong \Hom (M_R, N_R)
\end{equation}
which follows from a natural isomorphism $\Hom (H_R, N_R) \cong N \otimes_R H$ as right $H$-modules, and the hom-tensor adjoint isomorphism.  
\end{proof}
It is worth remarking that the tensor powers of $Q$ are also $H$-module coalgebra quotients, since they are pullbacks via $\cop^n: H \rightarrow H^{\otimes (n)}$ of the quotient module of the Hopf subalgebra
pair $R^{\otimes (n)} \subseteq H^{\otimes (n)}$, which is isomorphic to $Q^{\otimes (n)}$
as $H^{\otimes (n)}$-modules. 
\subsection{Core Hopf ideals of a Hopf subalgebra pair}
Let $R \subseteq H$ be a finite-dimensional Hopf subalgebra pair.  
We continue the study begun in \cite{HKY} relating the depth of a quotient module $Q$ to its descending chain of annihilator ideals of its tensor powers:
\begin{equation}
\Ann Q \supseteq \Ann (Q \otimes Q) \supseteq \cdots \supseteq \Ann Q^{\otimes (n)} \supseteq \cdots.
\end{equation}
The chain of ideals are either contained in $R^+$ or $H^+$ depending on whether $Q$ is considered an $R$-module or $H$-module (as in Corollary~\ref{cor-cue}). By classical theory recapitulated in \cite[Section~4]{HKY}, for some $n \in \N$
we have $\Ann Q^{\otimes (n)} = \Ann Q^{\otimes (n+m)}$ for all integers
$m \geq 1$:  this ideal $I$ is a Hopf ideal, indeed the maximal Hopf ideal contained
in $\Ann Q$.   Let $\ell_Q$ denote the least $n$ for which this stabilization of the descending chain of annihilator ideals takes place; call $\ell_Q$ the \textit{length} of the annihilator chain of tensor powers of the quotient module.  This may be nuanced by
$\ell_{Q_R}$ or $\ell_{Q_H}$ depending on which module $Q$ is being considered: since for any module $M_H$ we have 
$\Ann M_R = \Ann M_H \cap R$, it follows that
\begin{equation} 
\ell_{Q_R} \leq \ell_{Q_H}.
\end{equation}

Let $S_1, \ldots, S_t$ be the simple composition factors of $Q$ or one of its tensor powers; by elementary considerations with the composition series of $Q^{\otimes i}$, we note that
\begin{equation}
I \subseteq \cap_{j=1}^t \Ann S_j,
\end{equation}
 in particular, if some $Q^{\otimes i}$ contains all simples (of $R$ or $H$), $I \subseteq J_{\omega}$,
the (Chen-Hiss \cite{CH}) Hopf radical ideal, since $J_{\omega}$ is the maximal Hopf ideal in the radical which is the intersection of the annihilator ideals of all simples.  If one simple is projective, the corresponding $J_{\omega} = 0$ by a result in \cite{CH}, whence $Q$ is conditionally faithful, i.e.,
$Q^{\otimes (n)}$ is faithful for some $n \in \N$  \cite{HKY}.   

Recall that the \textit{core} of a subgroup $U \leq G$ is $N := \cap_{g \in G}\,  gUg^{-1}$, and is the maximal
normal subgroup of $G$ contained in $U$.  
\begin{prop}
Suppose $H$ is a group algebra $kG$ and $R$ is a group algebra $kU$, where $U \leq G$ is a subgroup pair.  Then $I$ is determined by the core $N$ as follows: $I_H = kN^+ H$ and $I_R = kN^+ R$.
\end{prop}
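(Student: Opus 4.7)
The plan is to combine two standard facts: (i) Hopf ideals of a group algebra are in bijective correspondence with normal subgroups, and (ii) the quotient module $Q=H/R^+H$ in the group algebra setting is the permutation module on the right cosets. I will then identify the maximal Hopf ideal contained in $\Ann Q$ via the definition of the core $N$, in both the $H$-module and $R$-module case.

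First, I would observe that $R^+H$ has $k$-basis $\{(u-1)g : u\in U\setminus\{1\},\, g\in G\}$, so
\[
Q \;=\; kG/R^+H \;\cong\; k[U\backslash G]
\]
as a right $kG$-module, where $\overline{g}$ corresponds to the coset $Ug$ and the right action is $Ug\cdot h = Ugh$. Recall also that Hopf ideals of $kG$ are exactly the kernels of the algebra maps $kG\to k(G/M)$ arising from normal subgroups $M\trianglelefteq G$; each such Hopf ideal is $kM^+\cdot kG = kM^+H$, and the correspondence $M\leftrightarrow kM^+H$ is inclusion-preserving. The analogous statement holds for $kU$ with normal subgroups of $U$.

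Now, for a normal subgroup $M\trianglelefteq G$, the element $(m-1)g\in kM^+H$ acts on $Ug'$ as $Ug'mg - Ug'g$, which vanishes for every $g'\in G$ iff $g'mg'^{-1}\in U$ for all $g'\in G$, i.e.\ iff $m\in\bigcap_{g'\in G}g'Ug'^{-1} = N$. Hence $kM^+H\subseteq\Ann_H Q$ iff $M\subseteq N$. Since $N$ itself is normal in $G$ and contained in $U$, the largest normal subgroup of $G$ whose associated Hopf ideal lies in $\Ann_HQ$ is $N$ itself, giving $I_H = kN^+H$. For the $R$-module annihilator, one replaces $M\trianglelefteq G$ by $M\trianglelefteq U$ in the same computation: $kM^+R\subseteq \Ann_R Q$ iff every $m\in M$ fixes every coset $Ug$, iff $M\subseteq N$. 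Because $N\trianglelefteq G$ implies $N\trianglelefteq U$, the maximal such $M$ is again $N$, yielding $I_R=kN^+R$.

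Finally, to match this with $I$ as defined in the excerpt via the stabilized annihilator chain, I would invoke the statement (recalled just before the proposition) that $I$ is precisely the maximal Hopf ideal contained in $\Ann Q$; the two computations above then identify it with $kN^+H$ and $kN^+R$ respectively. The only subtle point, and probably the main obstacle, is verifying the bijection between Hopf ideals of $kG$ (resp.\ $kU$) and normal subgroups in a sufficiently self-contained way; but this is classical, so the argument should be quite short.
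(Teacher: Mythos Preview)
Your proposal is correct and follows essentially the same approach as the paper: both invoke the classical bijection between Hopf ideals of a group algebra and normal subgroups (the paper cites Passman--Quinn \cite{PQ} for this), and both determine which normal subgroups $M$ give $kM^+H\subseteq\Ann Q$ by checking that $1-m$ annihilates each coset $Ug$ iff $m\in N$. The paper organizes this as ``$kN^+H\subseteq I$, and conversely $I=k\tilde N^+H$ with $\tilde N\subseteq U$, hence $\tilde N=N$ by maximality,'' while you phrase it as ``$kM^+H\subseteq\Ann Q$ iff $M\subseteq N$, so the maximal one is $kN^+H$''; these are the same argument. Your treatment is slightly more explicit in that you spell out the $I_R$ case separately (Hopf ideals of $kU$ corresponding to $M\trianglelefteq U$, with the same annihilation condition forcing $M\subseteq N$), whereas the paper's proof only writes out the $I_H$ case and leaves $I_R$ implicit.
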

\begin{proof}
Note that $kN^+H = H kN^+$ is a Hopf ideal since $N$ is normal in $G$.  An arbitrary element in $Q$
is the coset $Ug$ annihilated by $1 - n$ for any $n \in N$, since $N \subseteq U$.  Then $KN^+H \subseteq I$, since $I$ is maximal Hopf ideal in the annihilator of $Q$.  Conversely, the Hopf ideal 
$I = k\tilde{N}^+H$ for some normal subgroup  $\tilde{N} \triangleleft G$ by a result in \cite{PQ}. 
Since $1 - \tilde{n}$ annihilates each $Ug$, it follows that $\tilde{N} \subseteq U$, whence
$\tilde{N} = N$ by maximality.   
\end{proof}
Due to the proposition, we propose calling the pair of Hopf ideals
$I = \Ann Q^{\otimes \ell_{Q_H}}$ and $I \cap R = \Ann Q^{\otimes \ell_{Q_R}}$ the \textit{core Hopf ideals} of the Hopf subalgebra $R \subseteq H$.

Note that \cite[Prop.\ 4.3]{HKY} is equivalent to the inequality 
\begin{equation}
\label{ineq1}
2 \ell_{Q_R} +1 < d_{even}(R,H),
\end{equation}
true without further conditions on $H$ and $R$, since the even depth of $Q$, determined from similarity of tensor powers of $Q$ as $R$-modules, results in
equal annihilator ideals: see the first statement in  Proposition~\ref{prop-gem}. 
Similarly, considering the $H$-module $Q$ and h-depth instead, we note that
\begin{equation}
\label{ineq2}
2 \ell_{Q_H} + 1 \leq d_h(R,H)
\end{equation}
Now we make use of the second statement in Proposition~\ref{prop-gem}:

\begin{theorem}
\label{th-precise}
Suppose $R$ is a semisimple Hopf algebra, then $$d_{even}(R,H) = 2 \ell_{Q_R} +2.$$ If moreover $H$ is semisimple, then $d_h(R,H) = 2 \ell_{Q_H} +1$.
 \end{theorem}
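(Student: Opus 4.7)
The plan is to invoke Corollary~\ref{cor-cue} together with Proposition~\ref{prop-gem} to identify the coalgebra depth with the length of the annihilator chain. For the first statement, Corollary~\ref{cor-cue} gives $d_{even}(R,H) = d(Q,\M_R) + 1$, so it suffices to prove $d(Q,\M_R) = 2\ell_{Q_R} + 1$. Unpacking Definition~\ref{def-depthtens}, this coalgebra depth equals $2m + 1$ where $m \geq 0$ is the least integer such that $Q^{\otimes m} \sim Q^{\otimes (m+1)}$ in $\M_R$ (with $Q^{\otimes 0} := k_\eps$); the reverse divisibility $Q^{\otimes m} \| Q^{\otimes (m+1)}$ is automatic from the coproduct of the $R$-module coalgebra $Q$.

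The key step uses the semisimplicity of $R$ through Proposition~\ref{prop-gem}: finitely generated $R$-modules are similar if and only if they share the same annihilator. Hence $Q^{\otimes m} \sim Q^{\otimes (m+1)}$ in $\M_R$ if and only if $\Ann_R Q^{\otimes m} = \Ann_R Q^{\otimes (m+1)}$. To match the least such $m$ with $\ell_{Q_R}$, I must check that a single instance of consecutive equality forces stabilization of the entire tail of the chain. Suppose $\Ann_R Q^{\otimes m} = \Ann_R Q^{\otimes (m+1)}$, so $Q^{\otimes m} \sim Q^{\otimes (m+1)}$. Because the bifunctor $-\otimes Q$ on $\M_R$ is additive, it preserves direct summands and hence similarities; iterating, $Q^{\otimes (m+j)} \sim Q^{\otimes (m+j+1)}$ for every $j \geq 0$. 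The forward direction of Proposition~\ref{prop-gem} then returns $\Ann_R Q^{\otimes (m+j)} = \Ann_R Q^{\otimes (m+j+1)}$ for all $j$, so the descending chain is indeed constant from position $m$ onward. By minimality, $m = \ell_{Q_R}$.

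Combining these reductions gives $d(Q,\M_R) = 2\ell_{Q_R} + 1$, and therefore $d_{even}(R,H) = 2\ell_{Q_R} + 2$. The second statement follows from the identical argument with $\M_R$ replaced by $\M_H$ and the semisimple Hopf algebra now being $H$: Corollary~\ref{cor-cue} provides $d_h(R,H) = d(Q,\M_H)$, and Proposition~\ref{prop-gem} applied to $H$-modules yields $d(Q,\M_H) = 2\ell_{Q_H} + 1$. The most delicate bookkeeping point is the tensor-preserves-similarity step; it is harmless because the diagonal $R$-action on $Q^{\otimes n}$ obtained from $\cop_R$ agrees with the restriction of the diagonal $H$-action, $R$ being a Hopf subalgebra of $H$.
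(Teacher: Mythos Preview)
Your proof is correct and follows essentially the same approach as the paper: both arguments combine Corollary~\ref{cor-cue} with the two directions of Proposition~\ref{prop-gem} to identify the least $m$ with $Q^{\otimes m}\sim Q^{\otimes(m+1)}$ and the length $\ell_Q$. The paper packages the lower bound via the already-recorded inequalities~(\ref{ineq1}) and~(\ref{ineq2}), whereas you prove the equality $d(Q,\M_R)=2\ell_{Q_R}+1$ directly and supply the extra bookkeeping step (tensoring by $Q$ preserves similarity, so one consecutive equality of annihilators forces stabilization of the tail) that the paper leaves implicit.
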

\begin{proof}
Semisimple rings satisfy the equal-annihilator-similar-module condition in Proposition~\ref{prop-gem}. The definition~\ref{def-depthtens} of depth
of $Q$ depends on similarity of tensor powers of $Q$ and involves a rescaling of 1 plus a factor of $2$ with respect  to $\ell_Q$.  The rest follows from the inequalities~(\ref{ineq1}) and~(\ref{ineq2}); see also \cite[Theorem 5.1]{K2014} for 
$d_{even}(R,H) = d(Q, \M_R) + 1$.    
\end{proof}

For a semisimple Hopf subalgebra pair, also note the equalities that follow from Def.~\ref{def-depthtens} and Prop.~\ref{prop-gem}: 

\begin{eqnarray}
d(Q, \M_H) & =  & 2 \ell_{Q_H} + 1 \\
d(Q, \M_R) &  = & 2 \ell_{Q_R} + 1.
\end{eqnarray}

For semisimple Hopf algebra-subalgebra pairs, these formulas put the length $\ell_Q$ of the annihilator chain of tensor powers of $Q$ in close relation to diameter of same colored points in the bicolored graph \cite{BKK} as well as the base size or minimal number of ``conjugates'' of the Hopf subalgebra intersecting in the core,
cf. \cite{FKR, BKK}.  

    A general finite-dimensional Hopf subalgebra pair $R \subseteq H$ may sometimes reduce to the hypothesis of the  previous theorem via the following proposition, which extends \cite[Corollary 4.13]{HKS} from the core of a subgroup-group algebra pair.  

\begin{prop}
Suppose $I$ denotes the maximal Hopf ideal in the annihilator ideal of $Q = H/R^+H$; let $J = R \cap I$ denote the restricted Hopf ideal in $R$.  Then h-depth $d_h(R,H) = d_h(R/J, H/I)$.  Similarly,
minimum even depth satisfies $d_{even}(R,H) = d_{even}(R/J, H/I)$.  
\end{prop}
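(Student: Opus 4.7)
The plan is to reduce both equalities to an application of Corollary~\ref{cor-cue}, by showing that the Hopf subalgebra pair $R/J \subseteq H/I$ has the \emph{same} quotient module $Q$ (viewed with the natural $H/I$-action), and that depth of $Q$ is unchanged when passed between the categories $\M_H$ and $\M_{H/I}$ (resp.\ $\M_R$ and $\M_{R/J}$).

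First I would verify the setup. Since $I$ is a Hopf ideal in $H$, the quotient $H/I$ is a Hopf algebra, and since $R$ is a Hopf subalgebra of $H$, a standard computation using $\cop_H(J) \subseteq \cop_H(I) \cap (R \otimes R) \subseteq (I \otimes H + H \otimes I) \cap (R \otimes R) = J \otimes R + R \otimes J$ (the last equality by Nichols--Zoeller freeness of $R$ in $H$) shows $J = R \cap I$ is a Hopf ideal of $R$. The induced map $R/J \hookrightarrow H/I$ thus exhibits $R/J$ as a Hopf subalgebra of $H/I$. Since $I \subseteq \Ann Q$ and $\overline{1_H}$ generates $Q$, we have $I \cdot 1_H \subseteq R^+H$, whence $I \subseteq R^+H$. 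Therefore the quotient module of the pair $R/J \subseteq H/I$ is
\[ (H/I)/(R/J)^+(H/I) \;=\; H/(R^+H + I) \;=\; H/R^+H \;=\; Q, \]
with the natural $H/I$-action inherited from $H$.

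Next I would argue that depth is preserved under the passage. By hypothesis $Q \cdot I = 0$, so $Q \in \M_{H/I}$; and because $I$ is a Hopf ideal, its coproduct lies in $I \otimes H + H \otimes I$, so the diagonal action $(q_1 \otimes q_2) \cdot h = q_1 h\1 \otimes q_2 h\2$ shows inductively that every tensor power $Q^{\otimes (n)}$ is annihilated by $I$ and is thus an object of $\M_{H/I}$. The forgetful functor $\M_{H/I} \to \M_H$ is a fully faithful tensor embedding whose image contains all $Q^{\otimes (n)}$, so the lattice of $H$-submodules of $Q^{\otimes (n)}$ coincides with its lattice of $H/I$-submodules. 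Consequently the similarity relation $Q^{\otimes (n)} \sim Q^{\otimes (n+1)}$ holds in $\M_H$ iff it holds in $\M_{H/I}$, giving $d(Q, \M_H) = d(Q, \M_{H/I})$. The same reasoning applied to $R$ and $J \subseteq R$ yields $d(Q, \M_R) = d(Q, \M_{R/J})$ (here one uses that $J$ is a Hopf ideal in $R$ to see tensor powers of $Q$ descend to $\M_{R/J}$).

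Finally I would combine these observations with Corollary~\ref{cor-cue}. For h-depth,
\[ d_h(R,H) \;=\; d(Q, \M_H) \;=\; d(Q, \M_{H/I}) \;=\; d_h(R/J, H/I), \]
and for even depth,
\[ d_{even}(R,H) \;=\; d(Q, \M_R) + 1 \;=\; d(Q, \M_{R/J}) + 1 \;=\; d_{even}(R/J, H/I). \]
I expect the only genuinely delicate point to be the identification $(I \otimes H + H \otimes I) \cap (R \otimes R) = J \otimes R + R \otimes J$, which requires the Nichols--Zoeller freeness of $H$ over $R$ (so that $R$ is a direct summand of $H$ as a vector space); everything else is a clean bookkeeping argument in the tensor-categorical language already set up in the paper.
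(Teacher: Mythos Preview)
Your proposal is correct and follows essentially the same route as the paper: reduce to Corollary~\ref{cor-cue}, identify the quotient module of $R/J \subseteq H/I$ with $Q$ via a Noether isomorphism, and invoke the fact that depth of $Q$ is unchanged under the fully faithful inflation $\M_{H/I} \hookrightarrow \M_H$ (the paper cites \cite[Lemma~1.5]{HKS} for this last step, which you spell out directly). One small remark: your appeal to Nichols--Zoeller for the equality $(I \otimes H + H \otimes I) \cap (R \otimes R) = J \otimes R + R \otimes J$ is unnecessary, since this already follows from the injectivity of $R/J \hookrightarrow H/I$ (apply the quotient map $\pi \otimes \pi$ and use that its kernel on $R \otimes R$ is $J \otimes R + R \otimes J$).
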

\begin{proof}
Note that $d_h(R,H) = d(Q, \M_H) $ by Corollary~\ref{cor-cue}, and $d(Q, \M_H) = d(Q, \M_{H/I})$
by \cite[Lemma 1.5]{HKS}. Note that $R/J \into H/I$ is a Hopf subalgebra pair with quotient module
isomorphic to $Q$ by a Noether isomorphism theorem.  Then $d_h(R/J, H/I) = d(Q, \M_{H/I})$.   
\end{proof}

\subsection{Quotient module for the permutation group series}
It is interesting at this point to compute the quotient module $Q$ for the inclusion $\C S_n \subseteq \C S_{n+1}$
of permutation group algebras.  Notice that the proposition below implies that the character $\chi_Q = \chi_1 + \chi_t$, where $\chi_1$ is the principal character and $\chi_t$ is the  character of the standard irreducible representation $(n,1)$.  
\begin{prop}
The quotient module $Q = \C [S_n / S_{n+1}]$ is isomorphic to the standard representation of $S_{n+1}$ on $\C^{n+1}$. 
\end{prop}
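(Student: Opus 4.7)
The plan is to identify $Q$ with an induced representation and then use the orbit–stabilizer correspondence to realize that induced representation concretely as the permutation module on $n+1$ letters.

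First, I would observe that for any unital ring extension $R\subseteq H$, the defining relations of $R^+H$ show that
\[
Q = H/R^+H \;\cong\; k \otimes_R H
\]
as right $H$-modules, via $\overline{h}\mapsto 1\otimes_R h$. Specializing to $R = \C S_n$, $H = \C S_{n+1}$, this says $Q \cong \Ind_{S_n}^{S_{n+1}} \mathbf{1}$, the representation induced from the trivial character. Picking coset representatives $g_1,\ldots,g_{n+1}$ for the right cosets $S_n\backslash S_{n+1}$, the elements $1\otimes_R g_i$ form a $\C$-basis of $Q$, and $(1\otimes_R g_i)\cdot\sigma = 1\otimes_R g_j$ whenever $g_i\sigma \in S_n g_j$. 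So $Q$ is canonically the permutation module $\C[S_n\backslash S_{n+1}]$.

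Next, I would realize $S_n$ as the stabilizer subgroup of the point $n+1$ under the standard action of $S_{n+1}$ on $\{1,2,\ldots,n+1\}$. By orbit–stabilizer, the map
\[
\varphi: S_n\backslash S_{n+1} \longrightarrow \{1,\ldots,n+1\}, \qquad S_n g \longmapsto g^{-1}(n+1)
\]
is a well-defined bijection: $S_n g = S_n g'$ iff $g'g^{-1}$ fixes $n+1$ iff $g^{-1}(n+1)=g'^{-1}(n+1)$. For equivariance under the right action by $\sigma\in S_{n+1}$,
\[
\varphi\bigl((S_n g)\cdot\sigma\bigr) = (g\sigma)^{-1}(n+1) = \sigma^{-1}\bigl(g^{-1}(n+1)\bigr) = \sigma^{-1}\bigl(\varphi(S_n g)\bigr),
\]
which is precisely the standard right action of $S_{n+1}$ on $\{1,\ldots,n+1\}$ (the contragredient of the usual left action). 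Extending $\varphi$ linearly thus yields an isomorphism $Q \cong \C^{n+1}$ of right $\C S_{n+1}$-modules.

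There is no real obstacle here; the only point requiring care is consistently tracking right (versus left) actions so that the orbit–stabilizer bijection really intertwines the module structures. Once that bookkeeping is done, the character decomposition $\chi_Q = \chi_1 + \chi_t$ mentioned in the text follows from the standard decomposition $\C^{n+1}\cong \mathbf{1}\oplus(n,1)$ of the permutation representation into the trivial summand $\C\cdot(1,1,\ldots,1)$ and its orthogonal complement, the irreducible standard module of dimension $n$.
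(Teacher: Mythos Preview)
Your argument is correct. The identification $Q \cong k\otimes_R H \cong \Ind_{S_n}^{S_{n+1}}\mathbf{1}$ is exactly the general isomorphism the paper records elsewhere, and the orbit--stabilizer bijection $S_n g \mapsto g^{-1}(n+1)$ does intertwine the right $S_{n+1}$-actions as you checked.

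However, this is not the route the paper's main proof takes. The paper argues by explicit coordinates: it fixes the Artin generators $\sigma_i=(i\ \ i{+}1)$, writes down an ordered basis of $Q$ consisting of the cosets $S_n\sigma_n\sigma_{n-1}\cdots\sigma_1,\ \ldots,\ S_n\sigma_n,\ S_n$, sends these to $e_1,\ldots,e_{n+1}$, and then verifies generator-by-generator, using the braid relations and $\sigma_i^2=1$, that each $\sigma_i$ swaps the appropriate pair of cosets and fixes the rest. Your approach, by contrast, packages all of that combinatorics into the single observation that $S_n$ is the stabilizer of a point, so the coset space \emph{is} the permutation set. The paper does acknowledge this shortcut in a parenthetical remark (``A second proof follows from $Q\cong \Ind^{S_{n+1}}_{S_n}\mathbf{1}$ and Young diagram branching rule of adding a box''), but you have actually made it cleaner: you use orbit--stabilizer to get the permutation module on the nose, whereas invoking the branching rule only gives the irreducible constituents $(n{+}1)\oplus(n,1)$ and one would still have to argue that this sum is the natural permutation module. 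What your approach buys is generality (it works verbatim for any point stabilizer in any transitive action) and brevity; what the paper's explicit approach buys is a concrete matching of bases that makes the isomorphism completely hands-on.
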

\begin{proof}
Recall the Artin presentation of $S_{n+1}$ with generators $\sigma_i = (i \  \ i+1)$ for $i = 1,\ldots,n$ and relations $$\sigma_i \sigma_{i+1} \sigma_i = \sigma_{i+1} \sigma_i \sigma_{i+1},\  \sigma_i \sigma_j = \sigma_j \sigma_i,\  \sigma_i^2 = 1$$
for all $| i - j | \geq 2$.  Note that $\sigma_1,\ldots,\sigma_{n-1} \in S_n$.  
An ordered basis for $Q$ is given by $$\bra S_n \sigma_n \sigma_{n-1} \cdots \sigma_1,\  S_n\sigma_n\cdots \sigma_2,\ \ldots,\ S_n \sigma_n,\  S_n \ket$$
This ordered basis maps onto the ordered basis $\bra e_1,\ldots,e_{n+1}\ket$ of the $S_{n+1}$ representation space $\C^{n+1}$ via the canonical order-preserving mapping. This mapping is an $S_{n+1}$-module
isomorphism, since $\sigma_i$ exchanges $e_i$ and $e_{i+1}$ as it does $S_n\sigma_n \cdots \sigma_i$ and $S_n\sigma_n\cdots\sigma_{i+1}$, respectively, (here we use $\sigma_i^2 = 1$), and it leaves the other basis elements fixed, since $\sigma_i$ commutes with $\sigma_{i+2}$ and/or $\sigma_{i-2}$ (here we also use $\sigma_i \sigma_{i-1} \sigma_i = \sigma_{i-1} \sigma_i \sigma_{i-1}$)
etc. while $\sigma_i \in S_n$ for $i < n$.  In more detail, note that
$$(S_n\sigma_n \cdots \sigma_i \sigma_{i-1}) \sigma_i = S_n \sigma_n \cdots \sigma_{i+1} \sigma_{i-1} \sigma_i \sigma_{i-1} = S_n\sigma_n \cdots \sigma_{i-1} $$
The rest of the proof is routine.  (A second proof follows from $Q \cong U(\mbox{\bf 1}) \cong \Ind^{S_{n+1}}_{S_n}\mbox{\bf 1}$ and Young diagram branching rule of adding a box.)   
\end{proof}

Since $S_n \subseteq S_{n+1}$ is corefree, i.e., the core of the subgroup is trivial, it follows that the \textit{character $\chi_Q$ is faithful} (equivalently,
 the annihilator idea of $Q$ does not contain a nonzero Hopf ideal $\Leftrightarrow$ the representation of $\C G$ restricted to $G$ has trivial kernel) \cite[4.2]{K2014}. 
The Burnside-Brauer Theorem \cite[p.\ 49]{Is} implies for the character $\chi_Q$ that each irreducible character of $S_{n+1}$ is a constituent of its powers up to $\chi_Q^n$, since $\dim Q = n+1$. This implies that $d(Q, \M_{S_{n+1}}) \leq n$ by reasoning along the lines of Example~\ref{ex-artinian}.  Indeed $d(Q, \M_{S_{n+1}}) = n$ follows from Corollary~\ref{cor-cue} and
the graphical computation $d_h(S_n, S_{n+1}) = 2n+1$ in \cite{K2014}.  

We mention the theorem in \cite{IM}, that hooks generate the Green ring of a permutation group, as the full picture to  the discussion above.  
\begin{theorem}\cite[Marin]{IM}
The representation ring $A(\C S_{n+1})$ is generated by the representations $\Lambda^k \C^{n+1}$
for $0 \leq k \leq n$.  
\end{theorem}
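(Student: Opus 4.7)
The strategy is to first reduce to exterior powers of the standard irreducible $V$, identify these with the hook characters of $S_{n+1}$, and finally to show the hook characters generate the entire Green ring. Decompose the permutation module as $\C^{n+1} \cong \mathbf{1} \oplus V$, where $V = (n,1)$ is the standard $n$-dimensional irreducible. Since $\Lambda^j \mathbf{1} = 0$ for $j \geq 2$, the splitting principle for exterior powers yields $\Lambda^k \C^{n+1} \cong \Lambda^k V \oplus \Lambda^{k-1} V$. A telescoping induction on $k$, starting from $\Lambda^0 V = \mathbf{1}$, then places each $\Lambda^k V$ into the subring $R$ generated by $\{\Lambda^j \C^{n+1} : 0 \leq j \leq n\}$. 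The classical identification $\Lambda^k V \cong \chi_{(n+1-k,1^k)}$ (via the Specht module construction, or by matching $\dim \Lambda^k V = \binom{n}{k}$ against the hook-length formula) shows that $R$ contains all $n+1$ hook characters of $S_{n+1}$.

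The remaining and main task is to show the hook characters generate the whole representation ring as a subring. The standard $V$ is faithful, since $\chi_V(g) = \#\mathrm{Fix}(g) - 1$ equals $n = \dim V$ only at $g = e$. The Burnside--Brauer theorem, invoked just before the present statement, then guarantees that every irreducible of $S_{n+1}$ appears as a constituent of some $V^{\otimes k}$ with $k \leq n$, yielding the rational generation $\mathbb{Q}\otimes_\Z R = \mathbb{Q}\otimes_\Z A(\C S_{n+1})$. To upgrade to integer generation, I would induct on a suitable refinement of the dominance order on $\mu \vdash n+1$, exploiting the projection formula
$$ V \otimes \chi_\mu \;=\; \Ind^{S_{n+1}}_{S_n}\!\bigl(\chi_\mu|_{S_n}\bigr) \;-\; \chi_\mu, $$
which by the branching rule expands as a signed sum of $\chi_\nu$ for $\nu$ obtained from $\mu$ by removing and then re-adding a box. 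This realizes $V \otimes -$ as a ``Young-graph neighbour'' operator, permitting a Gaussian-elimination-style reduction across the partition lattice starting from the hook spine.

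The main obstacle is precisely this integer-generation step. The rational statement is immediate from Burnside--Brauer, but producing each non-hook irreducible as a specific $\Z$-polynomial in the hook characters requires careful control of Kronecker coefficients, or equivalently careful tracking of which $\mu$ are reachable from the hook spine by iterated Young-graph moves. A possible simplification is to first handle non-hook partitions of small width (few parts) or small depth (short first column), building up inductively to rectangular and then arbitrary partitions; alternatively, one could seek a direct symmetric-function identity expressing $s_\mu$ as a $\Z$-polynomial in the hook Schur functions $s_{(n+1-k,1^k)}$, which would bypass the inductive bookkeeping entirely.
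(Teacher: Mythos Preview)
The paper does not prove this theorem. It is stated only as a citation to Marin \cite{IM}, introduced with the sentence ``We mention the theorem in \cite{IM}, that hooks generate the Green ring of a permutation group, as the full picture to the discussion above,'' and no argument follows. So there is no ``paper's own proof'' to compare your proposal against; the result is imported wholesale from the literature.

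As for your proposal on its own terms: steps (1)--(3) are correct and standard, and you are right that the identification $\Lambda^k V \cong \chi_{(n+1-k,1^k)}$ reduces the problem to showing that hook characters generate $A(\C S_{n+1})$ over $\Z$. You are also right that this last step is where all the content lies, and you explicitly flag it as an obstacle rather than resolving it. The branching-rule operator $\chi_\mu \mapsto V\otimes\chi_\mu$ does act as a Young-graph neighbour operator, but turning that observation into an actual $\Z$-polynomial expression for each $\chi_\mu$ in hooks requires a nontrivial combinatorial argument that you have not supplied. Marin's actual proof in \cite{IM} proceeds via symmetric functions: under the characteristic map the $\Lambda^k\C^{n+1}$ correspond to elementary symmetric functions, and a determinantal (Jacobi--Trudi type) identity then expresses every Schur function as an integer polynomial in these. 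If you want to complete your write-up, that symmetric-function route is more direct than the inductive Gaussian elimination you sketch, and it is what your final sentence gestures toward.
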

\begin{remark}
\begin{rm}
Recall the notion of order of a module $V_H$ over a semisimple Hopf algebra $H$.  The order $ord(V)$ is the least natural number $n$ such that $V^{\otimes (n)}$ has nonzero invariant subspace, i.e., 
$\dim (V^{\otimes (n)})^H \neq 0$. For example, $ord(Q_{S_{n+1}}) = 1$ since $\chi_Q = \chi_1 + \chi_t$.  For general semisimple Hopf subalgebra pairs $H \supseteq R$ with quotient $Q$, one might conjecture that $ord(Q) \leq \ell_Q$, 
since  order of $Q$ and  $\ell_Q$ are both bounded above by the degree $d$ of the minimal polynomial of $\chi_Q$ in the character ring of $H$ (or $H/J$-modules where $J = \Ann Q^{\otimes \ell_Q}$  see \cite[chs.\ 4,5, p.\ 37]{KSZ} and 
\cite[2.3]{BKK}, respectively).  However, \cite[p.\ 32]{KSZ} computes the order of a certain induced module $V$  over the semidirect product group algebra $H = \C[\Z_{11}] \# \C[\Z_5]$ to be $ord(V) = 3$: with $R= \C[\Z_q]$, in fact $V \cong Q_H$. We deduce that $d(Q, \M_H) = 3$, since $d_h(R,H) = 1$ forces
$R = H$ by \cite[Cor.\ 3.3]{K2014}),  and $\ell_{Q_H} = 1$, since $R$ is a normal Hopf subalgebra in $H$: so in general $ord(Q) \not \leq \ell_Q$.  
\end{rm}
\end{remark}

   
\section{Factorisable algebras}  An \textit{algebra factorisation} of a unital (associative) algebra $C$ into two unital subalgebras $A$ and $B$ occurs when the multiplication mapping $B \otimes A \stackrel{\sim}{\longrightarrow} C$ is a $B$-$A$-bimodule isomorphism \cite{BW, CMZ}.  Conversely, the algebra $C$ may be constructed from $B$ and $A$ as a \textit{twisted tensor product} (denoted
by $B \otimes^R A$) as follows: linearly
$C = B \otimes A$ with multiplication given by the structure mapping $R: A \otimes B \to B \otimes A$,
values denoted by $R(a \otimes b) = b^r \otimes a^r$ or $b^R \otimes a^R$, where summation over more than one simple tensor is suppressed. In this case, the  multiplication in $B \otimes A$ is given by
\begin{equation}
(b_1 \otimes a_1)(b_2 \otimes a_2) = b_1 b_2^r \otimes a_1^r a_2
\end{equation}
 In order for $C$ to be associative, $R$ must satisfy two pentagonal commutative diagrams, equationally given by 
\begin{equation}
R( \mu_A \otimes B) = (B \otimes \mu_A)(R \otimes A)(A \otimes R)
\end{equation}  
(where $\mu_A$ denotes multiplication in $A$),  and
\begin{equation}
R(A \otimes \mu_B) = (\mu_B \otimes A)(B \otimes R)(R \otimes B)
\end{equation}
in $\Hom (A \otimes B \otimes B, B \otimes A)$. These equations are satisfied if and only if $C$ is associative.  
Additionally, the structure map $R$ satisfies two commutative triangles given equationally by $R(A \otimes 1_B) = 1_B \otimes A$ and $R(1_A \otimes B) = B \otimes 1_A$. It follows that $A \to C,
a \mapsto 1_B \otimes a$ and $B \to C, b \mapsto b \otimes 1_A$ are algebra monomorphisms. 
\begin{example}
\begin{rm}
Let $B$ be an algebra in ${}_H\M$, where $A = H$ is a Hopf algebra as before.  Let $R: B \otimes H \to H \otimes B$ be given by $R(b \otimes h) = h\1 . b \otimes h\2$.  Then $B \otimes^R  H =
B \# H$, the smash product of $H$ with a left $H$-module algebra $B$.  
\end{rm}
\end{example}
\begin{prop} \cite[Theorem 5.2]{HKY}
\label{th-young}
The minimum odd depth of $H$ embedded canonically in the smash product $B \# H$ satisfies
\begin{equation}
\label{eq: young}
d_{\rm odd}(H, B \# H) = d(B, {}_H\M) 
\end{equation} 
\end{prop}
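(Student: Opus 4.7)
The plan is to identify the $H$-$H$-bimodule tensor powers $(B\#H)^{\otimes_H(n)}$ with $B^{\otimes n}\otimes H$ carrying an explicit bimodule structure, and then to transport similarity between the categories ${}_H\M_H$ and ${}_H\M$ via two mutually compatible additive functors.

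First I would establish, by induction on $n$, an $H$-bimodule isomorphism
\[
\phi_n : (B\#H)^{\otimes_H (n)} \stackrel{\sim}{\longrightarrow} B^{\otimes n}\otimes H, \qquad (b_1\otimes 1)\otimes_H\cdots\otimes_H(b_n\otimes k) \mapsto b_1\otimes\cdots\otimes b_n\otimes k,
\]
using the normalisation $(b\otimes h)\otimes_H(b'\otimes h') = (b\otimes 1)\otimes_H(h\1 . b'\otimes h\2 h')$ that comes from the smash multiplication $(b\otimes h)(b'\otimes h') = b(h\1 . b')\otimes h\2 h'$. A direct unwinding shows that the induced $H$-bimodule structure on $B^{\otimes n}\otimes H$ is
\[
h\cdot(b_1\otimes\cdots\otimes b_n\otimes k) = (h\1 . b_1)\otimes\cdots\otimes(h\n . b_n)\otimes h_{(n+1)}k, \qquad (v\otimes k)\cdot h = v\otimes kh,
\]
i.e.\ the left action is the diagonal left $H$-action on $B^{\otimes n}$ from the tensor category ${}_H\M$ twisted with left multiplication on the terminal $H$-factor, while the right action is right multiplication on that factor.

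For the inequality $d(B,{}_H\M)\le d_{\rm odd}(H,B\#H)$ I apply the additive functor $-\otimes_H k_\eps:{}_H\M_H\to{}_H\M$, where $k_\eps$ is $k$ as right $H$-module through the counit. By the counit axiom, killing the terminal $H$-factor collapses $\cop^{(n)}$ to $\cop^{(n-1)}$, so this functor sends $B^{\otimes n}\otimes H$ to $B^{\otimes n}$ with its diagonal left $H$-action, namely the $n$-th tensor power of $B$ in ${}_H\M$. A similarity $(B\#H)^{\otimes_H (n+1)}\|q\cdot(B\#H)^{\otimes_H(n)}$ in ${}_H\M_H$ therefore descends to $B^{\otimes (n+1)}\|q\cdot B^{\otimes n}$ in ${}_H\M$.

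For the converse I apply the additive functor $-\otimes H:{}_H\M\to{}_H\M_H$ sending $V$ to $V\otimes H$ with the same bimodule structure as above; any left $H$-linear $f:V\to V'$ lifts to the $H$-bimodule map $f\otimes\id_H$ (an immediate check using $f(h. v)=h. f(v)$). A witness $\sum_i\beta_i\alpha_i=\id_{B^{\otimes(n+1)}}$ of $B^{\otimes(n+1)}\|q\cdot B^{\otimes n}$ in ${}_H\M$ therefore lifts, through $\phi_n^{-1}$ and $\phi_{n+1}^{-1}$, to a witness of $(B\#H)^{\otimes_H(n+1)}\|q\cdot (B\#H)^{\otimes_H(n)}$ in ${}_H\M_H$, giving $d_{\rm odd}(H,B\#H)\le d(B,{}_H\M)$. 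Combining the two inequalities yields the stated equality. The principal obstacle is the explicit bookkeeping in constructing $\phi_n$ and verifying by induction that the induced left $H$-action involves the iterated comultiplication $\cop^{(n)}:H\to H^{\otimes(n+1)}$; once this identification is in place, the two functorial transports of similarity are immediate.
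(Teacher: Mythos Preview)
Your proposal is correct and essentially identical to the paper's own proof: both establish the $H$-bimodule isomorphism $(B\#H)^{\otimes_H n}\cong B^{\otimes n}\otimes H$ with diagonal left action, and then pass similarity back and forth using exactly the two functors $-\otimes H:{}_H\M\to{}_H\M_H$ and $-\otimes_H k_\eps:{}_H\M_H\to{}_H\M$. The only difference is expository: you spell out the induction building $\phi_n$ and the verification that the iterated coproduct appears, whereas the paper simply invokes ``cancellations of the type $X\otimes_H H\cong X$.''
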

\begin{proof}
Via cancellations of the type $X \otimes_H H \cong X$, one establishes an $H$-$H$-bimodule isomorphism, 
\begin{equation}
 (B \# H)^{\otimes_H n} \cong B^{\otimes (n)} \otimes H, 
\end{equation}
where the left $H$-module structure on $B^{\otimes (n)} \otimes H$ is given by the diagonal
action: $$x.(b_1 \otimes \cdots \otimes b_n \otimes h) = x\1. b_1  \otimes \cdots \otimes  x\n . b_n  \otimes  x_{(n+1)}. h$$
If 
$B^{\otimes (n+1)} \| q \cdot B^{\otimes (n)}$ in ${}_H \M$ for some $q \in \N$, then tensoring this by $- \otimes H$
yields 
$(B \# H)^{\otimes_H (n+1)} \| q \cdot (B \# H)^{\otimes_H n}$ as $H$-$H$-bimodules.
Thus the minimum odd depth  $d_{\rm odd}(H, B \# H) \leq d(B, {}_H\M)$ by Definition~\ref{def-depth}.  

Conversely, if $(B \# H)^{\otimes_H (n+1)} \| q \cdot (B \# H)^{\otimes_H n}$ as $H$-$H$-bimodules,
then $B^{\otimes (n+1)} \otimes H \| q \cdot B^{\otimes (n)} \otimes H$, to which one applies 
$ - \otimes {}_Hk$, obtaining $B^{\otimes (n+1)} \| q \cdot B^{\otimes (n)}$ in ${}_H \M$. Therefore $d(B,{}_H\M)  \leq d_{\rm odd}(H, B \# H)$.
\end{proof}
Using the notation developed in Section~3 for a finite-dimensional Hopf subalgebra pair $R \subseteq H$
with quotient right $H$-module coalgebra $Q$, we note that its $k$-dual $Q^*$ becomes a left $H$-module algebra via $\bra h q^*, q \ket = \bra q^* , qh \ket$.  Yet another equivalent formulation of the fundamental problem in Section~1 follows easily from the proposition since $d(Q^*, {}_H\M) = d(Q, \M_H)$ \cite{K2014, HKY}. 
\begin{problem}
Is $d(H, Q^* \# H) < \infty$ or $d(R, Q^* \# R) < \infty$? 
\end{problem}
\begin{example}
\label{ex-match}
\begin{rm}
Suppose $B$ and $H$ are  a \textit{matched pair} of Hopf algebras (see \cite[7.2.1]{Ma} or \cite[IX.2.2]{CK}). I.e., $H$ is a coalgebra in $\M_B$
with action denoted by $h \ract b$, and $B$ is coalgebra in ${}_H\M$ with action denoted by
$h \lact b$ satisfying compatibility conditions given in \cite[(7.7)-(7.9)]{Ma}.  A twisting $R: H \otimes B \to B \otimes H$ is given by 
\begin{equation}
R(h \otimes b) = h\1 \lact b\1 \otimes h\2 \ract b\2,
\end{equation}
which defines an algebra structure on $B \otimes^R H = B \bowtie H$; moreover, this is a Hopf algebra, called the double cross product, where $H$ and $B$ are canonically Hopf subalgebras \cite{Ma}.

For example, $H$ and its dual Hopf algebra (with opposite multiplication) $B = {H^{op}}^*$ are a matched pair via $\lact$, the left coadjoint action of $H$ on $H^*$, 
\begin{equation}
\label{eq: majid}
h \lact b = b\2 \bra (S b\1 ) b\3, h \ket, 
\end{equation}
and $\ract$ the analogous left coadjoint action of $H^*$ on $H$.  This defines the Drinfeld double $D(H)$ as a special case of double cross product,  $D(H) = {H^{op}}^*\bowtie H$.
\end{rm}
\end{example}

\begin{prop}
\label{prop-matched}
Let $B$ and $H$ be a matched pair of finite-dimensional Hopf algebras with $A = B \bowtie H$ their double cross product.
Then the minimum h-depth and even depth of the Hopf subalgebra $B$ in $A$ is given by the depth of $H$ in the finite tensor category $\M_B$ (w.r.t.\ $\ract$ in Example~\ref{ex-match}): $d_h(B,A) = d(H, \M_B)$ and 
 $d_{even}(B,A) = d(H, \M_B)+1$. Similarly, the Hopf subalgebra $H$ has depth in $A$
given by $d_h(H,A) = d(B, {}_H\M)$ (w.r.t.\ $\lact$) and $d_{even}(H,A) = d(B, {}_H\M) + 1$.  
\end{prop}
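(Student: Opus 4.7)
The plan is to reduce both depth identities to Corollary~\ref{cor-cue}, after identifying the quotient $A$-module coalgebras $A/B^+A$ and $A/H^+A$ of the two Hopf subalgebras $B,H\subseteq A$ with $H$ and $B$ respectively, carrying the matched pair actions.

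First I would identify the quotients explicitly. Since the underlying vector space of $A=B\bowtie H$ is $B\otimes H$ and $B$ is embedded as $B\otimes 1$, the left ideal $B^+A$ is precisely $B^+\otimes H$, giving a coalgebra isomorphism $Q_A:=A/B^+A\cong H$ via $\overline{b\otimes h}\mapsto \eps_B(b)\,h$. Using the multiplication rule $(1\otimes h)(b'\otimes h')=(h\1\lact b'\1)\otimes (h\2\ract b'\2)h'$ and applying $\eps_B$ on the $B$-factor, the induced right $A$-action on $H$ becomes $h\triangleleft (b'\otimes h')=(h\ract b')\,h'$; restricting to $B\subseteq A$ recovers the $\ract$-action, so $Q_A\cong H$ as objects of $\M_B$. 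The parallel computation with $A\cong H\otimes B$ coming from the factorisation $A=H\cdot B$ identifies $A/H^+A\cong B$ as an object of ${}_H\M$ with the $\lact$-action.

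With these identifications, the even depth formulas are immediate from the second clause of Corollary~\ref{cor-cue}: one has $d_{even}(B,A)=d(Q_A,\M_B)+1=d(H,\M_B)+1$, and symmetrically $d_{even}(H,A)=d(B,{}_H\M)+1$. For the h-depth formulas, the first clause of Corollary~\ref{cor-cue} gives $d_h(B,A)=d(Q_A,\M_A)$, and the inequality $d(H,\M_B)\le d(Q_A,\M_A)$ is immediate from the tensor functor $\M_A\to \M_B$ obtained by restriction. The reverse inequality uses the induction isomorphism of Eq.~(\ref{eq: inductQ}), which gives $Q_A^{\otimes n}\otimes_B A\cong Q_A^{\otimes (n+1)}$ in $\M_A$; applying the exact functor $-\otimes_B A$ to a similarity $H^{\otimes n}\sim H^{\otimes(n+1)}$ in $\M_B$ transports the similarity into $\M_A$ at the corresponding level of $Q_A$. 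A parallel argument with $\lact$ in place of $\ract$ yields $d_h(H,A)=d(B,{}_H\M)$.

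The principal obstacle is the reverse h-depth inequality: a naive induction $-\otimes_B A$ formally shifts the similarity level by one, so the delicate point is to show that the matched pair compatibility causes this shift to cancel, making $A$-module similarities of the tensor powers of $Q_A$ equivalent to $B$-module similarities of the tensor powers of $H$. I expect to resolve this by establishing a bimodule decomposition $A^{\otimes_B n}\cong A\otimes H^{\otimes(n-1)}$ in ${}_A\M_A$ directly analogous to the proof of Proposition~\ref{th-young}, exploiting that $A\cong H\otimes B$ as right $B$-modules makes $A$ a free right $B$-module of rank $\dim H$, so that the $A$-bimodule similarity of tensor powers $A^{\otimes_B(n+1)}\,\|\, q\cdot A^{\otimes_B n}$ is controlled by the $B$-module similarity of the $H^{\otimes(n-1)}$ factor alone.
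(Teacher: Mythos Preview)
Your identification of the quotient $Q\cong H$ with its $A$-action and the derivation of the even-depth formula via Corollary~\ref{cor-cue} matches the paper exactly: the paper's entire proof is the computation $\overline{h}\cdot b=\overline{h\ract b}$ followed by a bare citation of Corollary~\ref{cor-cue} for both formulas.

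For the h-depth claim you go further than the paper. You are right that Corollary~\ref{cor-cue} only gives $d_h(B,A)=d(Q,\M_A)$, and that an extra argument is needed to pass from $\M_A$ to $\M_B$; the paper does not supply one. This gap is genuine: for a general Hopf subalgebra pair one has only $d(Q,\M_B)\le d(Q,\M_A)\le d(Q,\M_B)+2$, and the upper bound is attained (e.g.\ $\C S_2\subset\C S_3$, where $d(Q,\M_{S_2})=3$ but $d(Q,\M_{S_3})=5$). So any proof of equality must use something specific to the matched-pair situation.

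Your proposed resolution, however, does not yet close the gap. The decomposition $A^{\otimes_B n}\cong A\otimes H^{\otimes(n-1)}$ in ${}_A\M_A$ is precisely the Galois isomorphism underlying Corollary~\ref{cor-cue}; the right $A$-action on it is diagonal through $\cop_A$, so applying $k\otimes_A-$ recovers $H^{\otimes(n-1)}$ only as an $A$-module, not as a $B$-module. This re-proves $d_h(B,A)=d(Q,\M_A)$ rather than $d(H,\M_B)$. The induction $-\otimes_B A$ that you invoke genuinely raises the tensor power by one via Eq.~(\ref{eq: inductQ}), and your final sentence does not explain what cancels this shift. The true analogue of Proposition~\ref{th-young} would require a functor $V\mapsto A\otimes V$ from $\M_B$ to ${}_A\M_A$ carrying the Doi--Koppinen bimodule structure of Eq.~(\ref{eq: comodulealgdiagonal}), which in turn needs $A$ to be a right $B$-comodule algebra; but the natural candidate projection $b\bowtie h\mapsto b\,\eps_H(h)$ is not an algebra map unless $\lact$ is trivial. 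Whatever mechanism forces the equality must exploit the matched-pair compatibility axioms more directly than either your outline or the paper's one-line proof indicates.
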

\begin{proof}
This follows from Cor.~\ref{cor-cue} if we show that the quotient module $Q_B \cong (H_B, \ract)$. 
Note that $Q = B \bowtie H / B^+ (B \bowtie H) \cong H$ via $\overline{b \bowtie h} \mapsto \eps_B(b)h$, and  
 $$\overline{h}b = \overline{(1_B \bowtie h)(b \bowtie 1_H)} = \overline{h\1 \lact b\1 \bowtie  h\1 \ract b\2} = \eps_B(h\1 \lact b\1) \overline{h\2 \ract b\2}$$
$ =  \overline{h \ract b}$,
where we use axiom~(\ref{eq: axiom}) for $B$, a left $H$-module coalgebra.
\end{proof}
For example, if $B = {H^{op}}^*$ and $B \bowtie H = D(H)$, suppose $H$ is cocommutative.  From the
formula for coadjoint action, it is apparent that $H_B \cong (\dim H) \cdot k$, so $d(H, \M_B) = 1$ and $d(H^*, D(H)) \leq 2$.  Indeed, it is known that $D(H) \cong H^* \# H$ in case $H$ is quasitriangular \cite[Majid, 1991, 7.4]{Ma}), but a smash product is a Hopf-Galois extension of its left $H$-module algebra
(which has depth $2$).  

\begin{example}
\label{ex-eight}
\begin{rm}
A study of the $8$-dimensional small quantum group $H_8$ (see for example \cite[Example 4.9]{K2014} for its Hopf algebra structure) and its quantum double $D(H_8)$
indicates that minimum depth satisfies $3 \leq d(H_8, D(H_8)) \leq 4$.  The method is to compute $D(H_8)$ in terms of generators and relations, compute the quotient $Q$ as an 
$8$-dimensional $H_8$-module, then decompose it into its indecomposable summands (twice each simple, and two $2$-dimensional indecomposables), compute the tensor products between these indecomposables, noting that $Q \sim Q \otimes Q$ as $H_8$-modules, and using Eq.~(\ref{eq: cue-even}). Since both algebras have infinite representation type, we cannot otherwise predict a finite depth from known results \cite{K2014, AHA}. 
\end{rm}
\end{example}  

Let ${}_{ad}H$ denote the adjoint action of $H$ on itself, given by $h . x = h\1 x S(h\2)$ for all
$h,x \in H$. 
\begin{cor}\cite[Cor.\ 5.4]{HKY}
\label{cor-group}
Let $G$ be a finite group and $D(G)$ its Drinfeld double as a complex group algebra. Then $d(\C G,D(G)) = d({}_{ad}\C G, {}_{\C G}\M)$. 
\end{cor}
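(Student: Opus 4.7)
The plan is to read the corollary as a specialisation of Proposition~\ref{prop-matched} to the Drinfeld double, followed by a short identification of the resulting coadjoint module with ${}_{ad}\C G$. As recorded in Example~\ref{ex-match}, $D(G) = D(\C G)$ admits the presentation $(\C G)^{*op} \bowtie \C G$, exhibiting it as the double cross product of the matched pair $B = (\C G)^{*op}$ and $H = \C G$. Applying the second half of Proposition~\ref{prop-matched} to the Hopf subalgebra $H \subseteq A = D(G)$ immediately yields $d_h(\C G, D(G)) = d(B, {}_H\M)$, in which $B$ is viewed as a left $H$-module through the left coadjoint action $\lact$ of Eq.~(\ref{eq: majid}).

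It then remains only to compute this coadjoint action and identify it with the adjoint action. Expanding~(\ref{eq: majid}) and pairing with an arbitrary $x \in H$ gives the standard identity $(h \lact f)(x) = f(S(h\1)\, x\, h\2)$ for the left coadjoint action on $H^*$. Because $\C G$ is cocommutative with $\cop(g) = g \otimes g$ and $S(g) = g^{-1}$, this collapses to $(g \lact f)(x) = f(g^{-1} x g)$, so on the dual basis $\{\delta_h\}_{h \in G}$ one reads off $g \lact \delta_h = \delta_{g h g^{-1}}$. The adjoint action of Corollary~\ref{cor-group} meanwhile satisfies $g \cdot h = g h g^{-1}$ on group elements, so the linear bijection $\varphi: B \to {}_{ad}\C G$, $\delta_h \mapsto h$, is an isomorphism of left $\C G$-modules.

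To conclude, observe that the tensor-category depth of Definition~\ref{def-depthtens} depends on $B$ only through the similarity classes of the iterated tensor powers of its underlying object in ${}_{\C G}\M$ (formed with the diagonal $H$-action), not through its algebra structure. The module isomorphism $\varphi$ therefore lifts to isomorphisms of all tensor powers, giving $d(B, {}_{\C G}\M) = d({}_{ad}\C G, {}_{\C G}\M)$. Combined with the first paragraph this yields the asserted equality. No serious obstacle is expected: Proposition~\ref{prop-matched} does the structural work, and the only explicit computation is the cocommutative simplification of the coadjoint action in the second paragraph.
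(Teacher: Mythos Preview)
Your argument establishes only $d_h(\C G, D(G)) = d({}_{ad}\C G, {}_{\C G}\M)$, but the corollary concerns the \emph{minimum depth} $d(\C G, D(G))$, which is a different invariant.  Proposition~\ref{prop-matched} gives you $d_h(H,A) = d(B,{}_H\M)$ and $d_{even}(H,A) = d(B,{}_H\M)+1$; neither of these is $d(H,A)$.  In general one has only $|d_h - d| \leq 2$, so knowing $d_h$ alone leaves $d$ undetermined.  Even if you invoke the even-depth half of Proposition~\ref{prop-matched}, the most you can extract is $d(\C G,D(G)) \in \{m, m+1\}$ where $m = d({}_{ad}\C G,{}_{\C G}\M)$: from $d_{even} = m+1$ you get $d \geq m$ (otherwise depth $m-1$ would hold) and $d \leq m+1$, but you cannot rule out $d = m+1$.

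The paper closes this gap by bringing in a second, independent computation of $d_{odd}$.  Because $\C G$ is cocommutative, one has $D(G) \cong (\C G)^* \# \C G$ as a smash product, and Proposition~\ref{th-young} then yields $d_{odd}(\C G, D(G)) = d((\C G)^*, {}_{\C G}\M)$, where the left $\C G$-module structure on $(\C G)^*$ comes from the smash-product formula and is again the adjoint action.  This pins down $d_{odd} = m$.  Your coadjoint computation (which is correct and is exactly what the paper does in its second half) then supplies $d_{even} = m+1$ via Proposition~\ref{prop-matched}.  Only with both pieces in hand does one obtain $d(\C G,D(G)) = \min(d_{odd}, d_{even}) = m$.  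So the missing ingredient in your proposal is the smash-product identification $D(G) \cong (\C G)^* \# \C G$ together with the appeal to Proposition~\ref{th-young} for the odd depth.
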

\begin{proof}
From the remark about cocommutativity just above, the double $D(G) = H^* \# H$ (with $H = \C G$) is a smash product to which Proposition~\ref{th-young} applies:  thus $d_{odd}(\C G, D(G)) = d(H^*, {}_{\C G}\M)$. The smash product multiplication formula for $g, h \in G$, $p_g, p_h \in H^*$  one-point projections, is given by 
\begin{equation}
\label{eq: cocommutative}
( p_x \# g)(p_y \# h) = 
p_x p_{gyg^{-1}} \# gh
\end{equation}
which visibly demonstrates that ${}_HH^* \cong {}_{ad}H^* \cong {}_{ad}\C G$. 

 It remains to show that $d_{even}(\C G, D(G)) =  1 + d_{odd}(\C G, D(G))$.   Note that $S(p_x) = p_{x^{-1}}$, $$\cop^2(p_x) = \sum_{z,y \in G}
p_z \otimes p_{z^{-1}y} \otimes p_{y^{-1}x}$$
whence using Eq.~(\ref{eq: majid}) $$h \lact p_x = \sum_{z,y \in G} p_{z^{-1}y} \bra p_{z^{-1}} p_{y^{-1} x} , h \ket = \sum_{z,y \in G} \bra p_{z^{-1}}, h \ket \bra p_{y^{-1} x}, h \ket p_{z^{-1}y} = p_{hxh^{-1}},$$
the adjoint action of $h$ on $p_x$.  Use Proposition~\ref{prop-matched} to conclude the proof.  
\end{proof}


\section{Morita equivalent ring extensions}
In this section we continue a study of Morita equivalence of ring extensions in \cite{Mi,Ik, Y}, though with an emphasis on functors and categories.  We will briefly provide the classical background theory, and prove that depth, relative cyclic homology as well as the bipartite graphs of a semisimple complex subalgebra pair are all Morita invariant properties of a ring or algebra extension. In addition, we  note a natural example of Morita equivalence in towers of Frobenius extensions. 

Define \textit{two ring extensions $A \| B$ and $R \| S$ to be Morita equivalent} if there are additive equivalences  $\mathcal{P}: {}_R\M \to {}_A\M$  and $\mathcal{Q}: {}_S\M \to {}_B\M$ satisfying a commutative rectangle (up to a natural isomorphism) with respect to the functors of restriction from $R$-modules into $S$-modules,
and from $A$-modules into $B$-modules.
\begin{equation}
\label{def-cs}
\begin{diagram}
{}_R\M &&\rTo^{\sim}_{\mathcal{P}} && {}_A\M\\
\dTo^{\Res^R_S} && && \dTo_{\Res^A_B}\\
{}_S\M && \rTo^{\sim}_{\mathcal{Q}} && {}_B\M 
\end{diagram}
\end{equation}
The  requirement then is that there be a natural isomorphism $\mathcal{Q} \Res^R_S \stackrel{\sim}{\rightarrow} \Res^A_B \mathcal{P}$.     One shows in an exercise that this is an equivalence relation on ring extensions by using operations on natural transformation by functors.  

From ordinary Morita theory we know that $\mathcal{P}({}_RR) = {}_AP$, a progenerator such that $\End{}_AP \cong R$, so that $P$ is in fact an $A$-$R$-bimodule with $\mathcal{P}({}_RX) = P \otimes_R X$ for
all ${}_RX$.  The dual of P is unequivocally $P^* = \Hom (P_R, R_R)$, an $R$-$A$-bimodule, since
$\Hom ({}_AP, {}_AA) \cong P^*$ as $R$-$A$-bimodules by  \cite[Theorem 1.1]{M}.  Then $P^* \otimes_A -: {}_A\M \to {}_R\M$ is an inverse equivalence to $\mathcal{P}$:  one has bimodule isomorphisms $P^* \otimes_A P \cong {}_RR_R$
and $P \otimes_R P^* \cong {}_AA_A$.  

Similarly there is an invertible Morita bimodule ${}_BQ_S$, a left and right progenerator module, such
that $\mathcal{Q}({}_SY) = {}_BQ \otimes_S Y$.  The condition that the rectangle above commutes applied to $R \in {}_R\M$ becomes ${}_BQ \otimes_S R \cong {}_BP$, also valid as $B$-$R$-bimodules due to naturality, noted as an equivalent condition in the proposition below. 

\begin{example}
\label{ex-matrix}
\begin{rm}
Given a ring extension $R \supseteq S$, let $A = M_n(R) \supseteq B = M_n(S)$.  Of course,
$A$ and $R$ are Morita equivalent via $P = n \cdot R$, also $B$ and $S$ are Morita equivalent
via $Q = n \cdot S$. Note that $$ {}_BQ \otimes_S R_R \cong n \cdot R = {}_BP_R.$$  Thus, as one would expect, the ring extensions $R \supseteq S$ and $A \supseteq B$ are  Morita equivalent. 
\end{rm}
\end{example}

\begin{example}
\label{ex-isomo}
\begin{rm}
Suppose $B \subseteq A$ and $S \subseteq R$ are ring extensions with ring isomorphism $\psi: A \stackrel{\sim}{\rightarrow} R$ restricting to a ring isomorphism $\eta: B \stackrel{\sim}{\rightarrow} S$.  Defining bimodules ${}_AP_R := {}_{\psi}R_R$ and ${}_BQ_S := {}_{\eta}S_S$, one shows in an exercise that the two ring extensions are Morita equivalent. 
\end{rm}
\end{example}
The proposition below characterizes Morita equivalence of ring extensions in many equivalent ways, condition (2) being the definition in \cite{Mi,Ik,Y}.  
\begin{prop}
\label{prop-8items}
The following conditions on ring extensions $A \supseteq B$ and $R \supseteq S$ are equivalent:
\begin{enumerate}
\item $A \supseteq B$ and $R \supseteq S$ are Morita equivalent;
\item there are Morita bimodules ${}_AP_R$ and ${}_BQ_S$ satisfying ${}_BQ \otimes_S R_R \cong {}_BP_R$ \cite{Mi};
\item there are Morita bimodules ${}_AP_R$ and ${}_BQ_S$ satisfying ${}_RR \otimes_S {Q^*}_B \cong {}_R {P^*}_B$;
\item there are Morita bimodules ${}_AP_R$ and ${}_BQ_S$ satisfying ${}_AA \otimes_B Q_S \cong {}_AP_S$;
\item  there are Morita bimodules ${}_AP_R$ and ${}_BQ_S$  satisfying ${}_SQ^* \otimes_B A_A \cong {}_S {P^*}_A$;
\item the following rectangle, with sides representing the induction functors, commutes up to a natural isomorphism,
\begin{equation}
\label{eq: csind}
\begin{diagram}
{}_R\M &&\rTo^{\sim}_{\mathcal{P}} && {}_A\M\\
\uTo^{\Ind^R_S} && && \uTo_{\Ind^A_B}\\
{}_S\M && \rTo^{\sim}_{\mathcal{Q}} && {}_B\M 
\end{diagram}
\end{equation}
\item the following rectangle, with sides representing the coinduction functors, commutes up to a natural isomorphism,
\begin{equation}
\label{eq: cscoind}
\begin{diagram}
{}_R\M &&\rTo^{\sim}_{\mathcal{P}} && {}_A\M\\
\uTo^{\Coind^R_S} && && \uTo_{\Coind^A_B}\\
{}_S\M && \rTo^{\sim}_{\mathcal{Q}} && {}_B\M 
\end{diagram}
\end{equation}
\item any of the conditions above  stated identically with  right module categories $\M_R$, $\M_A$, $\M_S$, and $\M_B$
replacing the corresponding left module categories. 
\end{enumerate}
\end{prop}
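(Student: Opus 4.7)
The strategy is to pivot around condition (1): first establish (1) $\Leftrightarrow$ (2) directly, then derive (3), (4), (5) from (2) by bimodule manipulations, obtain (1) $\Leftrightarrow$ (6) and (1) $\Leftrightarrow$ (7) by adjunction, and finally deduce (8) by symmetry.

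For (1) $\Leftrightarrow$ (2), apply a Yoneda-style argument. Given the natural isomorphism $\tau : \mathcal{Q}\,\Res^R_S \stackrel{\sim}{\longrightarrow} \Res^A_B\,\mathcal{P}$, evaluation at ${}_RR$ yields an isomorphism ${}_BQ \otimes_S R \cong {}_BP$ of left $B$-modules, since $\mathcal{P}({}_RR) = {}_AP$ and $\mathcal{Q}({}_SR) = {}_BQ\otimes_S R$. Naturality with respect to right multiplication by any $r\in R$, viewed as an endomorphism of ${}_RR$ in ${}_R\M$, upgrades this to a $B$-$R$-bimodule isomorphism, giving (2). Conversely, a $B$-$R$-bimodule iso $\phi$ as in (2) induces a natural isomorphism $\tau_X := \phi \otimes_R \id_X : Q\otimes_S X \to P\otimes_R X$, using $Q\otimes_S X \cong Q\otimes_S R\otimes_R X$ for $X \in {}_R\M$.

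The equivalences among (2), (3), (4), (5) are routine bimodule tensor manipulations with the Morita inverse identities $P\otimes_R P^* \cong {}_AA_A$, $P^*\otimes_A P \cong {}_RR_R$, $Q\otimes_S Q^* \cong {}_BB_B$, and $Q^*\otimes_B Q \cong {}_SS_S$. For instance, starting from (2) and tensoring on the right with $-\otimes_R P^*$ produces ${}_BQ\otimes_S P^* \cong {}_BA_A$; tensoring then on the left with ${}_SQ^*\otimes_B -$ yields (5). Variations on this pattern, together with the hom-tensor adjunction $\Hom(-\otimes_S R_R, R_R) \cong \Hom(-_S, R_S)$ applied to finitely generated projective $Q$, produce (3) and (4).

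For (1) $\Leftrightarrow$ (6) and (1) $\Leftrightarrow$ (7), I invoke adjunction. Since $\Ind^A_B = A\otimes_B -$ is left adjoint to $\Res^A_B$ and $\Coind^A_B = \Hom({}_BA, -)$ is right adjoint, and since the Morita equivalences $\mathcal{P}, \mathcal{Q}$ have their inverses $P^*\otimes_A -$ and $Q^*\otimes_B -$ as simultaneous left and right adjoints, taking left adjoints of the natural iso in (1) yields $\Ind^R_S\,\mathcal{Q}^{-1} \cong \mathcal{P}^{-1}\,\Ind^A_B$, equivalently $\mathcal{P}\,\Ind^R_S \cong \Ind^A_B\,\mathcal{Q}$, which is (6); taking right adjoints similarly yields (7). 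Finally, (8) follows by applying the entire argument to opposite rings, equivalently to right module categories, using that the bimodule conditions (2)--(5) are inherently left-right symmetric. The main obstacle is the adjoint-passage step for (1) $\Leftrightarrow$ (7), since coinduction is not itself a tensor functor and one must navigate the Hom-tensor interplay while still producing a natural isomorphism of the correct orientation; the bimodule manipulations (2)--(5), while tedious, are formal consequences of the Morita inverse identities above.
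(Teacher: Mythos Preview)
Your proposal is correct, and your overall logical organisation differs from the paper's in a way worth noting. The paper proceeds along a chain: it takes (1)\,$\Rightarrow$\,(2) from the discussion preceding the proposition, proves (2)\,$\Leftrightarrow$\,(3) by a Hom computation ($P^* \cong \Hom(Q\otimes_S R_R,R_R) \cong \Hom(Q_S,R_S) \cong R\otimes_S Q^*$), passes from (3) to a right-module restriction rectangle and thence to (4) and (8), derives (5) from (4) by the dual Hom argument, and then obtains (6) from (4) and (7) from (5) by explicit natural-isomorphism computations with the standard adjunction isomorphisms of \cite{AF}. You instead pivot everything through (1): after (1)\,$\Leftrightarrow$\,(2), you obtain (6) and (7) in one stroke from (1) by the uniqueness of adjoints, observing that an equivalence is simultaneously a left and a right adjoint of its quasi-inverse, so that passing to left (resp.\ right) adjoints of the restriction square yields the induction (resp.\ coinduction) square directly. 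This is cleaner and bypasses the explicit Hom computations the paper uses for (5)\,$\Rightarrow$\,(7); the paper's route, by contrast, yields the intermediate bimodule isomorphisms (3)--(5) as genuine stepping stones rather than as side remarks, and produces the right-module version (8) as a by-product of the (3)\,$\Rightarrow$\,(4) argument rather than by a global opposite-ring appeal. Your derivation of (2)--(5) via Morita-inverse tensor identities and the finite-projectivity Hom-tensor identity is equivalent in content to the paper's, though the paper writes out the Hom chain for $P^*$ more explicitly.
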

\begin{proof}
(1) $\Rightarrow$ (2) is sketched above.  (2) $\Leftrightarrow$ (3) follows from the computation 
$$ {}_R{P^*}_B \cong {}_R\Hom (P_R, R_R)_B \cong  
 {}_R\Hom (Q \otimes_S R_R, R_R)_B  \cong {}_R\Hom (Q_S, R_S)_B   $$ 
$ \cong{}_R  R \otimes_S {Q^*}_B$ using adjoint theorems in \cite[pp. 240, 243]{AF}.  This shows
(2) $\Rightarrow$ (3). This argument reverses by using the reflexive property of progenerators (${}_A\Hom ({}_RP^*, {}_RR)_R \cong {}_AP_R$).  

(3) $\Rightarrow$ (4) and (8). The following rectangle is commutative up to a natural isomorphism:
$$\begin{diagram}
\M_R &&\rTo^{\sim}_{-\otimes_R P^*} && \M_A\\
\dTo^{\Res^R_S} && && \dTo_{\Res^A_B}\\
\M_S && \rTo^{\sim}_{-\otimes_S Q^*} && \M_B 
\end{diagram}$$
since for any module $X_R$ one has $$X \otimes_R {P^*}_B \cong X \otimes_R R \otimes_S {Q^*}_B \cong X \otimes_S {Q^*}_B.$$
To the natural isomorphism identifying the sides of this rectangle, apply the functor $-\otimes_B Q$ from
the left and the functor $- \otimes_A P$ from the right to obtain the following commutative rectangle up to natural isomorphism:
$$\begin{diagram}
\M_A &&\rTo^{\sim}_{-\otimes_A P} && \M_R\\
\dTo^{\Res^A_B} && && \dTo_{\Res^R_S}\\
\M_B && \rTo^{\sim}_{-\otimes_B Q} && \M_S 
\end{diagram}$$
(4) now follows from applying the rectangle to $A_A$. 
(4) $\Rightarrow$ (5).  The same type of argument as in (2) $\Rightarrow$ (3) above shows that $${}_S{P^*}_A \cong
{}_S\Hom ({}_AP, {}_AA)_A \cong {}_S\Hom ({}_BQ, {}_BB) \otimes_B A_A \cong {}_S Q^* \otimes_B A_A.$$ 

(4) $\Rightarrow$ (6).  By using  (4), compute for any module ${}_SY$,  $${}_AA \otimes_B Q \otimes_S Y \cong {}_AP \otimes_S Y \cong  {}_A P \otimes_R R \otimes_S Y,$$
which shows the rectangle (6) is commutative up to a natural isomorphism. The converse (6) $\Rightarrow$ (4) follows from applying the rectangle to ${}_SS \in {}_S\M$ as well as naturality.  

(5) $\Rightarrow$ (7) For any module ${}_SW$, it suffices to show that $P\otimes_R \Hom ({}_SR, {}_SW) \cong \Hom ({}_BA, {}_BQ \otimes_S W)$ using natural isomorphisms in \cite[20.6, 20.11, exercise 20.12]{AF} and (5):
$$  {}_AP\otimes_R \Hom ({}_SR, {}_SW) \cong {}_A\Hom ({}_SP^*, {}_SW) \cong {}_A\Hom ({}_SQ^* \otimes_B A, {}_SW) $$
$$ \cong {}_A\Hom ({}_BA, {}_B\Hom ({}_SQ^*, {}_SW)) \cong {}_A\Hom({}_BA, {}_BQ \otimes_S W)$$

The rest of the proof is similar and left as an exercise.
\end{proof}

In the following proposition, we note some  different, quick proofs for certain results in \cite{Ik}, while building up  results which show that depth and bipartite graphs are Morita invariants of ring extensions. 
\begin{prop}
\label{prop-equiv}
Suppose $A \| B$ and $R \| S$ are Morita equivalent ring extensions.  In the notation of the previous proposition, it follows that
\begin{enumerate}
\item if the extension $A \supseteq B$ is a separable, then $R \supseteq S$ is a separable extension \cite{Ik};
\item if the extension $A \supseteq B$ is QF, then $R \supseteq S$ is a QF extension \cite{Ik};
\item \label{item-questioned} if the extension $A \supseteq B$ is Frobenius, then $R \supseteq S$ is a Frobenius extension \cite{Ik};
\item if $B \subseteq A$ is a semisimple complex subalgebra pair, then so is $S \subseteq R$ with identical inclusion matrix and bipartite graph;
\item the following diagram of tensor categories and functors commutes up to natural isomorphism:
\begin{equation}
\label{eq: gee}
\begin{diagram}
{}_R\M_R &&\rTo^{\sim}_{F} && {}_A\M_A\\
\dTo^{\Res^{R^e}_{S^e}} && && \dTo_{\Res^{A^e}_{B^e}}\\
{}_S\M_S && \rTo^{\sim}_{G} && {}_B\M_B 
\end{diagram}
\end{equation}
where $F({}_RX_R) := {}_AP \otimes_R X \otimes_R {P^*}_A$ and $G({}_SY_S) := {}_BQ \otimes_S Y \otimes_S {Q^*}_B$ define tensor equivalences;
\item $G(R^{\otimes_S (n)}) \cong A^{\otimes_B (n)}$ as $B$-$B$-bimodules and $F(R^{\otimes_S (n)}) \cong A^{\otimes_B (n)}$ as $A$-$A$-bimodules for each $n \in \N$; 
\item the centralizers are isomorphic:  $A^B \cong R^S$ \cite{Ik};
\item the ring extensions $A \| B$ and $R \| S$ have the same minimum depth and h-depth. 
\end{enumerate}
\end{prop}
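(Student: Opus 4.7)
The plan is to transport similarity across the tensor equivalences of items (5) and (6), together with a mixed bimodule analogue needed for even depth. The key observation is that any additive equivalence of categories preserves and reflects the similarity relation $\h$: Eq.~(\ref{eq: oplus}) characterizes $X \h Y$ purely in terms of morphisms, compositions, and identities, so any equivalence $E$ satisfies $E(X) \h E(Y)$ if and only if $X \h Y$. Applying this principle will yield each of the three depth equalities.

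First, for minimum h-depth I would use item (6), which gives $F(R^{\otimes_S(n)}) \cong A^{\otimes_B(n)}$ as $A$-bimodules; since $F$ is an equivalence, $A^{\otimes_B(n)} \h A^{\otimes_B(n+1)}$ in ${}_A\M_A$ if and only if $R^{\otimes_S(n)} \h R^{\otimes_S(n+1)}$ in ${}_R\M_R$, forcing $d_h(B,A) = d_h(S,R)$. Likewise, the first isomorphism of item (6) combined with the equivalence $G$ gives $d_{odd}(B,A) = d_{odd}(S,R)$ by the same transport argument over $B$- and $S$-bimodules.

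For left and right minimum even depths, I would introduce a mixed bimodule equivalence $G': {}_S\M_R \stackrel{\sim}{\longrightarrow} {}_B\M_A$ by $G'(Y) = Q \o_S Y \o_R P^*$, which is an additive equivalence since it factors as the composition of the Morita equivalences $Q \o_S -: {}_S\M \to {}_B\M$ and $- \o_R P^*: \M_R \to \M_A$. The sublemma to establish is $G'(R^{\otimes_S(n)}) \cong A^{\otimes_B(n)}$ as $B$-$A$-bimodules. I would argue this by induction on $n$: the inductive step reduces to the $A$-$R$-bimodule isomorphism $P \o_S R \cong A \o_B P$, which follows immediately by combining conditions (2) and (4) of Proposition~\ref{prop-8items}. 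The transport principle then equates the left minimum even depths, and a symmetric argument using $G''({}_RY_S) = P \o_R Y \o_S Q^*$ handles the right minimum even depths. Combining all three cases yields $d(B,A) = \min(d_{odd}, d_{even}) = d(S,R)$.

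The main place requiring care is the mixed bimodule step for even depth; however, the only new ingredient beyond the Morita dictionary of Proposition~\ref{prop-8items} is the routine induction identifying the tensor powers, so no genuine conceptual obstacle arises beyond organizing the bookkeeping.
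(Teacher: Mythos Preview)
Your proposal is correct and follows essentially the same route as the paper's proof of item (8): the paper also introduces the mixed bimodule equivalence $H({}_SW_R)=Q\otimes_S W\otimes_R P^*$ (your $G'$) together with its companion $H'$, proves $H(R^{\otimes_S(n)})\cong A^{\otimes_B(n)}$, and then transports similarity across $F$, $G$, $H$, $H'$ to equate h-depth, odd depth, and left/right even depth respectively. The only cosmetic difference is that the paper obtains the mixed tensor-power isomorphism from a commutative rectangle with $\Res^R_S$ and item (6) (using condition (2) of Proposition~\ref{prop-8items}), whereas you run an explicit induction combining conditions (2) and (4); both arguments are equally short and yield the same identification.
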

\begin{proof}
\begin{enumerate}
\item Let $0 \to V \to W \to U \to 0$ be a short exact sequence in ${}_A\M$ that is split exact when restricted to ${}_B\M$.  By Rafael's characterization \cite{Ra} of separability, the short exact sequence splits in ${}_A\M$.  The rest of the proof follows from applying the  commutative rectangle~(\ref{def-cs}).
\item Suppose ${}_AV$ is $(A,B)$-projective (or ``relative projective''), i.e., ${}_AV \| {}_AA \otimes_B V$ (or the multiplication epi $A \otimes_B V \to V$ splits as an $A$-module map).  By the relative Faith-Walker theorem for QF extensions \cite{BM}, $V$ is also $(A,B)$-injective:  i.e., the canonical $A$-module monomorphism $V \into \Hom ({}_BA, {}_BV)$ splits.  In fact the class of relative projectives coincides with the class of relative injectives for QF extensions.  It is clear from the commutative diagram~(\ref{eq: csind})
that the equivalence $\mathcal{P}$ sends relative projectives into relative projective; similarly, it is clear from
the commutative rectangle~(\ref{eq: cscoind}) that relative injectives are sent by an equivalence into relative injectives.  The rest of the proof is then an application of the relative Faith-Walker characterization of QF extension.  
\item The proof is an application of the commutative rectangles~(\ref{eq: csind}) and~(\ref{eq: cscoind}) and the characterization of Frobenius extensions as having naturally isomorphic induction and coinduction functors.  Suppose $R \supseteq S$ is Frobenius.  Then $$\Ind^A_B \mathcal{Q} \cong \mathcal{P}\Ind^R_S \cong \mathcal{P} \Coind^R_S \cong \Coind^A_B \mathcal{Q}.$$
Since $\mathcal{Q}$ is an equivalence, it follows that $\Ind^A_B$ and $\Coind^A_B$ are naturally isomorphic functors, whence $A \supseteq B$ is Frobenius.  
\item Let $V_1,\ldots,V_s$ be the simples of $S$ (up to isomorphism).  Then $U_i := Q \otimes_S V_i$ are representatives of the simple isoclasses of $B$ by Morita theory.  Induce each $V_i$ to an $R$-module, expressing this uniquely up to isomorphism as a sum of nonnegative multiples of the simples of $R$, $W_1,\ldots,W_r$:
$$ R \otimes_S V_i \cong \oplus_{j=1}^r r_{ij} W_j.$$
The $s \times r$ matrix is the inclusion matrix $K_0(S) \to K_0(R)$ of the semisimple complex subalgebra pair $S \subseteq R$. This matrix determines the bipartite graph of the inclusion, an edge connecting black dot $i$ with white dot $j$ in case the $(i,j)$-entry is nonzero.

Since $A$ and $R$ Morita equivalent rings, both are semisimple complex algebras; the same is true
of $B$ and $S$.  Moreover, their centers are isomorphic, thus $A$ and $R$ each have $r$ distinct simples, and $B$, $S$ each have $s$ pairwise nonisomorphic simples.  Denote the simples of $A$ by $X_1, \ldots, X_r$ where $X_i \cong P \otimes_R W_i$ for each $i$. Suppose the inclusion matrix of $B \subseteq A$ is given by $A \otimes_B U_i \cong
\oplus_{j=1}^r b_{ij} X_j$.  Since 
$$A \otimes_B U_i \cong A \otimes_B Q \otimes_S V_i \cong P \otimes_R R \otimes_S V_i \cong \oplus_{j=1}^r r_{ij} X_j$$
this implies by Krull-Schmidt that the inclusion matrices $(b_{ij})$ and $(r_{ij})$ are equal.  Thus the bipartite
graphs are equal.    
\item The functors $F$ and $G$ are  tensor equivalences according to Lemma~\ref{lem-tys}.  Let ${}_RX_R$ be a bimodule. Note that 
$ \Res^{A^e}_{B^e}(F(X)) = {}_BP \otimes_R X \otimes_R {P^*}_B$
$ \cong {}_B Q \otimes_S R \otimes_R X \otimes_R R \otimes_S {Q^*}_B \cong G(\Res^{R^e}_{S^e}(X))$
by applying (2) and (3) in Proposition~\ref{prop-8items}. Whence the rectangle is commutative. 
\item From the commutative rectangle just established it follows that $G({}_SR_S) \cong {}_BA_B$
and from the tensor functor property of $G$ that $G(R^{\otimes_S (n)} ) \cong {}_B{A^{\otimes_B (n)}}_B$.

A computation similar to the one in (4) of this proof shows that the following rectangle is commutative:
$$\begin{diagram}
{}_R\M_R &&\rTo^{\sim}_{F} && {}_A\M_A\\
\uTo^{\Ind^{R^e}_{S^e}} && && \uTo_{\Ind^{A^e}_{B^e}}\\
{}_S\M_S && \rTo^{\sim}_{G} && {}_B\M_B 
\end{diagram}$$
where $\Ind^{R^e}_{S^e}({}_SZ_S) := {}_RR \otimes_S Z \otimes_S R_R$. 
Since $F$ preserves tensor category unit objects, $F({}_RR_R) \cong {}_AA_A$. 
Starting with ${}_SS_S \in {}_S\M_S$, the rectangle shows that $F({}_RR \otimes_S R_R) \cong {}_AA \otimes_B A_A$.  Starting with $R^{\otimes_S (n)} \in {}_S\M_S$ in the rectangle, we note that for $n \geq 1$, 
$$F({}_R{R^{\otimes_S (n+2)}}_R) \cong \Ind^{A^e}_{B^e}(A^{\otimes_B (n)}) = {}_A{A^{\otimes_B (n+2)}}_A.$$
\item Note the equivalence of bimodule categories $H: {}_S\M_R \to {}_B\M_A$ given by $H({}_SW_R) := {}_BQ \otimes_S W \otimes _R {P^*}_A$. We claim that $H({}_SR_R) \cong {}_BA_A$; moreover,
\begin{equation}
\label{eq: steven}
H({}_S{R^{\otimes_S (n)}}_R) \cong {}_B{A^{\otimes_B (n)}}_A
\end{equation}
 for all $n \geq 1$.  
This follows from the diagram below, commutative up to natural isomorphism.  
\begin{equation}
\label{fig-mixedmodules}
\begin{diagram}
{}_R\M_R &&\rTo^{\sim}_{F} && {}_A\M_A\\
\dTo^{\Res^R_S} && && \dTo_{\Res^A_B}\\
{}_S\M_R && \rTo^{\sim}_{H} && {}_B\M_A
\end{diagram}
\end{equation}
which is established by a short computation  using (2) in Prop.~\ref{prop-8items}. Applied to $R^{\otimes_S (n)} \in {}_R\M_R$, we obtain Eq.~(\ref{eq: steven}). 

Note that the centralizer $R^S = \{ r \in R \, : \, \forall s \in S, rs = sr \}$ is isomorphic to  $\End ({}_SR_R) \cong R^S $ via $f \mapsto f(1)$. Recall that an equivalence $H$ satisfies $$\End ({}_SR_R)  \cong \End (H({}_S R_R)) \cong \End ({}_BA_A) \cong A^B. $$
\item Similarly to Eq.~(\ref{eq: steven}), we establish that the equivalence of bimodule categories
given by $H': {}_R\M_S \to {}_A\M_B$, ${}_RV_S \mapsto P \otimes_R V \otimes_S Q^*$ satisfies
\begin{equation}
\label{eq: steven'}
H'(R^{\otimes_S (n)}) \cong {}_A{A^{\otimes_B (n)}}_B
\end{equation}
Of course, equivalences preserve similarity of modules since they are additive.  Suppose
$R^{\otimes_S (n)} \sim R^{\otimes_S (n+1)}$ as $R$-$S$-bimodules, i.e., $R \| S$ has right depth $2n$.  Applying $H'$, one obtains $A^{\otimes_B (n)} \sim A^{\otimes_B (n+1)}$ as $A$-$B$-bimodules, i.e., $A \| B$ has right depth $2n$.  Similarly for left depth $2n$ using the equivalence $H$.  Similarly, if $R \| S$ has depth $2n+1$, applying $G$ we obtain that $A \| B$ has depth $2n+1$.  Going in the reverse direction using $G^{-1}$, $H^{-1}$, we obtain $d(S,R) = d(B,A)$.  
Using $F$ we likewise show that $d_h(S,R) = d_h(B,A)$. 
\end{enumerate}
\end{proof}

\subsection{Example: tower above Frobenius extension} \label{subsect-tower}
A Frobenius extension $A \supseteq B$ is characterized by any of the following four conditions
\cite{K}.
First, that $A_B$ is finite projective and ${}_BA_A \cong \Hom (A_B, B_B)$. Secondly,
that ${}_BA$ is finite projective and ${}_AA_B \cong \Hom ({}_BA, {}_BB)$.  Thirdly,
that coinduction and induction of right (or left) $B$-modules into $A$-modules are naturally isomorphic functors.  Fourth,
there is a Frobenius coordinate system $(E: A \to B; x_1,\ldots,x_m,$ $y_1,\ldots,y_m \in A)$, which satisfies $(\forall a \in A)$
\begin{equation}
\label{eq: FEQ's}
E \in \Hom ({}_BA_B, {}_BB_B), \ \ \sum_{i=1}^m E(ax_i)y_i = a = \sum_{i=1}^m x_i E(y_i a).
\end{equation}
These  equations may be used to show that $\sum_i x_i \o y_i \in (A \o_B A)^A$.  

By \cite[Lemma 4.1]{LK2012}, a Frobenius extension $A \supseteq B$ has both $A_B$ and ${}_BA$  generator modules if and only if the Frobenius homomorphism $E: A \to B$ is surjective:  although most Frobenius extensions in the literature
are generator extensions, there is a somewhat pathological example  in \cite[2.7]{K}  of a matrix algebra Frobenius extension with a non-surjective Frobenius homomorphism.

A Frobenius  extension $A \supseteq B$ enjoys an endomorphism ring theorem, which states that $ A_2 := \End A_B \supseteq A$ is itself a Frobenius  extension,  where the ring monomorphism $A \to A_2$ is 
the left multiplication mapping $\lambda: a \mapsto \lambda_a$ , $\lambda_a(x) = ax$.   
It is worth noting that $\lambda$ is a left split $A$-monomorphism (by evaluation at $1_A$) so ${}_AA_2$ is a generator.  It is an exercise to check that $A_2 \cong A \otimes_B A$ via
$f \mapsto \sum_i f(x_i) \otimes_B y_i$; the induced ring structure on $A \otimes_B A$ is the
``E-multiplication,'' given by 
\begin{equation}
(a \otimes_B c)(d \otimes_B e) = aE(cd) \otimes_B e.
\end{equation}
The identity is given $1 = \sum_i x_i \otimes_B y_i$.  The Frobenius coordinate system for
$A_2 \supseteq A_1$ is given by $E_2(a \otimes_B c) = ac$ (always surjective!) with dual bases $\{ x_i \otimes_B 1 \} $
and $\{ 1 \otimes_B y_i \}$.  

The \textit{tower} of a Frobenius extension is obtained by iteration of the endomorphism ring and $\lambda$,
obtaining a tower of Frobenius extensions; with the notation $B := A_0, A := A_1$ and defining
$A_{n+1} = \End {A_n}_{A_{n-1}}$, we obtain the tower, 
\begin{equation}
\label{eq: tower}
A_0 \into A_1 \into A_2 \into \cdots \into A_n \into A_{n+1} \into \cdots
\end{equation}
By transitivity of Frobenius extension or QF extension \cite{P}, all sub-extensions $A_m \into A_{m+n}$ in the tower are also Frobenius  extensions. Note that $A_n \cong A^{\otimes_B (n)}$: the ring, module and Frobenius structures in the tower are worked out in \cite{LK2012}. 

\begin{theorem}
Suppose $A \supseteq B$ is a Frobenius extension with the tower and data notation given above.  Then $A_{n-1} \supseteq A_{n-2}$
is Morita equivalent to $A_{n+1} \supseteq A_n$ for all integers $n > 1$.  Also $A \supseteq B$ is Morita equivalent
to $A_3 \supseteq A_2$ if the Frobenius homomorphism is epi.
\end{theorem}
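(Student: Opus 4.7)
The plan is to construct Morita bimodules from the tower and verify condition~(2) of Proposition~\ref{prop-8items}. Set ${}_{A_{n+1}}P_{A_{n-1}} := A_n$, where $A_{n+1} = \End({A_n}_{A_{n-1}})$ acts by evaluation and $A_{n-1}$ acts by right multiplication via the inclusion $\lambda$; set ${}_{A_n}Q_{A_{n-2}} := A_{n-1}$ analogously. By ordinary Morita theory, $P$ gives a Morita equivalence of the rings $A_{n+1}$ and $A_{n-1}$ provided ${A_n}_{A_{n-1}}$ is a progenerator, and similarly for $Q$. Finite projectivity is automatic for Frobenius extensions; by \cite[Lemma 4.1]{LK2012}, being a generator is equivalent to surjectivity of the Frobenius homomorphism. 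In the tower, $E_k(a \otimes_{A_{k-2}} c) = ac$ is always surjective for $k \geq 2$ (set $c=1$, as noted for $E_2$ in the text and iterated up the tower), so both progenerator conditions are automatic whenever $n \geq 3$; the case $n = 2$ reduces to needing $E : A \to B$ epi, which is precisely the hypothesis of the second statement.

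Next I would verify condition~(2) of Proposition~\ref{prop-8items}, which unwinds to the assertion $A_{n-1} \otimes_{A_{n-2}} A_{n-1} \cong A_n$ as $A_n$-$A_{n-1}$-bimodules. The underlying ring isomorphism is the classical identification $A_n = \End({A_{n-1}}_{A_{n-2}}) \cong A_{n-1} \otimes_{A_{n-2}} A_{n-1}$ endowed with $E$-multiplication, sending $f \mapsto \sum_i f(x_i) \otimes_{A_{n-2}} y_i$ for Frobenius dual bases $\{x_i\}, \{y_i\}$ of $A_{n-1} \supseteq A_{n-2}$. Left $A_n$-linearity is immediate since both structures are by pre-composition on the first tensor factor. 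For right $A_{n-1}$-linearity, one checks that $f \circ \lambda_a \mapsto \sum_i f(a x_i) \otimes y_i$, whereas $\bigl(\sum_i f(x_i) \otimes y_i\bigr) \cdot a = \sum_i f(x_i) \otimes y_i a$; these coincide upon applying $f \otimes \id$ (which is well-defined because $f$ is right $A_{n-2}$-linear) to the centrality identity $\sum_i a x_i \otimes_{A_{n-2}} y_i = \sum_i x_i \otimes_{A_{n-2}} y_i a$. This is just the statement that $\sum_i x_i \otimes y_i$ lies in $(A_{n-1} \otimes_{A_{n-2}} A_{n-1})^{A_{n-1}}$, recorded immediately after~(\ref{eq: FEQ's}). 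Proposition~\ref{prop-8items}(2) then delivers the desired Morita equivalence of ring extensions, handling both the first assertion for $n \geq 3$ and the second for $n = 2$ under the hypothesis that $E$ is epi.

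The main obstacle is the bookkeeping in this compatibility step: tracking the $A_n$-$A_{n-1}$-bimodule structure through the $E$-multiplication on $A_{n-1} \otimes_{A_{n-2}} A_{n-1}$ and invoking the right $A_{n-2}$-linearity of $f \in \End({A_{n-1}}_{A_{n-2}})$ at just the right moment to transfer the Frobenius Casimir centrality from the extension $A_{n-1} \supseteq A_{n-2}$ to the needed bimodule identity in $A_n$.
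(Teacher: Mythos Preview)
Your proposal is correct and follows essentially the same route as the paper: the same Morita bimodules $P = A_n$ and $Q = A_{n-1}$, the same progenerator argument via surjectivity of $E_k$ for $k \geq 2$, and verification of condition~(2) in Proposition~\ref{prop-8items}. The only cosmetic difference is that the paper takes $P := A \otimes_B A$ (equivalently $A_{n-1} \otimes_{A_{n-2}} A_{n-1}$) directly, so that condition~(2), namely $Q \otimes_S R \cong P$, is a tautology and no Casimir-centrality bimodule check is needed; your version with $P := A_n$ requires the explicit $A_n$-$A_{n-1}$-bimodule verification you supply, which is fine but slightly more work.
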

\begin{proof}
It suffices to assume $E: A \to B$ is surjective, let $S = A_2 = \End A_B$, $R = A_3$, and show that $B \into A$ is Morita equivalent to $A_2 \into A_3$. Since $A$ is a Frobenius extension of $B$ with surjective Frobenius homorphism, it follows that the module  $A_B$ is a progenerator; since $A_2 = \End A_B$, it follows that $B$ and $A_2$ are Morita equivalent rings.  Similarly, $A$ and $A_3 \cong \End A \otimes_B A_A$
are Morita equivalent rings. 

In the notation of Proposition~\ref{prop-8items} (exchanging $R$ with $A$, $B$ with $S$), note that $Q = A$ and $P = A \otimes_B A$. Thus ${}_SQ \otimes_B A_A \cong {}_SP_A$, the condition
in the proposition for Morita equivalent ring extensions.  
\end{proof}
The theorem states in other words that the tower above a Frobenius extension has up to Morita equivalence period two. 
Note that consecutive ring extensions in the tower are almost never Morita equivalent:  in \cite[Example 1.12]{LK2012},
the depth is $d(S_3,S_4) = 5$, but of its reflected graph, the depth is $d(A,A_2) = 6$  (where $A = \C S_4$, using the graph-theoretic depth calculation in \cite[Section~3]{BKK}). 
 
\subsection{Relative cyclic homology of ring extensions is  Morita invariant}  We extend a result in \cite{LK92} that relative cyclic homology of a ring extension $R \supseteq S$ and of its $n \times n$-matrix ring extension $M_n(R) \supseteq M_n(S)$ are isomorphic via a Dennis trace map adapted to this set-up.
The relative cyclic homology (or any of its several variant homologies) is computed from cyclic modules
$$Z_n(R,S) := R \otimes_{S^e} R^{\otimes_S (n)},$$ which has the effect of considering tensor products
of the natural bimodule ${}_SR_S$ with itself over $S$ $n+1$ times arranged in a circle (in place of a line). For each $n \geq 0$, there are $n+1$ face maps are given by $d_i: Z_n(R,S) \rightarrow Z_{n-1}(R,S)$ defined from tensoring $n-1$ copies of  the $\id_{{}_SR_S}$ with one copy of the multiplication $\mu \in \Hom ({}_SR \otimes_S R_S, {}_SR_S)$ at the $i$´th position,
there are $n+1$ degeneracy mappings $s_j:  Z_n(R,S) \to Z_{n+1}(R,S)$ by tensoring $n$ copies of $\id_{{}_SR_S}$ with one copy of the unit mapping $\eta \in \Hom ({}_SS_S, {}_SR_S)$ in the $i$´th position, and a cyclic permutation 
$t_n: Z_n(R,S) \to Z_n(R,S)$ of order $n+1$ (see \cite{LK92} for the Connes cyclic object relations \cite{C} and the textbook \cite{Lo} for further details).

Suppose ring extensions $R \supseteq S$ and $A \supseteq B$ are Morita equivalent, and assume the same
structural bimodules and module equivalences with notation as in this section.  
Now recall from the diagram~(\ref{eq: gee}) that the tensor equivalence $G: {}_S\M_S \to {}_B\M_B$,
defined by $G(X) = Q \otimes_S X \otimes_S Q^*$,
sends ${}_SR_S$ into ${}_BA_B$.  We note the following commutative diagram,

\begin{equation}
\label{fig-bifunctors}
\begin{diagram}
{}_S\M_S \times  {}_S\M_S &&\rTo^{\sim}_{G \times G} && {}_B\M_B \times  {}_B\M_B \\
\dTo^{- \otimes_{S^e} -} && && \dTo_{- \otimes_{B^e}- }\\
Ab_S && \rTo^{\sim}_{\hat{G}} && Ab_B
\end{diagram}
\end{equation}   
\newline
where $Ab_B$ denotes ${}_B\M_B \otimes_{B^e} {}_B\M_B$, a subcategory of abelian groups  (and
similarly for $Ab_S$), 
from a computation with $X,Y \in {}_S\M_S$:
$$ G(X) \otimes_{B^e} G(Y) \cong X \otimes_S Q^* \otimes_B Q \otimes_{S^e} Q^* \otimes_B Q \otimes_S Y $$
$$ \cong X \otimes_S S  \otimes_{S^e}  S \otimes_S Y \cong X  \otimes_{S^e}  Y. $$
It follows that $Z_n(R,S) \stackrel{\cong}{\longrightarrow} Z_n(A,B)$ via $\hat{G}$ (restricted to the cyclic modules) as abelian groups for each $n \geq 0$. 
Now  $\hat{G}$   commutes
with face maps since the functor $G$ sends   the multiplication of $R \supseteq S$, 
$$ \mu \in \Hom ({}_SR \otimes_S R_S, {}_SR_S) \mapsto \mu´ \in \Hom ({}_BA \otimes_B A_B, {}_BA_B),$$
the multiplication of the ring extension $A \supseteq B$.  That $\hat{G}: Z_n(R,S) \to Z_n(A,B)$ commutes with the degeneracy maps follows
from the functor $G$ sending the unit $\eta \in \Hom ({}_SS_S, {}_SR_S)$ into the unit $\eta´ \in \Hom ({}_BB_B, {}_BA_B)$.  That $\hat{G}: Z_n(R,S) \to Z_n(A,B)$ commutes with the cyclic group action generator $t_n$ follows from $G \times G$ commuting with simple exchange $X \times Y \mapsto Y \times X$.  We have sketched the proof of the next proposition.

\begin{prop}
If $R \supseteq S$ and $A \supseteq B$ are Morita equivalent ring extensions, then their cyclic modules, cyclic chain complexes and cyclic homology groups 
are isomorphic:  $HC_n(R,S) \cong HC_n(A,B)$, all $n \in \N$.
\end{prop}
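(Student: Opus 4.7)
The plan is to transport the Connes cyclic object $Z_\bullet(R,S)=R\otimes_{S^e}R^{\otimes_S(\bullet)}$ along the tensor equivalence $G:{}_S\M_S\stackrel{\sim}{\to}{}_B\M_B$ of Proposition~\ref{prop-equiv}(5), which sends ${}_SR_S$ to ${}_BA_B$. First I would establish the key technical lemma, namely that for $X,Y\in{}_S\M_S$ there is a natural isomorphism $G(X)\otimes_{B^e}G(Y)\cong X\otimes_{S^e}Y$. This is exactly the computation displayed below diagram~(\ref{fig-bifunctors}) and uses only the Morita identifications $Q^*\otimes_B Q\cong{}_SS_S$, $Q\otimes_S Q^*\cong{}_BB_B$ and the standard collapse $S\otimes_{S^e}Z\cong Z/[S,Z]$ for $S$-bimodules $Z$. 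Iterating this natural isomorphism through the $n$ tensor factors of $R^{\otimes_S(n)}$ produces a levelwise isomorphism $\hat G_n:Z_n(R,S)\stackrel{\sim}{\to}Z_n(A,B)$ of abelian groups.

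Next, I would verify that $\hat G_\bullet$ is a morphism of Connes cyclic objects, i.e.\ that it intertwines the face maps $d_i$, degeneracies $s_j$ and cyclic generator $t_n$. Each $d_i$ is the functorial insertion at the $i$th slot of the multiplication $\mu\in\Hom({}_SR\otimes_S R_S,{}_SR_S)$; because $G$ is a tensor functor and carries $R$ to $A$, it sends $\mu$ to the multiplication $\mu'\in\Hom({}_BA\otimes_B A_B,{}_BA_B)$, so naturality of the key lemma forces $\hat G_\bullet$ to intertwine $d_i$ with its counterpart over $B$. Similarly each degeneracy $s_j$ inserts the unit $\eta\in\Hom({}_SS_S,{}_SR_S)$, which $G$ carries to the unit $\eta'$ of $A$ over $B$ (since $G$ preserves the tensor-category unit and sends algebra structure morphisms to algebra structure morphisms). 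The cyclic generator $t_n$ is induced by the cyclic symmetry of tensor slots inside the bifunctor $(-)\otimes_{S^e}(-)^{\otimes_S(n)}$; as a structural natural transformation in each slot of this bifunctor, it is preserved by $\hat G_\bullet$ by naturality of the key lemma in both variables.

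Once $\hat G_\bullet$ is identified as an isomorphism in Connes' cyclic category $\Lambda$, the induced isomorphism of cyclic chain complexes (and hence of Hochschild-Connes double complexes) yields $HC_n(R,S)\cong HC_n(A,B)$ for every $n\in\N$, and the same argument works verbatim for the variant homologies. The main obstacle is compatibility with the cyclic generator $t_n$, which performs a cyclic shift across all $n+1$ tensor positions rather than a mere transposition of two factors: one must check that the single natural isomorphism from the key lemma reassembles coherently across this permutation. In practice this is an abstract consequence of naturality of the symmetry isomorphism of $-\otimes_{S^e}-$ together with the tensor functoriality of $G$, so once the lemma is firmly in place the cyclic-operator compatibility reduces to a diagram chase, and no input beyond Propositions~\ref{prop-8items} and~\ref{prop-equiv} is needed.
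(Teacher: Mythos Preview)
Your proposal is correct and follows essentially the same route as the paper's own argument: the key lemma $G(X)\otimes_{B^e}G(Y)\cong X\otimes_{S^e}Y$ from diagram~(\ref{fig-bifunctors}), the resulting levelwise isomorphism $\hat G_n$, and the verification that $\hat G$ intertwines faces, degeneracies, and the cyclic operator via the fact that $G$ sends $\mu,\eta$ to $\mu',\eta'$ and is compatible with the exchange of factors. Your treatment of the cyclic generator $t_n$ is in fact more careful than the paper's one-line appeal to ``$G\times G$ commuting with simple exchange,'' but the underlying idea is the same.
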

The isomorphism is given by a generalized Dennis trace mapping as follows.  Suppose the $S$-bimodule isomorphism $Q^* \otimes_B Q \stackrel{\cong}{\longrightarrow} S$ sends $\sum_{i=1}^r q_i^* \otimes q_i \mapsto 1_S$.  Then an isomorphism of cyclic modules $Z_n(A,B)\stackrel{\cong}{\longrightarrow} Z_n(R,S) $ is given by
\begin{equation}
\label{eq: dennis}
a_0 \otimes \cdots \otimes a_n \mapsto \sum_{i_0,\ldots,i_n=1}^r q_{i_0} \otimes a_0 \otimes q_{i_1}^* \otimes q_{i_1} \otimes \cdots \otimes q_{i_n} \otimes a_n \otimes q_{i_0}^*
\end{equation}
In the matrix example~\ref{ex-matrix} of Morita equivalent ring extensions, where each $a_i$ denotes an $n\times n$-matrix, this expression simplifies to the classical Dennis trace isomorphism of cyclic modules  noted in \cite{LK92},
$$a_0 \otimes_{B^e}a_1 \otimes_B \cdots \otimes_B a_n \mapsto \sum_{i_0, \ldots,i_n=1}^r a_0^{i_0 i_1} \otimes_{S^e} a_1^{i_1 i_2} \otimes_S \cdots \otimes_S a_n^{i_n i_0}. $$

\subsection{Acknowledgements}   The author thanks the organizers of the ``New Trends in Hopf algebras and Tensor Categories'' in Brussels, June 2-5, 2015, for a nice conference including Joost Vercruysse and Mio Iovanov for discussions about Proposition~\ref{prop-equiv}, item (\ref{item-questioned}), Alberto Hernandez for interesting mathematical conversations about several subjects in this paper, and  
CMUP (UID/MAT/00144/2013), which is funded by FCT (Portugal) with national (MEC) and European structural funds through the programs FEDER, under the partnership agreement PT2020, as well as  Professor Manuel Delgado, and the Invited Scientist program of CMUP  for financial support.


\begin{thebibliography}{XXXXXX}
\begin{small}
\bibitem{AF} Anderson and K.~Fuller,  \textit{Rings and Categories of Modules}, G.T.M.\ \textbf{13}, Springer, 1992.  
\bibitem{BDK} R.~Boltje, S.~Danz and B.~K\"ulshammer, On the depth of subgroups and group algebra extensions, \textit{J.\ Algebra} \textbf{335} (2011), 258--281.  
\bibitem{BK} R.~Boltje and B.~K\"ulshammer,
On the depth 2 condition for group algebra and Hopf algebra extensions, 
\textit{J.\ Algebra} \textbf{323} (2010), 1783-1796.
\bibitem{BK2}{R.~Boltje and B.~K\"ulshammer,
Group algebra extensions of depth one, \textit{Algebra Number Theory} \textbf{5}
(2011), 63-73.} 
\bibitem{BW}T.~Brzezinski and R.~Wisbauer, \textit{Corings and Comodules}, Cambridge Univ.\ Press, 
2003. 
\bibitem{BuK} S.~Burciu and L.~Kadison, Subgroups of depth three,  \textit{Surv.\ Diff.\ Geom.} \textbf{XV}, (2011), 17--36. 
\bibitem{BKK} S.~Burciu, L.~Kadison and B.~K\"ulshammer, On subgroup depth, \textit{I.E.J.A.} \textbf{9} (2011), 133--166. (Appendix by S.~Danz and B.~K\"ulshammer.)
\bibitem{CH}H.-X.~Chen and G.~Hiss, Projective summands in tensor products of simple modules of finite dimensional Hopf algebras, \textit{Comm.\ Alg.} \textbf{32} (2004), 4247--4264.
\bibitem{CMZ}S.~Caenepeel, G.~Militaru and S.~Zhu, \textit{Frobenius and separable functors for generalized module categories and nonlinear equations}, Lect.\ Notes Math.\ \textbf{1787}, Springer, 2002.
\bibitem{C}A.~Connes, Cohomologie cyclique et foncteurs  Ext. \textit{C.\ R.\ Acad.\ Sci.\ Paris} Sér. I Math. 296 (1983),  953--958. 
\bibitem{D}S.~Danz, The depth of some twisted group extensions, \textit{Comm.\ Alg.} \textbf{39} (2011), 1- 15. 
\bibitem{EO} P.~Etingof and V.~Ostrik, Finite tensor categories, \textit{Moscow J.\ Math.} \textbf{4} (2004),  627--654, 782--783.
\bibitem{F}T.~Fritzsche, The depth of subgroups of $\mathrm{PSL}(2,q)$, \textit{J.\ Algebra} \textbf{349} (2011), 217--233.  \textit{Ibid} II, \textit{J. Algebra} \textbf{381} (2013), 37--53.
\bibitem{FKR}T.~Fritzsche, B.~K\"ulshammer and C.~Reiche, The depth of Young subgroups of symmetric groups,  \textit{J.\ Algebra} \textbf{381} (2013),  96--109. 
 \bibitem{HKY} A.~Hernandez, L.~Kadison and C.J.~Young, Algebraic quotient modules and subgroup depth, 
\textit{Abh.\ Math.\ Semin.\ Univ.\ Hamburg} \textbf{84} (2014),
267-283.
\bibitem{HKS} A.~Hernandez, L.~Kadison and M.~Szamotulski, Subgroup depth and twisted coefficients, 
\textit{Comm.\ Alg.}, to appear. 
\bibitem{AHA}A.~Hernandez, Algebraic quotient modules and coring depth, Doctoral thesis, U.\ Porto, expected
2016. 
\bibitem{HHP} L.~H\'{e}thelyi, E.~Horv\'{a}th and F.~Pet\'{e}nyi, The depth of subgroups of Suzuki groups, \textit{Comm.\ Alg.} \textbf{43} (2015), 4553--4569.
\bibitem{HS}K.~Hirata and K.~Sugano, On semisimple and separable extensions of noncommutative rings, \textit{J.\ Math.\ Soc.\ Japan} \textbf{18}  (1966), 360--373.
\bibitem{H}{K. Hirata,
Some types of separable extensions of rings, \textit{Nagoya Math.\ J.}
\textbf{33} (1968), 107--115.}
\bibitem{Ik}S.~Ikehata, On Morita equivalence in ring extensions, \textit{Math.\ J.\ Okayama Univ.}, \textbf{18} (1975). 73--79. 
\bibitem{Is}I.M.~Isaacs, \textit{Character Theory of Finite Groups}, Dover, 1976.
\bibitem{LK92}L.~Kadison, Cyclic homology of triangular matrix algebras, in: \textit{Topology Hawaii}
(Honolulu, HI, 1990), World Sci. Publ., River Edge, NJ, 1992, 137--148. 
\bibitem{K} L.~Kadison, 
\textit{New examples of Frobenius extensions}, 
University Lecture Series \textbf{14},
Amer.\ Math.\ Soc., Providence,  1999. 
\bibitem{KS} L.~Kadison and  K.~Szlach\'anyi,
Bialgebroid actions on depth two extensions and duality, \textit{Adv.\ in Math.} \textbf{179} (2003), 75--121.
\bibitem{LK2005}L.~Kadison, Depth two and the Galois coring, in: Contemp.\ Math.\ \textbf{391}, 2005, 149--156. 
\bibitem{K2008} L.~Kadison, Finite depth and Jacobson-Bourbaki correspondence, \textit{J.\ Pure Appl.\ Alg.} \textbf{212} (2008), 1822--1839.
\bibitem{LKProc}L.~Kadison, Infinite index subalgebras of depth two, \textit{Proc. A.M.S.} \textbf{136} (2008), 1523--1532.
\bibitem{K2012}L.~Kadison, Odd H-depth and H-separable extensions, \textit{Cen.\ Eur.\ J.\ Math.}
\textbf{10} (2012), 958--968.
\bibitem{LK2012}L.~Kadison, Subring depth, Frobenius extensions and towers, \textit{Int.\ J.\ Math.\ \& Math.\ Sci.} \textbf{2012} article \texttt{254791}.
\bibitem{K2014}L.~Kadison, Hopf subalgebras and tensor products of generalized permutation modules, \textit{J.\ Pure Appl.\ Algebra} \textbf{218} (2014), 367--380.
\bibitem{KSZ} Y.~Kashina, Y.~Sommerh\"ausser and Y.~Zhu, \textit{On Higher Frobenius-Schur Indicators}, Memoirs \textbf{855}, A.M.S., 2006. 
\bibitem{CK}C.~Kassel, \textit{Quantum Groups}, G.T.M.\ \textbf{155}, Springer, 1995.  
\bibitem{Lam}{T.Y. Lam,
\textit{Lectures on Modules and Rings}, Grad.\ Texts Math. \textbf{189}, Springer-Verlag,
Heidelberg-Berlin-New York, 1999.}
\bibitem{Lo}J.-L.~Loday, \textit{Cyclic Homology}, Grundlehren \textbf{301}, Springer-Verlag, 1992. 
\bibitem{Ma}S.~Majid, \textit{Foundations of Quantum Group Theory}, Cambridge University Press, 1995.
\bibitem{IM}I.~Marin, Hooks generate the representation ring of the symmetric group, \textit{Expositiones Mathematicae} \textbf{30} (2012), 268--276.
\bibitem{Mi} Y.~Miyashita, On Galois extensions and crossed products, \textit{J.\ Fac.\ Sci.\ Hokkaido Univ., Ser.\ I}, \textbf{21} (1970), 97--121.
\bibitem{M}{K.~Morita,
The endomorphism ring theorem for Frobenius extensions,
\textit{Math.\ Zeit.} \textbf{102 } (1967), 385--404.}
\bibitem{BM}{B. M\"{u}ller, Quasi-Frobenius Erweiterungen I, \textit{Math.\ Zeit.} \textbf{85} (1964), 345--368.}
\bibitem{N} D.~Nikshych, Tensor Categories: Notes for Math 961, UNH 
 preprint,  2010. 
\bibitem{P}B.~Pareigis, Einige Bemerkungen \"{u}ber Frobeniuserweiterungen, \textit{Math.\ Ann.}
\textbf{153} (1964), 1--13.
\bibitem{PQ}D.S.~Passman and D.~Quinn, Burnside's theorem for Hopf algebras, \textit{Proc.\ A.M.S.} \textbf{123} (1995), 327--333.
\bibitem{Ra}M.D.~Rafael, Separable functors revisited, \textit{Comm.\ Alg.} \textbf{18} (1990), 1445--1459. 
\bibitem{SY} A.~Skowronski and K.~Yamagata, \textit{Frobenius Algebras 1, Basic Representation Theory},  E.M.S. Textbook series, 2011.
\bibitem{S}S.~Skryabin, Projectivity and freeness over comodule algebras, \textit{Trans.\ A.M.S.}
\textbf{359} (2007),  2597--2623.
 \bibitem{U}K.-H. Ulbrich, On modules induced or coinduce from Hopf subalgebras, \textit{Math.\ Scand.} \textbf{67} (1990), 177--182.
 \bibitem{Y} S.~Yamanaka, Note on Morita equivalence in ring extensions, preprint (2014), 13 pp. 
\end{small}
\end{thebibliography}
\end{document}